\newcommand{\N}{\mathbb{N}}
\newcommand{\Z}{\mathbb{Z}}
\newcommand{\R}{\mathbb{R}}
\newcommand{\C}{\mathbb{C}}
\newcommand{\Manoa}{M\=anoa}
\newcommand{\Hawaii}{Hawai\kern.05em`\kern.05em\relax i}
\numberwithin{equation}{section}
\theoremstyle{plain}
\newtheorem{theorem}{Theorem}[section]
\newtheorem{lemma}[theorem]{Lemma}
\newtheorem{corollary}[theorem]{Corollary}
\newtheorem{proposition}[theorem]{Proposition}
\newtheorem{definition-theorem}[theorem]{Definition / Theorem}
\newtheorem*{conjecture*}{Conjecture}
\newtheorem*{theorem*}{Theorem}
\theoremstyle{definition}
\newtheorem{definition}[theorem]{Definition}
\newtheorem{example}[theorem]{Example}
\newtheorem{remark}[theorem]{Remark}
\newtheorem*{example*}{Example}  
\newtheorem*{remark*}{Remark}
\title{Cartan subalgebras in uniform Roe algebras}
\author{Stuart White\thanks{Research partially supported by EPSRC (EP R025061/1) and an Alexander Humboldt foundation fellowship.} \and Rufus Willett\thanks{Research partially supported by US NSF grants DMS 1401126 and DMS 1564281}}
\begin{document}

\maketitle

\begin{abstract}In this paper we study structural and uniqueness questions for Cartan subalgebras of uniform Roe algebras. We characterise when an inclusion $B\subseteq A$ of $\mathrm{C}^*$-algebras is isomorphic to the canonical inclusion of $\ell^\infty(X)$ inside a uniform Roe algebra $C^*_u(X)$ associated to a metric space of bounded geometry. We obtain uniqueness results for `Roe Cartans' inside uniform Roe algebras up to automorphism when $X$ coarsely embeds into Hilbert space, and up to inner automorphism when $X$ has property A.
\end{abstract}

\section{Introduction}

\renewcommand*{\thetheorem}{\Alph{theorem}}

The aim of this paper is to study Cartan subalgebras in uniform Roe algebras, and in particular to what extent the `standard' Cartan subalgebra is unique.  Roe algebras associated to metric spaces were introduced in \cite{Roe:1993lq} for their connections to (higher) index theory and the associated applications to manifold topology and geometry \cite{Roe:1996dn,Yu:200ve}.  The uniform variant of the Roe algebra has since been fairly extensively studied for its own sake, and provides an interesting bridge between coarse geometry and $C^*$-algebra theory.

It is natural to ask how much of the information about a metric space is remembered by, or can be recovered from, the associated uniform Roe algebra; this line of research was initiated by \v{S}pakula and the second author in \cite{Spakula:2011bs}.  Such \emph{rigidity} questions are strongly motivated by the coarse Baum-Connes conjecture and its variants \cite{Higson:1995fv,Yu:1995bv,Spakula:2009tg,Engel:2014tx}. Roughly speaking these conjectures predict that the analytic K-theory of the (uniform) Roe algebra provides a faithful model for the large scale algebraic topology of the underlying space, i.e.\ they postulate that on the level of $K$-theory, $C^*_u(X)$ retains all relevant information about $X$.  If these conjectures have a positive answer, one can apply powerful analytic tools (positivity and the spectral theorem) to the study of $X$, and thus deduce important consequences in topology and geometry.    This latter motivation has been made particularly stark by recent results of Braga and Farah \cite{Braga:2018dz}, who show that possible failure of rigidity is intimately tied to the existence of so-called ghost operators that are also known to cause problems for the coarse Baum-Connes conjecture (see \cite[Section 6]{Higson:2002la} and \cite[Sections 5-6]{Willett:2010ud}).

\smallskip

On the other hand, Cartan subalgebras have been present in the study of operator algebras since the foundational work of Murray and von Neumann. Indeed, the  prototypical example of a Cartan subalgebra arises from Murray and von Neumann's group measure space construction. Reminiscent of the semidirect product construction in group theory, given a group $G$ acting by non-singular transformations on a measure space $(X,\mu)$, one constructs a single von Neumann algebra $L^\infty(X,\mu)\rtimes G$ containing a copy of $L^\infty(X,\mu)$ and so that the induced action of $G$ on $L^\infty(X,\mu)$ is by inner automorphisms. Here $L^\infty(X,\mu)$ is a Cartan subalgebra of $L^\infty(X,\mu)\rtimes G$. 

Abstracting the properties of the inclusion $L^\infty(X,\mu)\subseteq L^\infty(X,\mu)\rtimes G$, Vershik defined the concept of a Cartan subalgebra \cite{Vershik:1971aa}, and this was extensively studied by Feldman and Moore \cite{Feldman:1977tx,Feldman:1977rd} who showed that these subalgebras correspond to (twisted) measured orbit equivalence relations.  Thus Cartan subalgebras provide an operator algebraic framework for the study of dynamical systems.  Moreover, a  major step in understanding the range of possible group actions giving rise to the same crossed product algebra involves classifying Cartan subalgebras.

Voisculescu famously showed free group factors have no Cartan subalgebras \cite{Voiculescu:1996ux}, while in the uniqueness direction, a celebrated theorem of Connes, Feldmann and Weiss shows that injective von Neumann algebras with separable predual have unique Cartan subalgebras up to automorphism \cite{Connes:1981aa}, i.e. if $A,B$ are Cartan subalgebras in an injective von Neumann algebra $M$, then there is an automorphism $\alpha$ of $M$ with $\alpha(A)=B$.  In the injective setting, it will rarely be the case that this automorphism can be taken to be inner (cf.\ \cite[Theorem 7]{Feldman:1977rd}), and a major breakthrough was made by Ozawa and Popa who gave the first example of a II$_1$ factor with a unique Cartan subalgebra up to inner automorphism in \cite{Ozawa:2010sz}. Subsequently Popa's deformation-rigidity theory has been used to produce a number of striking uniqueness and non-uniqueness results for Cartan subalgebras in von Neumann factors: see for example  \cite{Ozawa:2010el,Vaes:2010aa,Chifan:2013ji,Popa:2014jq,Popa:2014cu}.

Corresponding notions have been developed in the setting of $\mathrm{C}^*$-algebras. Building on Kumjian's much earlier notion of a $\mathrm{C}^*$-diagonal \cite{Kumjian:1986aa}, Renault defined a Cartan pair in \cite{Renault:2008if}, showing that any such pair is isomorphic to the inclusion $C_0(G^{(0)})\subseteq C^*_r(G,\Sigma)$ of the $C_0$-functions on the unit space $G^{(0)}$ of a twisted, \'{e}tale, topologically principal groupoid $(G,\Sigma)$ into the associated twisted groupoid $C^*$-algebra.  Such a decomposition is particularly useful in the nuclear case, as it implies that the universal coefficient theorem of Rosenberg and Schochet holds \cite{Barlak:2017aa}.  Recently, there has been growing interest in studying general existence and uniqueness questions for Cartan subalgebras in $C^*$-algebras \cite{Li:2017aa,Carlsen:2017aa}. In contrast to the von Neumann algebraic setting, even very elementary $\mathrm{C}^*$-algebras such as dimension drop algebras and UHF-algebras have multiple Cartan subalgebras \cite{Blackadar:1990aa,Barlak:2017ab}.  One key difference is that separable measure spaces are readily classified, while compact metrisable spaces are not. Indeed every non-atomic Cartan subalgebra in a von Neumann algebra with separable predual is abstractly isomorphic to $L^\infty([0,1])$. In the references above the spectrum is used to distinguish Cartan subalgebras.  So in the setting of $\mathrm{C}^*$-algebras one should really only attempt to classify Cartan subalgebras with a specified spectrum.

The key example relevant to this paper is the canonical Cartan subalgebra in a uniform Roe algebra. If $\Gamma$ is a countable group, the uniform Roe algebra $C^*_u(\Gamma)$ is the reduced group $\mathrm{C}^*$-algebra crossed product $\ell^\infty(\Gamma)\rtimes_r \Gamma$, where the action is by left translation.  Thus uniform Roe algebras have a mixed $C^*$-algebraic (from the reduced crossed product) and von Neumann algebraic (from $\ell^\infty$) identity, which suggests they are a good candidate for pushing uniqueness of Cartan results into the $C^*$-world. The subalgebra $\ell^\infty(\Gamma)$ provides a canonical Cartan subalgebra inside $C^*_u(\Gamma)$. More generally, when $X$ is a metric space of bounded geometry, $\ell^\infty(X)$ is a Cartan subalgebra of $C^*_u(X)$; this corresponds to the description of $C^*_u(X)$ as a groupoid $C^*$-algebra due to Skandalis, Tu, and Yu, \cite{Skandalis:2002ng}.

\smallskip

Our aim in this paper is to study the following questions.
\begin{itemize}
\itemsep-.1cm 
\item What form can general Cartan subalgebras in a uniform Roe algebra take? This could mean what isomorphism type as an abstract $C^*$-algebra, or it could mean the more refined spatial theory of how a Cartan subalgebra can be represented on $\ell^2(X)$.
\item When does an abstract Cartan pair $B\subseteq A$ come from a uniform Roe algebra?
\item To what extent is the canonical Cartan subalgebra in a uniform Roe algebra unique? Here uniqueness might mean up to automorphism or more strongly up to inner automorphism, and might refer to uniqueness among some class of Cartan subalgebras satisfying additional conditions.
\end{itemize}

We address the first question in Section \ref{gen cart sec}, where we work in the generality of $\mathrm{C}^*$-algebras between the compact and bounded operators on a Hilbert space.  Specialising our results to uniform Roe algebras we obtain the following proposition (which is a combination of the more general statements Theorem \ref{cart com the} and Proposition \ref{non sep}).

\begin{proposition}\label{IntroA}
Let $X$ be a countably infinite metric space of bounded geometry. Then any Cartan subalgebra $B\subseteq C^*_u(X)$ is non-separable and contains a complete family of orthonormal rank one projections for $\ell^2(X)$.
\end{proposition}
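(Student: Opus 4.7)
The plan is to reduce the statement to the two general results cited (Theorem \ref{cart com the} and Proposition \ref{non sep}) and to verify their hypotheses for the inclusion $\mathcal{K}(\ell^2(X)) \subseteq C^*_u(X) \subseteq B(\ell^2(X))$. Since $X$ is countably infinite, $\ell^2(X)$ is separable and infinite-dimensional. The containment $\mathcal{K}(\ell^2(X)) \subseteq C^*_u(X)$ holds because every matrix unit $|\delta_x\rangle\langle\delta_y|$ has finite propagation $d(x,y)$, hence lies in the $*$-algebra of finite-propagation operators whose norm-closure is $C^*_u(X)$, and these matrix units generate the compacts as a $C^*$-algebra.

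For the existence of a complete family of orthonormal rank-one projections (the content of Theorem \ref{cart com the}), my first step would be to show that for every projection $p \in B$, the corner $pBp$ is a MASA in $pAp$: given $x \in pAp$ commuting with $pBp$, each $b \in B$ decomposes as $b = pbp + (1-p)b(1-p)$ using $p, 1-p \in B$, so $x$ commutes with all of $B$ and the MASA property places $x$ in $B$, hence in $pBp$. Combining this with $pAp \supseteq \mathcal{K}(p\ell^2(X))$, any minimal projection $p \in B$ must have one-dimensional range: a scalar MASA inside a $C^*$-algebra containing $\mathcal{K}$ of a higher-dimensional space is impossible. So every atom of $B$ is rank-one. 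The main obstacle then is to rule out any ``continuous part'' of $B$---equivalently, to show that the atoms sum to $1$. I would tackle this by considering the supremum $q \in B''$ of all atoms of $B$ and arguing that if $1 - q \neq 0$, one can transfer the Cartan structure to the complementary corner (using approximations of $1-q$ by positive elements of $B$ together with the hereditary subalgebra trick) and reapply the atomic extraction argument there, producing a minimal projection below $1-q$ and contradicting the definition of $q$. This completeness step is where I expect the most subtlety.

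Once the complete orthogonal family $\{p_x\}_{x \in X}$ of rank-one projections in $B$ is in hand, Proposition \ref{non sep} supplies non-separability. Here my plan is to exploit the MASA property together with the normalizer axiom: for every subset $S \subseteq X$, the operator $T_S := \sum_{x \in S} p_x$ (SOT-convergent) commutes with every element of $B$ inside $B(\ell^2(X))$, so if one can show enough such $T_S$ actually lie in $C^*_u(X)$, then the MASA property forces them into $B$, producing uncountably many distinct projections in $B$ and defeating separability. Showing that sufficiently many of these $T_S$ belong to the uniform Roe algebra---presumably by using the normalizers of $B$ to approximate $T_S$ by elements of finite propagation in a controlled way---is the second main technical point to overcome.
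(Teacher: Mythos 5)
Your reduction to the two cited results is the right framing, and your verification that $C^*_u(X)$ contains the compacts and that minimal projections of a MASA must be rank one are both fine. But there are genuine gaps in both halves. For the first half, the missing idea is how to produce even one minimal projection in $B$: the MASA property alone gives nothing here (a diffuse MASA of $\mathcal{B}(H)$ has no atoms at all), and your plan never invokes the faithful conditional expectation $E:A\to B$, which is the engine of the paper's argument. The paper first proves (Lemma \ref{comp lem}) that $E$ maps compact operators to compact operators --- this requires a real argument, via the trace and families of orthogonal positive contractions in $B$ --- so that for a rank one projection $e$ the element $E(e)\in B$ is compact and, by faithfulness, non-zero; the spectral theorem then hands you a finite rank, hence minimal, hence rank one projection in $B$. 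Your proposed fix for the ``continuous part'' is circular without this: you plan to ``reapply the atomic extraction argument'' in the corner under $1-q$, but no atomic extraction argument has been given, and moreover $1-q$ need not lie in $B$, and $E(e)$ for $e\leq 1-q$ need not be supported under $1-q$, so the corner transfer does not obviously localise. The paper's completeness step instead shows directly that $p:=\sum\{\text{rank one projections in }B\}$ commutes with $\mathcal{N}_A(B)$ (if $pa(1-p)\neq 0$ for a normaliser $a$, one manufactures a rank one positive element of $B$ orthogonal to $p$, a contradiction), and then uses that $\mathcal{N}_A(B)$ generates $A$ and $A\supseteq\mathcal{K}$ acts irreducibly to force $p=1$. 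This use of the normaliser axiom is essential and absent from your sketch.

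For non-separability, your strategy is to exhibit uncountably many projections $T_S=\sum_{x\in S}p_x$ inside $C^*_u(X)$ (each such is automatically in $B$ by maximality, and distinct projections in an abelian $C^*$-algebra are at distance $1$, so uncountably many of them would indeed defeat separability). The problem is that ``enough such $T_S$ lie in $C^*_u(X)$'' is precisely the hard content, is a strictly stronger assertion than non-separability, and you give no mechanism for it; for a general Cartan subalgebra there is no evident way to approximate $T_S$ by finite propagation operators. The paper argues quite differently: if $B$ were separable its spectrum $\widehat{B}$ would be metrisable, so some boundary point would be a sequential limit from the dense copy of $\mathbb{N}$; Proposition \ref{non sep} rules this out by an interleaving/diagonal construction that builds a single operator $e=\sum_k e_k$ out of the \emph{other} complete family of projections coming from $\ell^\infty(X)\subseteq C^*_u(X)$, and evaluates the function $E(e)\in B$ along two subsequences converging to the same boundary point to get values $\geq 3/4$ and $\leq 1/2$ simultaneously. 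Both the second diagonal $\ell^\infty(X)$ and the conditional expectation are indispensable inputs that your sketch does not use, and without something playing their role the non-separability claim is not established.
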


While Cartan subalgebras in uniform Roe-algebras must be non-separable, they do not have to be abstractly isomorphic to $\ell^\infty$, and even relatively straightforward metric spaces admit Cartan subalgebras with exotic spectra. This is the subject of Section \ref{S3}.

Thus, and as expected in the $\mathrm{C}^*$-setting, we must impose additional structure such as spectral data in order to recognise the canonical Cartan subalgebra amongst all possible Cartan subalgebras of a uniform Roe algebra. We explore this in Section \ref{abs cart sec}, abstracting the following key features of the inclusion $\ell^\infty(X)\subseteq C^*_u(X)$ into the concept of a \emph{Roe Cartan pair} (see Definition \ref{roe cartan def}): 
\begin{itemize}
\itemsep-.1cm 
\item containment of the compacts as an essential ideal;
\item the Cartan subalgebra is abstractly isomorphic to $\ell^\infty(\mathbb N)$;
\item countable generation of the containing algebra over the subalgebra (``co-separability'').
\end{itemize}
Such Cartan pairs can only arise from canonical Cartan subalgebras in uniform Roe algebras. 

\begin{theorem}\label{cart abs}
Let $B\subseteq A$ be a Roe Cartan pair.   Then there exists a bounded geometry metric space $Y$ such that for any irreducible and faithful representation of $A$ on a Hilbert space $H$ there is a unitary isomorphism $v:\ell^2(Y)\to H$ such that 
$$
v^*Bv=\ell^\infty(Y) \quad \text{and}\quad v^*Av=C^*_u(Y).
$$
\end{theorem}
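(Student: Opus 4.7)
The plan is to invoke the three Roe Cartan axioms in turn. \textbf{Diagonalizing $B$.} Fix a faithful irreducible representation of $A$ on $H$. Essentiality of $\mathcal{K}(H)$ as an ideal in $A$ forces the restriction of this representation to $\mathcal{K}(H)$ to be the standard one, so up to a unitary we have $\mathcal{K}(H) \subseteq A \subseteq B(H)$. Proposition \ref{IntroA} (or rather, the general theorems of Section \ref{gen cart sec} that it packages) then supplies a complete family of pairwise-orthogonal rank-one projections $\{p_y\}_{y \in Y} \subseteq B$; since $B \cong \ell^\infty(\mathbb{N})$ has only countably many atoms, $Y$ is countable. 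Picking a unit vector in each $p_y H$ gives a unitary $v \colon \ell^2(Y) \to H$ sending every $p_y$ to the diagonal projection $\delta_y$; hence $v^*Bv$ is an abelian $C^*$-subalgebra of $\ell^\infty(Y)$ containing every atom and abstractly isomorphic to $\ell^\infty(\mathbb{N})$, so $v^*Bv = \ell^\infty(Y)$.

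\textbf{Building the metric.} Since $(B,A)$ is Cartan, the normalising partial isometries of $A$ have dense linear span; via $v$, each is an $\ell^\infty(Y)$-phase times the partial-translation operator of a partial bijection of $Y$. Using co-separability, fix countably many normalising partial isometries $\{u_n\}_{n \ge 1}$ whose $B$-linear combinations are dense in $A$, and write $\phi_n$ for the induced partial bijection of $Y$. Define
\[
d(y,y') = \min\{r \ge 0 \colon (y,y') \in \mathrm{supp}(w) \text{ for some word } w \text{ of length} \le r \text{ in } \phi_1^{\pm 1},\dots,\phi_r^{\pm 1}\},
\]
with $d(y,y') = \infty$ if no such word exists. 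Symmetry, the triangle inequality, and positive-definiteness (only the empty word has support in the diagonal) are routine; finiteness of $d$ follows because $\mathcal{K}(H) \subseteq A$ puts every matrix unit $e_{y,y'}$ in the norm closure of finite $B$-sums of words, so at least one word must have $(y,y')$ in its support. Bounded geometry is automatic because each $\phi_n^{\pm 1}$ sends any given point to at most one other, yielding $|B(y,r)| \le (2r+1)^r$.

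\textbf{Identifying $v^*Av = C^*_u(Y)$.} Each $u_n$ has propagation $\le n$, so $v^*Av \subseteq C^*_u(Y)$. For the reverse inclusion it suffices to place every partial-translation operator $V_\psi$, for $\psi$ a partial bijection of propagation $\le r$, inside $v^*Av$. By bounded geometry the $r$-neighbourhood of the diagonal in $Y \times Y$ is a finite disjoint union of partial bijections, and the set of length-$\le r$ words in $\phi_1^{\pm 1},\dots,\phi_r^{\pm 1}$ is finite; by definition of $d$, each pair in $\psi$ is realised by some such word. Partitioning $\psi$ accordingly gives
\[
V_\psi = \sum_j f_j\, q_j\, w_j,
\]
where $w_j$ is a word in the $u_n^{\pm 1}$, $q_j \in \ell^\infty(Y)$ is the projection onto the piece of $\psi$ that $w_j$ realises, and $f_j \in \ell^\infty(Y)$ cancels the $\mathbb{T}$-valued phase introduced by $w_j$. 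All three factors lie in $v^*Av$, so $V_\psi$ does too, and density of $\ell^\infty(Y)$-combinations of such $V_\psi$'s in $C^*_u(Y)$ completes the proof.

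The main technical obstacle is the decomposition in the third step: one must verify that the partition of $\psi$ by generator-words really assembles to $V_\psi$ after phase and projection corrections, with all corrections genuinely lying in $\ell^\infty(Y)$. Co-separability makes the word alphabet at each scale finite, bounded geometry bounds the number of disjoint bijections to be combined, and essentiality of $\mathcal{K}(H)$ supplies the matrix units needed to bootstrap finiteness of $d$ — so each of the three Roe Cartan axioms enters exactly once.
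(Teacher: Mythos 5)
Your argument is correct, and at bottom it traverses the same mathematical territory as ours, but it is packaged differently in a way worth comparing. We factor the proof through abstract coarse structures: from $B\subseteq A$ we build the coarse structure $\mathcal{E}_A$ generated by the sets $E_{a,\epsilon}$ (Definition \ref{cart coarse def}), establish the correspondence of Theorem \ref{cart coarse prop}, and then obtain a metric by showing that co-separability forces $\mathcal{E}_A$ to be countably generated and invoking Roe's metrisability theorem (Theorem \ref{met the} via Lemma \ref{more props}). You instead inline the metrisability step, constructing a word metric directly from the partial bijections attached to a countable co-separating family of normalising partial isometries; the coarse structure your $d$ induces is exactly $\mathcal{E}_A$, and your third step re-proves the content of Lemma \ref{u roe struc} and Proposition \ref{equals roe} (the decomposition of an entourage into partial bijections is Lemma \ref{greedy}, and the phase corrections $f_j$ land in $\ell^\infty(Y)$ because words in the $u_n^{\pm}$ have unimodular matrix entries on their supports). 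What our packaging buys is a representation-independent correspondence between Cartan triples and coarse spaces that we reuse later; what yours buys is a self-contained, more concrete construction of the metric with explicit bounded-geometry estimates.

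One step you gloss over deserves a sentence. A normaliser $a$ of $B$ is supported on a single diagonal by Lemma \ref{linf normal}, but it is a bounded \emph{function} times a partial translation rather than a phase times one, and that partial translation does not obviously lie in $A$. To extract your countable family $\{u_n\}$ of genuine partial isometries in $\mathcal{N}_A(B)$ you must truncate the matrix entries of $a$ below $\epsilon$ and multiply by the bounded function of inverse entries; this function lies in $B$ precisely because your first step has already identified $v^*Bv$ with all of $\ell^\infty(Y)$ (this is our Lemma \ref{new cor}). The ordering of your steps makes the repair immediate, so this is an elision rather than a gap, but the claim as literally written --- that every normaliser is an $\ell^\infty(Y)$-phase times a partial translation --- only holds after this approximation.
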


When the algebra $A$ above is already a uniform Roe algebra associated to a metric space $X$, then it is natural to ask how $X$ and the space $Y$ produced by the previous theorem are related.  Using the very recent preprint \cite{Braga:2019wv}, we have the following corollary. 

\begin{corollary}\label{IntroC}
Let $X$ be a bounded geometry metric space that coarsely embeds into a Hilbert space.  If $B\subseteq C^*_u(X)$ is a Roe Cartan pair, then the bounded geometry metric space associated to this pair by Theorem \ref{cart abs} is coarsely equivalent to $X$.
\end{corollary}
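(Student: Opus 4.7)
The plan is to reduce to the rigidity theorem of Braga and Farah from \cite{Braga:2019wv} by using Theorem \ref{cart abs} to identify the uniform Roe algebras $C^*_u(X)$ and $C^*_u(Y)$, where $Y$ is the space provided by that theorem.

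First, I would verify that the defining action of $C^*_u(X)$ on $\ell^2(X)$ is an irreducible and faithful representation: faithfulness is by definition, and irreducibility follows from the fact that $C^*_u(X)$ contains $\mathcal{K}(\ell^2(X))$, which already acts irreducibly. Applying Theorem \ref{cart abs} to the Roe Cartan pair $B \subseteq C^*_u(X)$ with this choice of representation produces a bounded geometry metric space $Y$ and a unitary $v : \ell^2(Y) \to \ell^2(X)$ satisfying $v^* C^*_u(X) v = C^*_u(Y)$ (and $v^* B v = \ell^\infty(Y)$, although this refinement is not needed here). In particular, conjugation by $v$ is a spatially implemented $*$-isomorphism $C^*_u(Y) \cong C^*_u(X)$. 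At this point I would invoke \cite{Braga:2019wv}, whose rigidity theorem asserts that, for bounded geometry metric spaces coarsely embedding into Hilbert space, a $*$-isomorphism of uniform Roe algebras forces a coarse equivalence of the underlying spaces. Feeding the isomorphism $C^*_u(X) \cong C^*_u(Y)$ into this theorem gives the desired coarse equivalence $X \sim Y$.

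The main delicate point is ensuring the precise hypotheses of the Braga--Farah theorem are met. Depending on the exact formulation, their result may require that both spaces coarsely embed into Hilbert space, whereas we have this hypothesis on $X$ alone. This is a minor issue: coarse embeddability of $Y$ into Hilbert space transfers from $X$ through the isomorphism $C^*_u(X) \cong C^*_u(Y)$, since for bounded geometry spaces this property is detected internally by the uniform Roe algebra (for example via the existence of suitable completely positive approximations of the identity). Beyond this bookkeeping, all the substantive content lives in \cite{Braga:2019wv}; the purpose of the corollary is simply to package a Roe Cartan pair inside $C^*_u(X)$ into exactly the data consumed by their rigidity theorem.
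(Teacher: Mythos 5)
Your argument is the same as the paper's: apply Theorem \ref{cart abs} in the defining representation to get $Y$ and a unitary implementing $C^*_u(X)\cong C^*_u(Y)$, then feed this isomorphism into the Braga--Farah--Vignati rigidity theorem. That is exactly how the paper proceeds, and the reduction is correct.

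However, the way you dispose of the ``delicate point'' is wrong, and you are fortunate that the step is not actually needed. You claim that coarse embeddability of $Y$ into Hilbert space transfers through the isomorphism $C^*_u(X)\cong C^*_u(Y)$ because this property ``is detected internally by the uniform Roe algebra (for example via the existence of suitable completely positive approximations of the identity).'' Completely positive approximations of the identity characterise nuclearity of $C^*_u(Y)$, which corresponds to property A of $Y$, a strictly stronger property than coarse embeddability into Hilbert space. Whether coarse embeddability into Hilbert space of $X$ can be characterised by an isomorphism invariant of $C^*_u(X)$ is, as of the writing of the paper, an open problem; this is precisely the point made in Remark \ref{bf ce rem}, and is why the earlier rigidity result of Braga and Farah (\cite[Corollary 1.2]{Braga:2018dz}), which needs the hypothesis on both spaces, would \emph{not} suffice here. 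The proof goes through only because \cite[Corollary 1.5]{Braga:2019wv} is a genuine \emph{superrigidity} statement: it requires coarse embeddability of one of the two spaces only. So the correct resolution of your worry is to check the one-sided form of the cited theorem, not to transfer the property to $Y$ --- that transfer is exactly what one does not know how to do.
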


The hypotheses of the above theorem apply broadly.  Every exact group (in the sense of Kirchberg and Wassermann \cite{Kirchberg:1999ss}) coarsely embeds into a Hilbert space by \cite[Theorem 2.2]{Yu:200ve}. Exactness of a group $\Gamma$ is characterised coarsely through Yu's property A for the associated metric space $X$, which is also equivalent to nuclearity of $C^*_u(X)$ (see \cite{Ozawa:2000th}).  The class of exact groups is very large, including for example all linear groups \cite{Guentner:2005xr}, all groups with finite asymptotic dimension \cite{Higson:2000dp}, and all amenable groups; see \cite{Willett:2009rt} for a survey.  In fact by \cite[Theorem 2]{Osajda:2014ys}, the class of groups which coarsely embed into a Hilbert space is strictly larger than the exact groups, and the only groups that are known not to coarsely embed into Hilbert space are the so-called \emph{Gromov monster} groups whose Cayley graphs contain expanders \cite{Gromov:2003gf,Arzhantseva:2008bv,Osajda:2014ys}.

\smallskip
We now turn to uniqueness results for Cartan subalgebras of Roe algebras. Any Cartan subalgebra of a uniform Roe algebra that is conjugate by an automorphism to the canonical Cartan must be a Roe Cartan, so we only ask for uniqueness for Roe Cartans. Using results of Whyte \cite{Whyte:1999uq}, we can obtain uniqueness up to automorphism whenever the space $X$ coarsely embeds into Hilbert space and is non-amenable in the sense of Block and Weinberger \cite{Block:1992qp} (when $X$ is the metric space associated to a finitely generated group, non-amenability is precisely failure of amenability of the group in the usual sense \cite[Chapter 3]{Roe:2003rw}).  In particular, the following corollary (proved in Section \ref{inn sec}) applies to examples like non-abelian free groups, non-elementary word hyperbolic groups, and lattices in higher rank semi-simple Lie groups.

\begin{corollary}\label{IntroD}
Let $X$ be a bounded geometry metric space that coarsely embeds into a Hilbert space, and is non-amenable.  Let $B\subseteq C^*_u(X)$ be a Roe Cartan subalgebra.   Then there is a $^*$-automorphism $\alpha$ of $C^*_u(X)$ such that $\alpha(\ell^\infty(X))=B$.
\end{corollary}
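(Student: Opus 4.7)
The plan is to combine Theorem \ref{cart abs}, Corollary \ref{IntroC}, and a theorem of Whyte into a short package. First I would realise $C^*_u(X)$ in its canonical representation on $\ell^2(X)$; since this algebra contains $\mathcal{K}(\ell^2(X))$, the representation is irreducible and faithful, so Theorem \ref{cart abs} applies to the Roe Cartan pair $B \subseteq C^*_u(X)$. This produces a bounded geometry metric space $Y$ and a unitary $v : \ell^2(Y) \to \ell^2(X)$ satisfying $v\ell^\infty(Y)v^* = B$ and $vC^*_u(Y)v^* = C^*_u(X)$.

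Next I would invoke Corollary \ref{IntroC}: since $X$ coarsely embeds into a Hilbert space, the space $Y$ produced above must be coarsely equivalent to $X$. Non-amenability being a coarse invariant of bounded geometry spaces, $Y$ inherits non-amenability from $X$.

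The heart of the argument is then to upgrade this coarse equivalence to a genuine bijection. For this I would call on Whyte's theorem from \cite{Whyte:1999uq}: any coarse equivalence between non-amenable bounded geometry metric spaces is a bounded distance from a bijective coarse equivalence $f : X \to Y$. Such an $f$ gives a permutation unitary $u_f : \ell^2(X) \to \ell^2(Y)$ via $\delta_x \mapsto \delta_{f(x)}$, which visibly conjugates $\ell^\infty(X)$ onto $\ell^\infty(Y)$, while the controlled nature of $f$ and $f^{-1}$ translates into $u_f C^*_u(X) u_f^* = C^*_u(Y)$. Setting $w := v u_f \in U(\ell^2(X))$, the inner automorphism $\alpha := \mathrm{Ad}(w)$ of $B(\ell^2(X))$ then restricts to a $^*$-automorphism of $C^*_u(X)$ satisfying
\[
\alpha(\ell^\infty(X)) = v u_f \ell^\infty(X) u_f^* v^* = v \ell^\infty(Y) v^* = B,
\]
as required.

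All the substance is imported from the cited results, and this final assembly is purely formal. The only place the non-amenability hypothesis on $X$ is used is in Whyte's theorem, which is precisely what is needed to replace an abstract coarse equivalence by an honest bijection; without it one would obtain only a partial isometry rather than the unitary required to implement the spatial identification.
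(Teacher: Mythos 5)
Your argument is correct and follows essentially the same route as the paper: apply Theorem \ref{cart abs} to get $Y$ and the unitary $v$, use \cite[Corollary 1.5]{Braga:2019wv} (as in Corollary \ref{IntroC}) to see $X$ and $Y$ are coarsely equivalent, upgrade to a bijective coarse equivalence via Whyte's theorem using non-amenability, and conjugate by the resulting permutation unitary composed with $v$. The paper writes the composite unitary directly as $\delta_x\mapsto \xi_{f(x)}$ with $\xi_y$ a unit vector in the range of the minimal projection of $B$ indexed by $y$, which is exactly your $vu_f$.
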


Finally we turn to the strong form of uniqueness up to inner automorphism.   Theorem \ref{c*-alg the} is the central result of the paper. It uses both Proposition \ref{IntroA} and Theorem \ref{cart abs} above as ingredients in its proof. Other key ingredients include the rigidity results from \cite{Spakula:2011bs}, a recent criterion for detecting when an operator lies in a uniform Roe algebra under the hypothesis of property A due to \v{S}pakula and Zhang \cite{Spakula:2018aa}  (which builds on work of \v{S}pakula and Tikuisis \cite{Spakula:2017aa}), the operator norm localisation property of \cite{Chen:2008so}, and results of Braga and Farah \cite{Braga:2018dz}.  

\begin{theorem}\label{c*-alg the}
Let $X$ be a bounded geometry metric space with property A.  Let $B\subseteq C^*_u(X)$ be a Roe Cartan subalgebra.  Then there is a unitary operator $u\in C^*_u(X)$ such that $uBu^*=\ell^\infty(X)$.
\end{theorem}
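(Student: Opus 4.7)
The plan is to combine the abstract classification of Roe Cartan pairs from Theorem \ref{cart abs} with the spatial rigidity theory for uniform Roe algebras that is available under property A. First, apply Theorem \ref{cart abs} to $B \subseteq C^*_u(X)$, using the canonical representation of $C^*_u(X)$ on $\ell^2(X)$ (which is faithful and irreducible, since $C^*_u(X)$ contains the compacts as an essential ideal). This produces a bounded geometry metric space $Y$ and a unitary $v \colon \ell^2(Y) \to \ell^2(X)$ with $v\ell^\infty(Y)v^* = B$ and $vC^*_u(Y)v^* = C^*_u(X)$. Because property A for $X$ implies that $X$ coarsely embeds into Hilbert space, Corollary \ref{IntroC} ensures that $Y$ is coarsely equivalent to $X$, and in particular $Y$ also has property A.

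Writing $\xi_y = ve_y$ gives an orthonormal basis of $\ell^2(X)$, indexed by $Y$, whose associated rank-one projections $p_y = |\xi_y\rangle\langle \xi_y|$ are the minimal projections of $B$; that these are rank-one operators on $\ell^2(X)$ is consistent with Proposition \ref{IntroA}. The next step is to extract an approximate matching $f \colon Y \to X$ by choosing $f(y)$ so that $|\langle \xi_y, e_{f(y)}\rangle|$ is bounded below by a uniform constant, and to show both that $f$ is a coarse equivalence and that each $\xi_y$ is quantitatively concentrated near $e_{f(y)}$. The source of this localisation is that $\mathrm{Ad}(v)$ intertwines $C^*_u(Y)$ with $C^*_u(X)$: finite-propagation normalisers of $\ell^\infty(Y)$ in $C^*_u(Y)$ are carried to normalisers of $B$ in $C^*_u(X)$, and the \v{S}pakula--Zhang criterion \cite{Spakula:2018aa} together with the operator norm localisation property \cite{Chen:2008so} should convert this algebraic data into the required propagation bounds on the vectors $\xi_y$.

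With this spatial picture in place, I would invoke the rigidity framework of \v{S}pakula--Willett \cite{Spakula:2011bs} and Braga--Farah \cite{Braga:2018dz}: in the property A setting, an approximate coarse equivalence between two uniform Roe algebras that matches the canonical Cartans can be promoted to an inner spatial equivalence. Applied here, this should yield a unitary $w \in C^*_u(X)$ that implements $f$ after identifying $Y$ with $X$; composing $w$ appropriately with $v$ produces the sought unitary $u \in C^*_u(X)$ satisfying $uBu^* = \ell^\infty(X)$, with membership in $C^*_u(X)$ ensured because each constituent piece lies in $C^*_u(X)$.

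The principal obstacle is the middle step: converting the algebraic identity $vC^*_u(Y)v^* = C^*_u(X)$ into the quantitative spatial statement that $\xi_y$ sits near $e_{f(y)}$ uniformly in $y$. Without property A, such localisation can fail through ghost-type behaviour of $v$, as observed in \cite{Braga:2018dz}; property A enters precisely via the \v{S}pakula--Zhang criterion and operator norm localisation, which convert approximate finite propagation and operator-norm approximation into strong pointwise spatial control. This spatial control is the technical heart of the argument, and once secured, the rigidity machinery of \cite{Spakula:2011bs, Braga:2018dz} packages it into the desired inner-implementing unitary.
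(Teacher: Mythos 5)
Your overall strategy matches the paper's: apply Theorem \ref{cart abs} to get $Y$ and a unitary $v:\ell^2(Y)\to\ell^2(X)$ conjugating $\ell^\infty(Y)\subseteq C^*_u(Y)$ onto $B\subseteq C^*_u(X)$, localise the vectors $\xi_y=v\delta_y$ near points of $X$ using operator norm localisation, and finish with the \v{S}pakula--Zhang quasi-locality criterion. However, there are two genuine gaps in the middle and final steps.

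First, choosing $f(y)$ to maximise $|\langle \xi_y,\delta_{f(y)}\rangle|$ gives a map $Y\to X$ that has no reason to be injective, and you need a genuine \emph{bijection} $h$ between $X$ and $Y$ in order to define a unitary $w$ with $w^*\ell^\infty(Y)w=\ell^\infty(X)$. The paper's route to this is the technical heart of the argument and is absent from your sketch: one must prove the \emph{set-level} concentration estimate $\|vq_Dv^*(1-p_{X_{D,\delta}})\|<\epsilon$ uniformly over \emph{all} subsets $D\subseteq Y$ (Lemma \ref{soup up}), which forces $|X_{D,\delta}|\geq |D|$ for finite $D$ and hence, via Hall's marriage theorem, an injection $Y\to X$; symmetrising and applying K\"onig's proof of Cantor--Schr\"oder--Bernstein then yields the bijection. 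Getting that set-level estimate with the right order of quantifiers is exactly where the Braga--Farah uniform approximability lemma (\cite[Lemma 4.9]{Braga:2018dz}, Corollary \ref{spti}) enters: it makes the propagation bound on the approximants to $vq_Dv^*$ independent of $D$, so that a single ONL radius works for all $D$ simultaneously. Your sketch assigns the localisation of the $\xi_y$ to the \v{S}pakula--Zhang criterion, but that criterion plays no role there; it is used only at the very end, to show that the specific unitary $u=w^*v^*$ lies in $C^*_u(X)$ once one has verified $\|\chi_C u\chi_D\|<\epsilon$ whenever $d(C,D)$ is large.

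Second, the concluding appeal to a ``rigidity framework'' in \cite{Spakula:2011bs,Braga:2018dz} that promotes an approximate Cartan-matching coarse equivalence to an inner spatial equivalence is not a result available in those references; that promotion is essentially the content of the theorem you are proving, so invoking it is circular. The actual closing argument must be carried out by hand: show that the bijection $h$ satisfies $X_{h(x),\delta}\subseteq B(x;r)$ uniformly (Corollary \ref{csb cor}), deduce the quasi-locality estimate for $u=w^*v^*$ from the set-level concentration, and only then apply Theorem \ref{st the} to conclude $u\in C^*_u(X)$.
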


Theorem \ref{c*-alg the} gives a stronger conclusion than Corollary \ref{IntroD} in the property A case.  However, as noted above there are examples where Corollary \ref{IntroD} applies and Theorem \ref{c*-alg the} does not: see for example \cite{Osajda:2014ys}.

It seems plausible to us that Theorem \ref{c*-alg the} will fail without some assumption on $X$, due to the well-known exotic analytic properties of uniform Roe algebras outside of the property A setting; see for example \cite{Sako:2012kx} and \cite{Roe:2013rt}. We would be very interested in any progress towards the construction of exotic examples, or in showing that they cannot exist.

\medskip

\noindent\textbf{Acknowledgements.}  We would like to thank Bruno Braga, Sel\c{c}uk Barlak, Ilijas Farah, Kang Li, J\'an \v{S}pakula, and Aaron Tikuisis for their helpful conversations, insights and comments on earlier versions of this paper. Part of the underlying research for this paper was undertaken during the conference Noncommutative Dimension Theories at the University of \Hawaii~at \Manoa~in 2015, and the IRP on Operator Algebras: Dynamics and Interactions at the CRM in Barcelona in 2017.  We would like to thank the (other) organisers of those meetings, and the funding bodies for supporting the meetings.

We would also like to thank the referees for their helpful reports.
\numberwithin{theorem}{section}

\section{Cartan subalgebras of $\mathrm{C}^*$-algebras containing the compacts}\label{gen cart sec}

Our aim in this section is to prove some general structural results about Cartan subalgebras in $C^*$-algebras that contain the compact operators.  We begin by recalling the definition of a Cartan subalgebra from \cite{Renault:2008if}.

\begin{definition}\label{cart def}
Let $A$ be a $C^*$-algebra.  A \emph{Cartan subalgebra} of $A$ is a $C^*$-subalgebra $B\subseteq A$ such that:
\begin{enumerate}[(i)]
\itemsep-.1cm 
\item $B$ is a maximal abelian self-adjoint subalgebra (MASA) of $A$;
\item $B$ contains an approximate unit\footnote{We will mainly be interested in the case that $A$ is unital, in which case condition (ii) is automatic: indeed condition (i) implies that $B$ contains the unit of $A$.}  for $A$ 
\item the \emph{normaliser} of $B$ in $A$, defined as
$$
\mathcal{N}_A(B):=\{a\in A\mid aBa^*\cup a^*Ba\subseteq B\}
$$
generates $A$ as a $C^*$-algebra;
\item there is a faithful conditional expectation $E:A\to B$.
\end{enumerate}
A \emph{Cartan pair} is a nested pair $B\subseteq A$ of $C^*$-algebras such that $B$ is a Cartan subalgebra of $A$.
\end{definition}

For later purposes we make the following definition.
\begin{definition}\label{cosepdef}
We say that a Cartan subalgebra $B$ of $A$ is \emph{co-separable} if there is a countable subset $S$ of $A$ (or equivalently of $\mathcal N_A(B)$) such that $A=C^*(S,B)$.\end{definition}

We need the following routine fact.
\begin{lemma}\label{new lemma}
Let $A\subseteq\mathcal B(H)$ be a concrete $C^*$-algebra containing the compact operators on $H$, and let $B\subseteq A$ be a maximal abelian subalgebra.  Then any minimal projection in $B$ is rank one.
\end{lemma}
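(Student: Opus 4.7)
The plan is to leverage the minimality hypothesis $pBp=\mathbb{C}p$ — the standard $C^*$-algebraic notion of minimal projection — together with commutativity of $B$ to force $B$ to act by scalars on the range of $p$. Indeed, since $B$ is abelian, $bp=pb$ for every $b\in B$, and so $bp=bp\cdot p=pbp\in\mathbb{C}p$; thus there is a character $\lambda\colon B\to\mathbb{C}$ with $bp=pb=\lambda(b)p$. Applied to any $\xi\in pH$ this gives $b\xi=\lambda(b)\xi$, so every vector in $pH$ is a joint eigenvector for $B$.

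I would then fix a unit vector $\xi\in pH$ and form the rank one projection $e=|\xi\rangle\langle\xi|$. Since $e$ is compact and $\mathcal{K}(H)\subseteq A$, we have $e\in A$; and $\xi\in pH$ gives $pe=ep=e$, so $e\le p$. The joint-eigenvector property from the previous paragraph is exactly what is needed to verify $be=\lambda(b)e=eb$ for each $b\in B$, so $e$ commutes with $B$. Maximality of $B$ as an abelian subalgebra of $A$ then places $e$ itself in $B$.

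To conclude, $e$ is a non-zero projection in $B$ dominated by $p$, hence $e=pep\in pBp=\mathbb{C}p$; the only non-zero scalar multiple of $p$ that is itself a projection is $p$, so $e=p$. Since $e$ has rank one by construction, so does $p$.

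I do not anticipate a substantive obstacle here, as the whole argument is a short unwinding of definitions once one has identified the right angle of attack. The one delicate point is the meaning of \emph{minimal projection}: the proof above uses the sense $pBp=\mathbb{C}p$ (equivalently, $p$ supports a character of $B$). Under the strictly weaker, order-theoretic reading — no projection of $B$ lies strictly between $0$ and $p$ — the conclusion would fail, as witnessed for example by $B=C([0,1])$, maximal abelian inside the $C^*$-algebra $A=C^*(C([0,1]),\mathcal{K}(L^2([0,1])))\subseteq\mathcal{B}(L^2([0,1]))$, where the unit of $B$ is ``minimal'' in the order sense yet has infinite rank.
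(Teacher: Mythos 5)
Your argument is correct and is essentially the paper's own proof run forwards rather than by contradiction: the paper picks a rank-one projection $q\leq p$ with $q\neq p$, notes that $q$ commutes with $B=pB\oplus(1-p)B$ (a step which, exactly as in your version, rests on $pBp=\mathbb{C}p$), and then invokes maximality to place $q$ in $B$ and contradict minimality. Your closing caveat about the two readings of ``minimal projection'' is apt --- the paper's one-line proof likewise only goes through in the $pBp=\mathbb{C}p$ sense, which is the sense both arguments use.
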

\begin{proof}
If $p\in B$ is minimal and not rank one, then there exists a rank one projection $q\in A$ with $q\leq p$.  However, $q$ commutes with $B=pB\oplus (1-p)B$, a contradiction.
\end{proof}

\begin{lemma}\label{comp lem}
Let $A\subseteq \mathcal{B}(H)$ be a concrete $C^*$-algebra containing the compact operators on $H$.  Let $B\subseteq A$ be a maximal abelian subalgebra, equipped with a conditional expectation $E:A\to B$.  Then for any compact operator $a\in A$, $E(a)$ is also compact.  
\end{lemma}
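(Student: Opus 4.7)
The plan is to reduce to rank-one projections and then combine Urysohn's lemma with a Bessel-type inequality. Using linearity and norm-continuity of $E$, together with the spectral decomposition of a self-adjoint compact operator, it suffices to show $E(q) \in \mathcal{K}(H)$ for every rank-one projection $q = |\xi\rangle\langle\xi|$ with $\|\xi\|=1$. Set $b := E(q) \in B$; identifying $B$ with $C(X)$ for $X$ its Gelfand spectrum, $b$ is a non-negative continuous function on the compact Hausdorff space $X$, with $\|b\|\le 1$.

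The key step is to show that for every $\epsilon > 0$ the closed level set $K_\epsilon := \{\lambda \in X : b(\lambda) \ge \epsilon\}$ is finite. If not, compactness and Hausdorffness of $X$ allow one to produce pairwise disjoint open sets $U_n\subseteq X$, each meeting $K_\epsilon$ at some point $\lambda_n$, and Urysohn's lemma then yields $c_n \in C(X) = B$ with $c_n(\lambda_n)=1$, $\mathrm{supp}(c_n)\subseteq U_n$, and $0 \le c_n \le 1$. These $c_n$ are mutually orthogonal ($c_m c_n = 0$ for $m\ne n$) and satisfy $\sum_n c_n^2 \le 1$. The $B$-bilinearity of $E$ gives $c_n^2 b = c_n E(q) c_n = E(c_n q c_n) = E(|c_n\xi\rangle\langle c_n\xi|)$, so contractivity of $E$ yields $\|c_n^2 b\| \le \|c_n\xi\|^2$; on the other hand, $(c_n^2 b)(\lambda_n) = b(\lambda_n) \ge \epsilon$, so $\|c_n^2 b\| \ge \epsilon$. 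Summing and using Bessel's inequality,
\[
\infty = \sum_n \epsilon \le \sum_n \|c_n\xi\|^2 = \Big\langle \Big(\sum_n c_n^2\Big) \xi, \xi\Big\rangle \le \|\xi\|^2 = 1,
\]
a contradiction.

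Once every $K_\epsilon$ is finite, any $\lambda \in X$ with $b(\lambda) > 0$ lies in the open, finite set $\{b > b(\lambda)/2\}$, and Hausdorffness forces $\{\lambda\}$ to be open in $X$. Thus $\chi_{\{\lambda\}}$ is a minimal projection in $B$, which by Lemma \ref{new lemma} is rank-one as an operator on $H$. Writing $\{b > 0\} = \{\lambda_k\}_{k \in \mathbb{N}}$, one has $b = \sum_k b(\lambda_k) \chi_{\{\lambda_k\}}$, a norm-convergent sum (since $b(\lambda_k) \to 0$) of scalar multiples of mutually orthogonal rank-one projections, so $b$ is compact.

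The subtle point to handle carefully is that the spectral projections $\chi_{[\epsilon,\infty)}(b)$ need not belong to $B$ when $b$ has non-discrete spectrum, so one cannot directly run the Bessel estimate on orthogonal sub-projections of $\chi_{[\epsilon,\infty)}(b)$. Using continuous bump functions in $B$ supplied by Urysohn's lemma is the key trick that sidesteps this issue and makes the finiteness argument work.
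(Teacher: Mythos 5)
Your proof is correct, and while it rests on the same underlying mechanism as the paper's --- testing $E(q)$ against a family of mutually orthogonal positive contractions in $B$ and bounding the sum of the resulting norms by $1$ using the rank-one structure of $q$ --- it is organised differently, in a way that streamlines the argument. The paper proves its finiteness claim about the \emph{operator spectrum} of $E(q)$ (via functional-calculus bump functions applied to $E(q)$ itself), and then needs a second pass, repeatedly subdividing an isolated spectral projection using Lemma \ref{new lemma}, to rule out infinite-rank spectral projections. You instead prove finiteness of the level sets of $E(q)$ in the \emph{Gelfand spectrum of $B$}, which is a strictly stronger statement (an infinite-rank projection in $B$ has two-point operator spectrum but, by Lemma \ref{new lemma}, an infinite clopen level set), so both stages of the paper's argument collapse into your single Urysohn/Bessel step; the conclusion that $E(q)$ is a norm-convergent sum of scalar multiples of minimal --- hence rank-one --- projections then falls out directly. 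Your quantitative bound also replaces the paper's use of the canonical trace and the trace norm with the elementary identity $\|c_nqc_n\|=\|c_n\xi\|^2$ and the vector state at $\xi$; these compute the same quantity, but your version avoids invoking $\mathrm{Tr}$ altogether.

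Two small points. First, the existence of \emph{infinitely many} pairwise disjoint open sets each meeting an infinite set $K_\epsilon$ is true but not quite immediate from ``compactness and Hausdorffness''; you can sidestep this entirely by fixing $N$, choosing $N$ distinct points of $K_\epsilon$, separating them by pairwise disjoint open sets (trivial for finitely many points of a Hausdorff space), and concluding $N\epsilon\le 1$ for every $N$. Second, the identification $B\cong C(X)$ with $X$ compact tacitly assumes $B$ is unital; in the non-unital case the same argument runs verbatim with $C_0(X)$ for $X$ locally compact, where $K_\epsilon$ is still compact.
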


\begin{proof}
It suffices to show that $E(e)$ is compact for any rank one projection $e$ on $H$, which we fix from now on.  First, we establish the following claim, called ($*$) in the rest of the proof: there cannot exist $\lambda>0$ such that for any $N\in \N$ there are positive and mutually orthogonal contractions $b_1,...,b_N$ in $B$ such that $\|b_iE(e)b_i\|\geq \lambda$ for each $i$.  Indeed, if such a $\lambda>0$ exists, then find $b_1,...,b_N$ with the properties above.  Let $\text{Tr}:\mathcal{B}(H)_+\to [0,\infty]$ be the canonical unbounded trace.  Then we have that 
\begin{equation}\label{small trace}
\text{Tr}\Big(\sum_{i=1}^N b_ieb_i\Big)=\Bigg|\text{Tr}\Big(\sum_{i=1}^N b_i^2e\Big)\Bigg|\leq \Bigg\|\sum_{i=1}^Nb_i^2\Bigg\|\text{Tr}(e)=1,
\end{equation}
where the last inequality follows as mutual orthogonality of the $b_i$ gives $\|\sum_{i=1}^Nb_i^2\|=\sup_{i=1}^N \|b_i^2\|$, and this is at most one as each $b_i$ is a contraction.  
On the other hand, using that $E$ is a conditional expectation (so in particular contractive) and that the $b_i$ are in $B$, we have that 
\begin{equation}
\|b_ieb_i\|\geq \|E(b_ieb_i)\|=\|b_iE(e)b_i\|\geq \lambda
\end{equation}
for each $i$.  Combining this with \eqref{small trace} and using $\|\cdot\|_1$ for the trace norm, we have
\begin{equation}
1\geq \text{Tr}\Big(\sum_{i=1}^N b_ieb_i\Big)=\sum_{i=1}^N \|b_ieb_i\|_1\geq \sum_{i=1}^N \|b_ieb_i\|\geq N\lambda
\end{equation}
(the penultimate inequality follows as the trace norm is always at least as big as the operator norm).  As $N$ was arbitrary, this is impossible, proving claim ($*$).

We next claim that for any $\lambda>0$, the intersection of the spectrum of $E(e)$ and $[\lambda,\infty)$ must be finite.  Indeed, if not, then fix $\lambda>0$ such that the intersection of the spectrum of $E(e)$ with $[\lambda,\infty)$ is infinite.  For any $N$, there are continuous functions $\phi_1,...,\phi_N:\R\to [0,1]$ supported on $[\lambda,\infty)$, with mutually disjoint supports, and with the property that each $\phi_i$ attains the value $1$ somewhere on the intersection of the spectrum of $E(e)$ and $[\lambda,\infty)$.  Setting $b_i:=\phi_i(E(e))$ the functional calculus gives us that the $b_i$ are positive, mutually orthogonal contractions with $\|b_iE(e)b_i\|\geq \lambda$ for each $i$ and so we have contradicted claim ($*$).

Thus the spectrum of $E(e)$ is a countable subset of $[0,\infty)$, and the only possible limit point is $0$. Given $\lambda>0$ in this spectrum, let $p:=\chi_{\{\lambda\}}(E(e))\in B$ be the associated spectral projection.  Suppose by way of reaching a contradiction that $p$ has infinite rank. By Lemma \ref{new lemma}, $p$ is not a minimal projection in $B$ so has a proper subprojection $p_1\in B$. By replacing $p_1$ with $p-p_1$ if necessary we may assume $p_1$ is also infinite rank.  Repeating this argument we obtain a strictly decreasing infinite sequence $p\geq p_1\geq p_2\geq \cdots $ of infinite rank projections in $B$.  Set $b_i:=p_i-p_{i-1}$.  Then for any $i$, we have 
\begin{equation}
\|b_iE(e)b_i\|\geq \frac{1}{\lambda}\|b_ipb_i\|=\frac{1}{\lambda}.
\end{equation}
This contradicts claim ($*$). Therefore $p$ is finite rank, and hence $E(e)$ is compact.
\end{proof}

\begin{lemma}\label{onb lem}
Suppose that $A\subseteq \mathcal{B}(H)$ is a concrete $C^*$-algebra containing the compact operators on $H$.  Let $B\subseteq A$ be a Cartan subalgebra.  Then $B$ contains a complete orthogonal set of rank one projections.
\end{lemma}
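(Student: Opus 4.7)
The plan is to build the complete orthogonal set of rank one projections using a standard Zorn's lemma / maximality argument, with Lemma \ref{comp lem} and Lemma \ref{new lemma} providing the essential input that lets us enlarge any non-maximal family.

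First, I would fix, by Zorn's lemma, a maximal family $\mathcal P = \{p_\alpha\}$ of pairwise orthogonal rank one projections in $B$ (such projections exist, for instance by applying the construction below to any rank one projection on $H$). I then want to show $\sum_\alpha p_\alpha$ is the identity on $H$. Assume for contradiction this is not the case, and pick a unit vector $\xi\in H$ orthogonal to the range of every $p_\alpha$. Let $e=|\xi\rangle\langle\xi|\in\mathcal K(H)\subseteq A$. By Lemma \ref{comp lem}, $E(e)\in B$ is compact, and by faithfulness of $E$ it is a nonzero positive operator, hence its spectrum contains some $\lambda>0$. The argument in the proof of Lemma \ref{comp lem} shows that the spectral projection $q_\lambda:=\chi_{\{\lambda\}}(E(e))\in B$ is finite rank.

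The key step is to show $q_\lambda$ is orthogonal to every $p_\alpha$. Both $q_\lambda$ and $p_\alpha$ lie in the abelian algebra $B$, so $p_\alpha q_\lambda$ is a projection in $B$ majorised by $p_\alpha$. Since $p_\alpha$ is a minimal projection in $B$ (being rank one on $H$ and $B$ abelian), either $p_\alpha q_\lambda=0$ or $p_\alpha q_\lambda=p_\alpha$. In the latter case $p_\alpha\le q_\lambda$ so $p_\alpha E(e)p_\alpha=\lambda p_\alpha$; on the other hand, using that $p_\alpha\in B$ and that $E$ is a $B$-bimodule map,
\[
p_\alpha E(e)p_\alpha=E(p_\alpha e p_\alpha)=E(|p_\alpha\xi\rangle\langle p_\alpha\xi|)=0
\]
because $p_\alpha\xi=0$ by choice of $\xi$. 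This forces $\lambda p_\alpha=0$, a contradiction. Hence $p_\alpha q_\lambda=0$ for every $\alpha$.

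Finally, I need to extract a rank one projection in $B$ dominated by $q_\lambda$ to violate maximality of $\mathcal P$. Since $q_\lambda$ is a finite rank projection and $B$ is abelian, $q_\lambda B q_\lambda$ is a finite-dimensional commutative $C^*$-algebra, so it has a minimal projection $r$; an elementary check ($r'\le r$ in $B$ forces $r'\in q_\lambda B q_\lambda$) shows $r$ is minimal in $B$ as well. By Lemma \ref{new lemma}, $r$ is rank one on $H$, and by construction $r\le q_\lambda$ is orthogonal to every $p_\alpha$, contradicting maximality. The main obstacle is really just the orthogonality computation in the second paragraph; once one notices that minimality of $p_\alpha$ in $B$ makes $p_\alpha\wedge q_\lambda$ a dichotomy, the rest is bookkeeping.
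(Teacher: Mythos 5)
Your proof is correct, but it reaches completeness by a genuinely different route than the paper. Both arguments share the same opening moves: Lemma \ref{comp lem} plus faithfulness of $E$ produce a non-zero finite rank projection in $B$, and Lemma \ref{new lemma} turns minimal projections of $B$ into rank one projections. The divergence is in how one shows the family of rank one projections exhausts $H$. The paper sums \emph{all} rank one projections of $B$ into a projection $p$, shows $p$ commutes with every normaliser of $B$ (a normaliser conjugates a rank one projection of $B$ to a scalar multiple of another one), and then invokes axiom (iii) of Definition \ref{cart def} --- that $\mathcal{N}_A(B)$ generates $A$ --- together with $\mathcal{K}(H)\subseteq A$ to force $p=1$. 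You instead take a maximal orthogonal family via Zorn, and rule out a leftover unit vector $\xi$ by the clean computation $p_\alpha E(e)p_\alpha = E(p_\alpha e p_\alpha)=0$ versus $p_\alpha E(e)p_\alpha=\lambda p_\alpha$ when $p_\alpha\leq \chi_{\{\lambda\}}(E(e))$, which shows the new finite rank spectral projection is orthogonal to the whole family. The notable consequence is that your argument never touches the normaliser axiom: it establishes the conclusion for any maximal abelian subalgebra of $A$ admitting a faithful conditional expectation, which is formally more general than the paper's statement (the paper's proof genuinely uses normaliser generation in its last step). All the supporting details check out: $\chi_{\{\lambda\}}(E(e))$ lies in $B$ because $\lambda$ is isolated in the spectrum of the compact positive operator $E(e)$, the dichotomy $p_\alpha q_\lambda\in\{0,p_\alpha\}$ needs only that $p_\alpha$ is rank one and commutes with $q_\lambda$, and the bimodule property of $E$ is exactly what the paper itself uses inside the proof of Lemma \ref{comp lem}.
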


\begin{proof}
Write $E:A\to B$ for the faithful conditional expectation that comes with the fact that $B$ is a Cartan subalgebra of $A$, and let $e$ be a rank one projection.  Then $E(e)$ is compact by Lemma \ref{comp lem}, and non-zero as $E$ is faithful.  It follows from the spectral theorem that $B$ contains a non-zero finite rank projection, and thus a minimal non-zero finite rank projection, say $q$, which must be rank one by Lemma \ref{new lemma}.

Let now $S$ be the collection of all rank one projections in $B$, which is non-empty by the above argument.  As $B$ is commutative, the projections in $S$ are all mutually orthogonal, and thus the sum $p:=\sum_{q\in S} q$ converges strongly to a non-zero projection.  Note that as $p$ is a strong limit of operators in $B$, it commutes with everything in $B$.  We claim that in fact $p$ commutes with everything in the normaliser of $B$ in $A$.  Indeed, if not, there exists $a\in \mathcal{N}_A(B)$ such that $pa(1-p)\neq 0$.  The definition of $p$ thus gives a rank one projection $q$ in $B$ such that $qa(1-p)\neq 0$.  Hence $(1-p)a^*qa(1-p)\neq 0$; note that this operator is positive and rank one, so a non-zero scalar multiple of a projection, say $r$.  As $a$ normalises $B$, the element $r$ is in the cut-down $(1-p)B$, which is a commutative $C^*$-algebra as $p$ commutes with $B$.  Now, $r$ is in $A$ as it is rank one and $A$ contains the compacts.  Hence it is in $B$ as this $C^*$-algebra is maximal abelian in $A$ and as $r$ commutes with $B\subseteq pB\oplus (1-p)B$.  However, $r$ is orthogonal to $p$, a contradiction. Therefore $p$ commutes with $\mathcal N_A(B)$.

Finally, as $B\subseteq A$ is a Cartan subalgebra, $\mathcal{N}_A(B)$ generates $A$ as a $C^*$-algebra, and thus $p$ commutes with everything in $A$.  As $A$ contains the compacts, this forces $p=1$.
\end{proof}

Recall that if $S$ is a subset of $\mathcal{B}(H)$, then $C^*(S)$ denotes the $C^*$-algebra generated by $S$, and $W^*(S)$ the von Neumann algebra generated by $S$. 

\begin{theorem}\label{cart com the}
Let $A\subseteq \mathcal{B}(H)$ be a concrete $C^*$-algebra that contains the compact operators on $H$, and let $B\subseteq A$ be a Cartan subalgebra.  Then there exists a complete orthogonal set of rank one projections $\{p_i\}_{i\in I}$ on $H$ such that 
$$
C^*(\{p_i\}_{i\in I})\subseteq B \subseteq vN(\{p_i\}_{i\in I}).
$$
\end{theorem}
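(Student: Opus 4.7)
The plan is to observe that Lemma~\ref{onb lem} already supplies almost everything, and only a brief calculation is needed to extract the two inclusions.

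First I would invoke Lemma~\ref{onb lem} directly to obtain a complete orthogonal family $\{p_i\}_{i\in I}$ of rank one projections lying in $B$. The left-hand inclusion $C^*(\{p_i\}_{i\in I}) \subseteq B$ is then immediate, since $B$ is a $C^*$-algebra containing each $p_i$.

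For the right-hand inclusion $B \subseteq vN(\{p_i\}_{i\in I})$, the key point is that $vN(\{p_i\}_{i\in I})$ is precisely the algebra of operators that are diagonal with respect to the decomposition $H = \bigoplus_{i\in I} p_i H$. Fix $b \in B$. Since $B$ is commutative and contains each $p_i$, we have $[b,p_i]=0$ for every $i$. Hence $p_j b p_i = 0$ whenever $i \neq j$, and for each $i$ the cut-down $p_i b p_i$ is supported on the one-dimensional subspace $p_i H$, so $p_i b p_i = \lambda_i p_i$ for some $\lambda_i \in \mathbb{C}$. Using completeness of the family (so that $\sum_i p_i = 1_H$ in the strong operator topology) and boundedness of $b$, for any $\xi \in H$ I would then compute
$$
b\xi \;=\; \sum_i b p_i \xi \;=\; \sum_i p_i b p_i \xi \;=\; \sum_i \lambda_i p_i \xi,
$$
which shows $b = \sum_i \lambda_i p_i$ in the strong operator topology. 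Therefore $b$ belongs to $vN(\{p_i\}_{i\in I})$, completing the second inclusion.

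There is no serious obstacle here: the substantive work has been carried out in Lemmas~\ref{new lemma},~\ref{comp lem}, and especially~\ref{onb lem}, where the existence of a complete orthogonal family of rank one projections inside $B$ was established by combining the compactness of $E(e)$ with the maximality argument ruling out infinite-rank minimal-type projections in $B$. The present theorem is the structural reformulation of that result.
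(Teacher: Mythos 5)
Your proof is correct and follows essentially the same route as the paper: both deduce everything from Lemma~\ref{onb lem}, with the first inclusion being immediate and the second coming from the fact that $W^*(\{p_i\}_{i\in I})$ is the maximal abelian $^*$-subalgebra of $\mathcal{B}(H)$ containing the $p_i$. The only difference is that you verify this maximality explicitly via the computation $b=\sum_i\lambda_i p_i$ (strongly), where the paper simply cites the standard fact.
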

\begin{proof}
Let $\{p_i\}_{i\in I}$ be the complete set of orthogonal rank one projections in $B$ given by Lemma \ref{onb lem}.  As $B$ is a $C^*$-algebra, it contains $C^*(\{p_i\})$.  As $W^*(\{p_i\})$ is the maximal abelian $^*$-subalgebra of $\mathcal{B}(H)$ that contains $C^*(\{p_i\})$, $B$ is contained in $W^*(\{p_i\})$.
\end{proof}

Note that the conclusion of Theorem \ref{cart com the} on the structure of $B$ is best possible with those assumptions.  Indeed, if $\{p_i\}_{i\in I}$ is a complete orthogonal set of rank one projections on $H$, and $B$ is a $C^*$-subalgebra of $\mathcal{B}(H)$ with 
\begin{equation}
C^*(\{p_i\})\subseteq B\subseteq W^*(\{p_i\})
\end{equation}
then $A:=B+\mathcal{K}(H)$ clearly contains $B$ as a Cartan subalgebra.

On the other hand, we have the following observation giving some sufficient conditions for $B$ to equal $W^*(\{p_i\})$, which will play a role later in the paper.

\begin{proposition}\label{full basis}
Let $B\subseteq \mathcal{B}(H)$ be a concrete $C^*$-algebra such that there is a complete orthogonal set $\{p_i\}_{i\in I}$ of rank one projections such that 
\begin{equation}
C^*(\{p_i\})\subseteq B\subseteq W^*(\{p_i\}).
\end{equation}
Assume moreover that either:
\begin{enumerate}[(i)]
\itemsep-.1cm 
\item $B$ is closed in the strong topology\footnote{When $B$ is contained in a $C^*$-algebra $A\subseteq\mathcal B(H)$ containing the compact operators as in Theorem \ref{cart com the}, this can be defined in a representation independent way using that $b_n\to b$ strongly if and only if $b_nf \to bf$ in norm for each finite rank $f\in A$; this can be made sense of in a representation independent way as the finite rank operators are the unique minimal algebraic ideal of $A$.}; or
\item $B$ is abstractly $^*$-isomorphic to $\ell^\infty(X)$ for some set $X$.
\end{enumerate}
Then $B$ equals $W^*(\{p_i\})$.  
\end{proposition}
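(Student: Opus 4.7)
My plan is to show that every projection in $W^*(\{p_i\})$, equivalently every $q_J := \sum_{i \in J} p_i$ for $J \subseteq I$, already lies in $B$, and then to recover all of $W^*(\{p_i\})$ from these by the spectral theorem. Throughout I identify $W^*(\{p_i\})$ with $\ell^\infty(I)$ via $(\lambda_i)_{i\in I} \mapsto \sum_i \lambda_i p_i$ (strong operator sum); under this identification $C^*(\{p_i\})$ becomes $c_0(I)$, so in particular every individual $p_i$ and every \emph{finite} partial sum of the $p_i$'s automatically lies in $B$.

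Case (i) is essentially immediate. For any $b = \sum_i \lambda_i p_i \in W^*(\{p_i\})$, the net of finite partial sums $b_F = \sum_{i \in F}\lambda_i p_i$ lies in $C^*(\{p_i\}) \subseteq B$ and converges strongly to $b$, so the assumed strong closure of $B$ gives $b \in B$.

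For case (ii), the leverage comes from the fact that $B \cong \ell^\infty(X)$ is monotone complete as an abstract $C^*$-algebra. Fix $J \subseteq I$. The net $\{\sum_{i \in F} p_i : F \subseteq J\text{ finite}\}$ is bounded above by $1$ and increasing inside $B$, hence admits a supremum $q_J^B$ in $B$. Because $q_J^B$ is a projection in $B \subseteq \ell^\infty(I)$, it must equal $q_K$ for some $K \subseteq I$, and dominating each finite partial sum forces $K \supseteq J$. I plan to rule out $K \supsetneq J$ as follows: if $i \in K \setminus J$ existed, then $p_i \in c_0(I) \subseteq B$, so $q_J^B - p_i = q_{K \setminus \{i\}}$ would also be a projection in $B$, still an upper bound for the net (since $i \notin J$), but strictly below $q_J^B$ — contradicting the supremum property. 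Hence $q_J = q_J^B \in B$. Now the spectral theorem writes any self-adjoint element of $\ell^\infty(I)$ as a norm-limit of finite real-linear combinations of its spectral projections, each of which is of the form $q_J$, and norm-closure of $B$ completes the proof.

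The main obstacle is the middle step of case (ii): in general the supremum of a bounded increasing net computed in a monotone-complete $C^*$-subalgebra need \emph{not} agree with the ambient $\mathcal B(H)$-supremum, so one must genuinely verify that $q_J^B$ equals $q_J$ rather than some larger projection in $\ell^\infty(I)$. The inclusion $C^*(\{p_i\}) \subseteq B$ is doing the crucial work here: having every individual $p_i$ already in $B$ provides exactly enough minimal projections to subtract off any excess mass and force the two suprema to coincide.
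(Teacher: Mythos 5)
Your proof is correct. Case (i) is exactly the paper's one\nobreakdash-line argument. For case (ii) you take a genuinely different, order\nobreakdash-theoretic route: the paper instead uses the abstract isomorphism $\phi\colon B\to\ell^\infty(X)$ directly, noting that $\phi$ bijects the minimal projections $\{p_i\}$ onto the minimal projections of $\ell^\infty(X)$ and hence induces a bijection $f\colon I\to X$; for $S\subseteq I$ it then pulls back the characteristic function $q_{f(S)}\in\ell^\infty(X)$ and observes that $\phi^{-1}(q_{f(S)})$ is a projection in $B\subseteq W^*(\{p_i\})$ whose products with every $p_i$ are prescribed ($p_i$ for $i\in S$, zero otherwise), which forces $\phi^{-1}(q_{f(S)})=\sum_{i\in S}p_i$. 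So the paper exploits the hypothesis $B\cong\ell^\infty(X)$ through the fact that $\ell^\infty(X)$ contains the characteristic function of \emph{every} subset, whereas you exploit it only through monotone completeness, computing the least upper bound of the finite partial sums inside $B$ and using the presence of the individual $p_i$'s in $C^*(\{p_i\})\subseteq B$ to subtract off any excess and identify that supremum with $\sum_{i\in J}p_i$. The two elements produced are of course the same, but your version isolates a weaker hypothesis that suffices (any monotone complete, or even just projection-sup-complete, $B$ between $C^*(\{p_i\})$ and $W^*(\{p_i\})$ must equal $W^*(\{p_i\})$), at the modest cost of having to know that the supremum of an increasing net of projections in $B\cong\ell^\infty(X)$ is again a projection --- which you assert rather than prove, though it follows immediately from the fact that least upper bounds in $\ell^\infty(X)_{\mathrm{sa}}$ are computed pointwise. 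Both arguments finish identically, via norm-density of the span of the projections in $W^*(\{p_i\})$.
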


\begin{proof}
As the strong closure of $C^*(\{p_i\})$ equals $W^*(\{p_i\})$, part (i) is clear.  For part (ii), let $\phi:B\to \ell^\infty(X)$ be an abstract $^*$-isomorphism.  As $\phi$ must take the family $\{p_i\}_{i\in I}$ of minimal projections in $B$ bijectively to the family $\{q_x\}_{x\in X}$ of minimal projections in $\ell^\infty(X)$, it induces a bijection $f:I\to X$.  Note that if $S\subseteq I$ and $q_{f(S)}:=\sum_{i\in S}q_{f(i)}$ is the corresponding projection in $\ell^\infty(X)$, then $\phi^{-1}(q_{f(S)})$ is a projection on $H$ that commutes with the set $\{p_i\}_{i\in I}$, and that satisfies 
\begin{equation}
\phi^{-1}(q_{f(S)})p_i=\left\{\begin{array}{ll} p_i & i\in S \\ 0 & i\not\in S\end{array}\right..
\end{equation}
This is only possible if $\phi^{-1}(q_{f(S)})$ equals the projection $p_S:=\sum_{i\in S}p_i$ on $H$.  Hence $p_S$ is in $B$, and as $S$ was arbitrary, $B$ contains all projections in $W^*(\{p_i\})$.  The projections in $W^*(\{p_i\})$ span a norm-dense subset, however, so this gives us $B=W^*(\{p_i\})$.
\end{proof}

The next lemma adds another assumption on $A$ in order to limit the structure of $B$ a little more.  In order to state it, we introduce a little more notation. We will work on a separable Hilbert space, so any complete orthogonal set of projections can, and will, be indexed by $\mathbb N$. With this additional assumption we use the notation of Theorem \ref{cart com the}, and let $\{p_n\}_{n=1}^\infty$ be as in the conclusion, so in particular 
\begin{equation}
C^*(\{p_n\})\subseteq B\subseteq W^*(\{p_n\}).
\end{equation}
Assume moreover $A$ is unital, whence $B$ is too.  Then the spectrum of $B$ is a compact Hausdorff set $\widehat{B}$ that contains a copy of $\mathbb N$ as an open, dense, discrete subset; indeed, this follows as $C^*(\{p_n\})$ is an essential ideal in $B$, and the spectrum of $C^*(\{p_n\})$ identifies with $\mathbb N$.  Write $\widehat{B}_\infty:=\widehat{B}\setminus \mathbb N$, so that $\widehat{B}_\infty$ is a closed subset of $\widehat{B}$; by density of $\mathbb N$ in $\widehat{B}$, note that every point in $\widehat{B}_\infty$ is a limit of a net from $\N$.  Since a uniform Roe algebra satisfies the conditions on $A$ below, the following result also proves Proposition \ref{IntroA} from the introduction.

\begin{proposition}\label{non sep}
Let $A\subseteq \mathcal{B}(H)$ be a concrete unital $C^*$-algebra containing the compact operators, and assume that $H$ is infinite dimensional and separable.   Let $B\subseteq A$ be a Cartan subalgebra, with
\begin{equation}
C^*(\{p_n\})\subseteq B\subseteq W^*(\{p_n\}).
\end{equation}
as above.  Assume moreover that there is another complete orthogonal set of projections $\{q_n\}_{n=1}^\infty$ for $H$ such that $A$ contains $W^*(\{q_n\})$.  Then no element of $\widehat{B}_\infty$ is the limit of a sequence from $\mathbb N$. In particular $B$ is non-separable.
\end{proposition}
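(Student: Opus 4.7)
The plan is to argue by contradiction: assume some $\omega\in\widehat B_\infty$ is the limit of a sequence $(n_k)_{k=1}^\infty\subseteq\N$, so that $\chi_{n_k}\to\chi_\omega$ pointwise on $B$. I will produce a single element of $B$ of the form $E(q_S)$, with $S\subseteq\N$ and $q_S:=\sum_{m\in S}q_m\in W^*(\{q_m\})\subseteq A$, on which this convergence fails. Once this is done, the non-separability of $B$ is a soft consequence: $\widehat B$ is compact while $\N$ is not, so $\widehat B_\infty\ne\emptyset$; if $B$ were separable then $\widehat B$ would be metrisable, and density of $\N$ in $\widehat B$ would force every $\omega\in\widehat B_\infty$ to be a sequential limit from $\N$, contradicting the first statement.

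For the main claim, write $p_n=|\xi_n\rangle\langle\xi_n|$ for a unit vector $\xi_n$, and set $\lambda_{n,m}:=\langle\xi_n,q_m\xi_n\rangle\ge 0$. Since $p_n\in B$, the bimodule property of the conditional expectation gives $p_nE(q_m)p_n=E(p_nq_mp_n)=\lambda_{n,m}p_n$, and comparing with $p_nE(q_m)p_n=\chi_n(E(q_m))p_n$ identifies $\chi_n(E(q_m))=\lambda_{n,m}$. Because $\sum_m q_m=1$ strongly and vector states are $\sigma$-additive on orthogonal projections, $\sum_m\lambda_{n,m}=1$ for every $n$. Moreover, taking the $q_m$ to be rank one as in the intended application, each $q_m$ is compact, so Lemma~\ref{comp lem} places $E(q_m)$ in $B\cap\mathcal K(H)=C^*(\{p_n\})$; as this ideal consists of functions on $\widehat B$ vanishing on $\widehat B_\infty$, one has $\chi_\omega(E(q_m))=0$. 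Feeding this back through the assumed pointwise convergence, $\lambda_{n_k,m}\to 0$ as $k\to\infty$ for every fixed $m$.

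This is a classical escape-of-mass situation: for each $k$ the sequence $(\lambda_{n_k,m})_m$ is a probability distribution on $\N$ whose mass at any fixed $m$ tends to zero. Exploit this by inductively picking $k_1<k_2<\cdots$ and $0=M_0<M_1<M_2<\cdots$ so that at stage $j$
\[
\sum_{m\le M_{j-1}}\lambda_{n_{k_j},m}<\tfrac18\quad\text{and}\quad\sum_{M_{j-1}<m\le M_j}\lambda_{n_{k_j},m}>\tfrac34,
\]
using the pointwise convergence to zero to choose $k_j$ and then total mass $1$ to pick $M_j$. Set $S:=\bigcup_{j\text{ odd}}\{m:M_{j-1}<m\le M_j\}$. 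The vector-state identity gives $\chi_n(E(q_S))=\langle\xi_n,q_S\xi_n\rangle=\sum_{m\in S}\lambda_{n,m}$. For $j$ odd the block $(M_{j-1},M_j]$ lies in $S$, so $\chi_{n_{k_j}}(E(q_S))>3/4$; for $j$ even only mass from $\{m\le M_{j-1}\}\cup\{m>M_j\}$ contributes, which is at most $1/8+1/4=3/8$ (the $1/4$ coming from $1-3/4$ via the second displayed bound). This oscillation contradicts $\chi_{n_k}(E(q_S))\to\chi_\omega(E(q_S))$. The main subtlety to handle carefully is the initial identification $\chi_n(E(q_m))=\langle\xi_n,q_m\xi_n\rangle$ and the observation that $E(q_m)$ lies in the compact ideal of $B$; once these are in place, the rest is a routine escape-of-mass argument.
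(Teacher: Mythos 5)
Your argument is correct. It shares the paper's underlying strategy---manufacture an operator in $W^*(\{q_m\})\subseteq A$ whose conditional expectation, read as a function on $\N\subseteq\widehat B$, oscillates along the given sequence---but the execution is genuinely different. The paper runs a four-condition induction producing two interleaved subsequences $(n_k)$ and $(m_k)$ together with mutually orthogonal finite-rank projections $e_k$, and works throughout with the norms $\|p_nep_n\|$, never invoking Lemma \ref{comp lem}. You instead isolate the identity $\chi_n(E(q_m))=\langle\xi_n,q_m\xi_n\rangle$ and use Lemma \ref{comp lem} to place each $E(q_m)$ in $C^*(\{p_n\})$, so that $\chi_\omega(E(q_m))=0$; this converts the problem into a standard escape-of-mass statement for the probability distributions $(\lambda_{n,m})_m$, which a single subsequence and the alternating-block projection $q_S$ then resolve. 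Your route buys conceptual clarity---the contradiction is visibly the failure of a sequence of probability measures on $\N$ to converge---at the price of invoking Lemma \ref{comp lem}; the paper's route avoids that lemma but pays with heavier bookkeeping. One caveat, which applies equally to the paper's own proof: your appeal to Lemma \ref{comp lem} needs each $q_m$ to be compact (you take them rank one), which the statement of Proposition \ref{non sep} does not literally assert, but the paper's construction of finite-rank projections $Q_r=q_1+\cdots+q_r$ inside $W^*(\{q_j\})$ makes the same tacit assumption, and it holds in every intended application. Your deduction of non-separability at the end (compactness of $\widehat B$ forces $\widehat B_\infty\neq\emptyset$; separability of $B$ would give metrisability of $\widehat B$, hence sequential density of $\N$) is exactly what the paper means by ``in particular''.
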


The assumptions of the lemma apply if $A$ is the uniform Roe algebra of a bounded geometry metric space (see Definition \ref{roe alg} below), and $B$ any Cartan subalgebra of $A$.  One can think of the lemma as saying that the topology of the spectrum of $B$ must be fairly complicated, and in particular $B$ cannot be separable. However, it does not imply that $B$ is all of $\ell^\infty$ as we will see in Example \ref{bad cartan ex} below.

\begin{proof}
We will identify $\widehat{B}=\N\sqcup \widehat{B}_\infty$, and write $\{p_n\}_{n\in \N}$ for the complete orthogonal set of projections that we started with; in terms of the spectrum $\widehat{B}=\N\sqcup \widehat{B}_\infty$ of $B$, $p_n$ can be thought of as the characteristic function of the singleton $\{n\}$.  For each $r\in \N$, let $Q_r\in W^*(\{q_n\})$ be defined by
\begin{equation}
Q_r:=q_1+\cdots q_r
\end{equation}
and set $Q_0=0$.

Assume for contradiction that there is some point $x_\infty\in \widehat{B}_\infty$ and a sequence in $\N$ that converges to it.  We will iteratively construct strictly increasing subsequences $(n_k)_{k=1}^\infty$ and $(m_k)_{k=1}^\infty$ of the given sequence converging to $x_\infty$, a strictly increasing sequence $(r_k)_{k=1}^\infty$ in $\N\cup\{0\}$, and a sequence $(e_k)_{k=1}^\infty$ of mutually orthogonal finite rank projections in $W^*(\{q_j\})$ with the following properties:
\begin{enumerate}[(i)]
\item $\|p_{n_k}e_kp_{n_k}\|>3/4$ for all $k$\label{p big};
\item $\|p_{m_k}e_jp_{m_k}\|<(1/4)2^{-j}$ for all $k$ and all $j\in \{1,...,k\}$; \label{p small}
\item $e_k\leq  1-Q_{r_k}$ for all $k$; \label{big e}
\item $\|p_{m_j}Q_{r_k}p_{m_j}\|>3/4$ for all $j\in \{1,...,k-1\}$. \label{big q}
\end{enumerate}
Indeed, to start the process off with $k=1$, set $r_1:=0$, so $Q_{r_1}=0$.  Let $n_1$ be the first element of the given sequence that converges to $x_\infty$, and choose $e_1=Q_r$ where $r$ is large enough that \eqref{p big} holds. Now choose $m_1$ large enough in the given sequence so that \eqref{p small} holds.  Note that  
\eqref{big q} and \eqref{big e} are vacuous.  Now, say we have constructed the desired elements up to stage $k$.  Choose $r_{k+1}>r_k$ large enough so that  
\eqref{big q} holds.  Choose $n_{k+1}>n_k$ far enough along the sequence converging to $x_\infty$ so that $\|p_{n_{k+1}}Q_{r_k}p_{n_{k+1}}\|<1/4$. Then choose $e_{k+1}$ so that \eqref{p big} and \eqref{big e} hold.  Finally,  choose $m_{k+1}>m_k$ far enough along the given sequence so that \eqref{p small} holds.  It is not too difficult to show that the resulting sequences have the claimed properties.

Now, given the above, set $e:=\sum_{k=1}^\infty e_k$, which converges strongly to an element of $W^*(\{q_j\})$.  Let $E:A\to B$ be the conditional expectation.  Thinking of elements of $B$ as functions on $\N$, we have that $E(e)$ is the function $f:n\mapsto \|p_nep_n\|$.  On the one hand, note that \eqref{p big} gives
\begin{equation}\label{e bigg}
\|p_{n_k}ep_{n_k}\|\geq \|p_{n_k}e_kp_{n_k}\|>3/4
\end{equation}
for each $k$. On the other hand, we have
\begin{eqnarray}\label{e smalll}
\|p_{m_k}ep_{m_k}\|&\leq& \sum_{j=1}^k \|p_{m_k}e_jp_{m_k}\|+\|p_{m_k}(\sum_{j=k+1}^\infty e_j)p_{m_k}\|\nonumber\\
&\stackrel{\eqref{p small},\ \eqref{big e}}{\leq}&\frac{1}{4}+\|p_{m_k}(1-Q_{r_{k+1}})p_{m_k}\|\\
&\stackrel{\eqref{big q}}{\leq}&\frac{1}{2}.
\end{eqnarray}
Now, as both sequences $(n_k)$ and $(m_k)$ converge to $x_\infty$, we have that  
\begin{equation}
f(x_\infty)=\lim_{k\to\infty}f(n_k)=\lim_{k\to\infty} \|p_{n_k}ep_{n_k}\|\geq 3/4
\end{equation}
from \eqref{e bigg}, and that 
\begin{equation}
f(x_\infty)=\lim_{k\to\infty}f(m_k)=\lim_{k\to\infty} \|p_{m_k}ep_{m_k}\|\leq 1/2
\end{equation}
from \eqref{e smalll}, giving us the desired contradiction.
\end{proof}

\section{An exotic Cartan subalgebra of a uniform Roe algebra}\label{S3}

In this short section we give an example of a Cartan subalgebra of a uniform Roe algebra with `exotic' spectrum.  We begin by recalling the definitions of bounded geometry metric spaces and the associated uniform Roe algebras.

\begin{definition}\label{bg def}
A metric space $X$ has \emph{bounded geometry} if for all $r>0$ there is $n_r\in \N$ such that all balls in $X$ of radius $r$ have at most $n_r$ elements. 

A function $f:X\to Y$ between metric spaces is \emph{uniformly expansive} if for all $r>0$ we have that 
$$
\sup_{x,y\in X,~d_X(x,y)\leq r} d_Y(f(x),f(y))<\infty.
$$ 
A function $f:X\to Y$ is a \emph{coarse equivalence} if it is uniformly expansive, and if there is a uniformly expansive function $g:Y\to X$ such that 
$$
\sup_{x\in X} d_X(x,g(f(x)))<\infty \quad \text{and}\quad\sup_{y\in Y} d_Y(y,f(g(y)))<\infty.
$$
Metric spaces $X$ and $Y$ are called \emph{coarsely equivalent} when there exists a coarse equivalence $f:X\rightarrow Y$.
\end{definition}

\begin{definition}\label{roe alg}
Let $X$ be a bounded geometry metric space, and let $a$ be a bounded operator on $\ell^2(X)$, which we think of as an $X$-by-$X$ matrix $a=(a_{xy})_{x,y\in X}$.  The \emph{propagation} of $a$ is
$$
\text{prop}(a):=\sup\{d(x,y)\mid a_{xy}\neq 0\}\in [0,\infty].
$$
Let $\C_u[X]$ denote the collection of bounded operators on $\ell^2(X)$ with finite propagation; this is a $^*$-algebra.  The \emph{uniform Roe algebra} of $X$, denoted $C^*_u(X)$, is the closure of $\C_u[X]$ for the operator norm.
\end{definition}

As a special case, note that if $X$ is a finitely generated group $\Gamma$ equipped with some word metric, then $C^*_u(X)$ is naturally $^*$-isomorphic to $\ell^\infty(\Gamma)\rtimes_r\Gamma$; this is proved for example in \cite[Proposition 5.1.3]{Brown:2008qy}.  

The uniform Roe algebra of a bounded geometry metric space always contains the compact operators $\mathcal{K}(\ell^2(X))$, as an essential ideal (note that $\mathcal{K}(\ell^2(X))$ is also the unique minimal $C^*$-ideal), and hence fits into the framework of the previous section.  Moreover, the subalgebra $\ell^\infty(X)$ of multiplication operators is a Cartan subalgebra (we prove this in more generality in Proposition \ref{l inf is cartan} below); hence in particular Proposition \ref{non sep} applies to uniform Roe algebras.

\begin{example}\label{bad cartan ex}
Let 
\begin{equation}
X:=\{n^2\mid n\in \N\}
\end{equation}
be the space of square numbers\footnote{There is nothing particularly special about the sequence $(n^2)$ here: any strictly increasing subsequence $(a_n)$ of $\N$ such that $|a_{n+1}-a_n|\to \infty$ as $n\to\infty$ would work just as well.} equipped with the metric it inherits as a subspace of $\N$. Note that we have 
\begin{equation}\label{roe alg 2}
C^*_u(X)=\ell^\infty(X)+\mathcal{K}(\ell^2(X)).
\end{equation}
This follows as the points of $X$ get more and more widely spaced, whence the only finite propagation operators are of the form `diagonal plus finite rank'.

Now, for each $n\in \N$, let $\xi_n:=\frac{1}{\sqrt{2}}(\delta_{(2n-1)^2}+\delta_{(2n)^2})$ and $\eta_n:=\frac{1}{\sqrt{2}}(\delta_{(2n-1)^2}-\delta_{(2n)^2})$, so the set 
\begin{equation}
S:=\{\xi_n,\eta_n\mid n\in \N\}
\end{equation}
is an orthonormal basis for $\ell^2(X)$.  Let $\ell^\infty(S)$ be the corresponding $C^*$-algebra of multiplication operators on $\ell^2(X)$, and define $B:=C^*_u(X)\cap \ell^\infty(S)$.  Thinking of $\ell^2(X)$ as decomposed into a direct sum of two dimensional subspaces
\begin{equation}
\ell^2(X)=\bigoplus_{n\geq 1} \ell^2(\{(2n-1)^2,(2n)^2\}),
\end{equation}
operators in $\ell^\infty(S)$ look like 
\begin{equation}\label{mat prod}
\prod_{n\geq 1} \begin{pmatrix} a_n & b_n \\ b_n & a_n \end{pmatrix},
\end{equation}
where $(a_n)$ and $(b_n)$ are arbitrary bounded sequences.  Elements of $B$ look like this, except now we must also ask that $b_n\to 0$ as $n\to\infty$ (it is straightforward to check that this is a necessary and sufficient for such an operator from $\ell^\infty(S)$ to be in $C^*_u(X)$). 

We claim the algebra $B$ is a Cartan subalgebra of $C^*_u(X)$.  This follows from the computations below.
\begin{enumerate}[(i)]
\item It is maximal abelian:  The algebra $B$ contains $C_0(S)$.  The commutant of $C_0(S)$ in $\mathcal{B}(\ell^2(X))$ is $\ell^\infty(S)$, and thus $B$ contains everything in $C^*_u(X)$ that commutes with $C_0(S)$, and in particular contains everything that commutes with $B$ itself.  
\item The normaliser $\mathcal{N}_{C^*_u(X)}(B)$ generates $C^*_u(X)$:  Indeed, thinking of operators in $B$ as matrices as in \eqref{mat prod} above, we see that the normaliser of $B$ in $C^*_u(X)$ contains all products of matrices of the form 
\begin{equation}
\prod_{n\geq 1} \begin{pmatrix} c_n & 0 \\ 0 & c_n \end{pmatrix}\quad\text{and}\quad\prod_{n\geq 1} \begin{pmatrix} d_n & 0 \\ 0 & -d_n \end{pmatrix},
\end{equation}
where $(c_n)$, $(d_n)$ are arbitrary bounded sequences.  Clearly then the $C^*$-algebra generated by the normaliser $\mathcal{N}_{C^*_u(X)}(B)$ contains $\ell^\infty(X)$.  It also straightforward to see that it contains $\mathcal{K}(\ell^2(X))$, and so by \eqref{roe alg 2} is all of $C^*_u(X)$.
\item There is a faithful conditional expectation $C^*_u(X)\to B$.  Let $E:\mathcal{B}(\ell^2(X))\to \ell^\infty(S)$ be the canonical conditional expectation, which is faithful.   We need to check that $E$ takes $C^*_u(X)$ onto $B$ (and not onto some larger subalgebra of $\ell^\infty(S)$).  Looking at line \eqref{roe alg} above, $E$ takes $\mathcal{K}(\ell^2(X))$ to $C_0(S)\subseteq B$, so it suffices to check that $E(\ell^\infty(X))\subseteq B$.  With respect to a matrix decomposition as in \eqref{mat prod} above, an arbitrary element of $\ell^\infty(X)$ looks like 
\begin{equation}
\prod_{n\geq 1} \begin{pmatrix} a_n & 0 \\ 0 & b_n \end{pmatrix}
\end{equation}
for some bounded sequences $(a_n)$ and $(b_n)$.  The computation of the image of this element under $E$ may be performed one matrix at a time.   Doing this, with $E_n$ the restriction of $E$ to the bounded operators on $\ell^2(\{(2n-1)^2,(2n)^2\})$, we see that 
\begin{align}
E_n\Big(\begin{pmatrix} a_n & 0 \\ 0 & b_n \end{pmatrix}\Big) & =\frac{1}{2}\begin{pmatrix} 1 & 1 \\ 1 & 1 \end{pmatrix}\begin{pmatrix} a_n & 0 \\ 0 & b_n \end{pmatrix}\frac{1}{2}\begin{pmatrix} 1 & 1 \\ 1 & 1 \end{pmatrix}+\nonumber \\ & \quad \quad \quad \frac{1}{2}\begin{pmatrix} 1 & -1 \\ -1 & 1 \end{pmatrix}\begin{pmatrix} a_n & 0 \\ 0 & b_n\end{pmatrix} \frac{1}{2}\begin{pmatrix} 1 & -1 \\ -1 & 1 \end{pmatrix} \nonumber\\ 
& =\frac{1}{2}\begin{pmatrix} a_n+b_n & 0 \\ 0 & a_n+b_n \end{pmatrix}
\end{align}
and this is certainly in $B$.
\end{enumerate}
\end{example}

\begin{remark}\label{bad cart cosep}
The Cartan subalgebra $B$ above is co-separable in the sense of Definition \ref{cosepdef}, and indeed we do not know if it is possible for the uniform Roe algebra of a bounded geometry metric space to admit a Cartan subalgebra that is not co-separable.  To see co-separability of $B$, let $S_0$ be a countable subset of $\mathcal{N}_{C^*_u(X)}(B)$ that generates $\mathcal{K}(\ell^2(X))$, and with our usual matrix conventions, let $s$ be the element 
\begin{equation}
s:=\prod_{n\geq 1} \begin{pmatrix} 1 & 0 \\ 0 & -1\end{pmatrix}
\end{equation}
of $\ell^\infty(X)$, which normalizes $B$.  Set $S:=S_0\cup \{s\}$.   We claim that $S$ and $B$ together generate $C^*_u(X)$.  By assumption on $S_0$ and line \eqref{roe alg 2}, it suffices to show that the $C^*$-algebra generated by $s$ and $B$ contains $\ell^\infty(X)$.  Let then 
\begin{equation}
\prod_{n\geq 1}  \begin{pmatrix} a_n & 0 \\ 0 & b_n\end{pmatrix}
\end{equation}
be an arbitrary element of $\ell^\infty(X)$, and note that 
\begin{align}
\nonumber\prod_{n\in \N}  \begin{pmatrix} a_n & 0 \\ 0 & b_n\end{pmatrix} & =\frac{1}{2}\prod_{n\in \N}  \begin{pmatrix} a_n+b_n & 0 \\ 0 & a_n+b_n\end{pmatrix} \\ & \quad \quad \quad+s\frac{1}{2}\prod_{n\in \N}  \begin{pmatrix} a_n-b_n & 0 \\ 0 & a_n-b_n\end{pmatrix}~;
\end{align}
as the two products of matrices on the right hand side are in $B$, we are done.
\end{remark}

\begin{remark}\label{not diag}
Recall from \cite{Kumjian:1986aa} and \cite[Page 55]{Renault:2008if} that a Cartan subalgebra $B\subseteq A$ in a $C^*$-algebra is a \emph{$C^*$-diagonal} if every pure state on $B$ extends uniquely to a (necessarily pure) state on $A$.  The usual Cartan subalgebra $\ell^\infty(X)$ in a uniform Roe algebra $C^*_u(X)$ is a $C^*$-diagonal, as is not difficult to check directly (this also follows from \cite[Proposition 5.11]{Renault:2008if}, and the fact that the underlying coarse groupoid is principal).  The exotic Cartan subalgebra of Example \ref{bad cartan ex} is not a $C^*$-diagonal, however.  To see this, fix a non-principal ultrafilter $\omega$ on $\N$, and note that the state on $B$ defined on matrices as in line \eqref{mat prod} above by 
\begin{equation}
\prod_{n\geq 1} \begin{pmatrix} a_n & b_n \\ b_n & a_n \end{pmatrix}\mapsto \lim_{n\to\omega} a_n
\end{equation}
is pure: indeed, the fact that the sequence $(b_n)$ of off-diagonal entries is in $C_0(\N)$ implies that it is a $^*$-homomorphism.  However, it admits two different pure extensions to $C^*_u(X)$: indeed, if $a\in C^*_u(X)$ has diagonal entries given by $a_{m^2~m^2}$, these can be defined by
\begin{equation}
a\mapsto \lim_{n\to\omega} a_{(2n-1)^2~(2n-1)^2} \quad \text{and}\quad a\mapsto \lim_{n\to\omega} a_{(2n)^2~(2n)^2}.
\end{equation}
We do not know if there exist uniform Roe algebras that admit exotic $C^*$-diagonals.
\end{remark}

\section{Abstract coarse structures and Roe Cartan subalgebras}\label{abs cart sec}

Our goal in this section is to prove that $C^*$-algebras containing the compact operators as an essential ideal and a co-separable Cartan masa which is abstractly isomorphic to $\ell^\infty(\mathbb N)$ are essentially the same thing as bounded geometry metric spaces (considered up to bijective coarse equivalence).  Actually, we work in more generality than this, using the language of abstract coarse structures as this seems to give slightly cleaner results.

The following definition is due to Roe \cite[Chapter 2]{Roe:2003rw}.

\begin{definition}\label{cs}
Let $X$ be a set.  A \emph{coarse structure on $X$} is a collection $\mathcal{E}$ of subsets of $X\times X$ such that:
\begin{enumerate}[(i)]
\itemsep-.1cm 
\item for all $E,F\in \mathcal{E}$ the union $E\cup F$ is in $\mathcal{E}$;
\item for all $E,F\in \mathcal{E}$, the composition 
\begin{equation}
E\circ F:=\{(x,z)\in X\times X\mid \exists y\in X \text{ with } (x,y)\in E \text{ and } (y,z)\in F\}
\end{equation}
is in $\mathcal{E}$;
\item for all $E\in \mathcal{E}$, the inverse 
\begin{equation}
E^{-1}:=\{(x,y)\in X\times X\mid (y,x)\in E\}
\end{equation}
is in $\mathcal{E}$;
\item for all $E\in \mathcal{E}$, if $F\subseteq E$, then $F\in \mathcal{E}$;
\item \label{unital cs} $\mathcal{E}$ contains the diagonal $\{(x,x)\in X\times X\mid x\in X\}$.
\end{enumerate}
A set $X$ together with a coarse structure $\mathcal E$ is called a \emph{coarse space}; when it is unlikely to cause confusion, we will leave $\mathcal{E}$ implicit, and just say that $X$ is a coarse space.

A coarse space $(X,\mathcal{E})$ is:
\begin{enumerate}[(a)]
\itemsep-.1cm 
\item \label{bg part} of \emph{bounded geometry} if for all $E\in \mathcal{E}$, the cardinalities of the `slices'
\begin{equation}
E_x:=\{(y,x)\in E\mid y\in X\} \text{ and } E^x:=\{(x,y)\in E\mid y\in X\}
\end{equation}
are bounded independently of $x$;
\item \emph{connected} if for every $x,y\in X$, $\mathcal{E}$ contains $\{(x,y)\}$;
\item \emph{countably generated} if there is a countable collection $S$ of subsets of $X\times X$ such that $\mathcal{E}$ is generated by $S$ (i.e.\ such that $\mathcal{E}$ is the intersection of all coarse structures containing $S$). 
\end{enumerate}
\end{definition}

The basic example of a coarse structure is the \emph{bounded coarse structure} on a metric space $(X,d)$, defined by 
\begin{equation}
\mathcal{E}_d:=\{E\subseteq X\times X\mid d|_{E} \text{ is bounded}\}
\end{equation}
(it is straightforward to check that this is a coarse structure).  A coarse space $(X,\mathcal{E})$ is \emph{metrisable} if there exists a metric $d$ on $X$ such that $\mathcal{E}$ is the associated bounded coarse structure.   Note that the bounded coarse structure associated to a metric has bounded geometry if and only if the metric does in the usual sense of Definition \ref{bg def} above. The bounded coarse structure is connected and generated by the countably many sets 
\begin{equation}
E_n:=\{(x,y)\in X\times X\mid d(x,y)\leq n\},
\end{equation}
for $n\in\mathbb N$, $q\geq 0$. Conversely, one has the following result: see \cite[Theorem 2.55]{Roe:2003rw} for a proof.

\begin{theorem}\label{met the}
A coarse space $X$ is metrizable if and only if it is connected and countably generated. \qed
\end{theorem}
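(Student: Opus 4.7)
\noindent\emph{Proof proposal.}
The forward direction is a routine check. If $\mathcal{E} = \mathcal{E}_d$ is the bounded coarse structure of a metric $d$, then connectedness follows from the fact that the singleton $\{(x,y)\}$ lies in $\mathcal{E}_d$ for any $x,y$, and the sets $\{(x,y) : d(x,y) \leq n\}$ for $n \in \N$ form a countable generating family.

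The substantive direction asks us to manufacture a metric from a countable generating family. The plan is to start with countably many generators $F_1, F_2, \ldots$ and symmetrise: set
\[
G_n := \Delta \cup \bigcup_{i=1}^n (F_i \cup F_i^{-1}),
\]
so each $G_n$ is symmetric, contains the diagonal, and the increasing family $\{G_n\}$ still generates $\mathcal{E}$. The key step is then to form $E_n := G_n^{\circ n}$, the $n$-fold self-composition under $\circ$. A routine verification shows the $E_n$ are symmetric, contain the diagonal, are increasing, satisfy the compatibility $E_n \circ E_m \subseteq E_{n+m}$, and exhaust $\mathcal{E}$ in the sense that every entourage is contained in some $E_n$ (any entourage lies in some composition $G_n^{\circ k}$, which in turn sits inside $E_{\max(n,k)}$ since $G_n \subseteq G_{\max(n,k)}$ and $\Delta \subseteq G_{\max(n,k)}$). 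With this exhaustion in hand, define
\[
d(x,y) := \begin{cases} 0 & x = y, \\ \min\{n \geq 1 : (x,y) \in E_n\} & x \neq y. \end{cases}
\]
Connectedness guarantees $\{(x,y)\} \in \mathcal{E}$ and hence that $d(x,y)$ is always finite; symmetry of $d$ reflects symmetry of the $E_n$; and the triangle inequality is exactly the compatibility $E_n \circ E_m \subseteq E_{n+m}$. The equality $\mathcal{E}_d = \mathcal{E}$ is then immediate: any $E \in \mathcal{E}$ lies in some $E_n$ on which $d \leq n$, and conversely any $E$ with $d|_E \leq N$ is a subset of $E_N \in \mathcal{E}$.

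The main obstacle, and the place where an attempt could easily go wrong, will be arranging the multiplicative ``triangle'' property $E_n \circ E_m \subseteq E_{n+m}$. Merely enumerating the generators in an increasing symmetric sequence does not suffice; one must use the self-compositions $G_n^{\circ n}$ to build stability under composition at precisely the rate needed to produce a genuine metric. Once the right quantitative exhaustion is chosen, the remainder of the argument is bookkeeping.
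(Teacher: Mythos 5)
Your proof is correct. The paper does not prove this statement itself --- it simply cites \cite{Roe:2003rw} --- and your argument (symmetrise and saturate a countable generating family into an increasing exhaustion with $E_n \circ E_m \subseteq E_{n+m}$, then let $d(x,y)$ be the least $n$ with $(x,y)\in E_n$, with connectedness supplying finiteness) is essentially the standard one given there.
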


The following combinatorial lemma (a standard `greedy algorithm' argument) will be used several times below.

\begin{lemma}\label{greedy}
Let $(X,\mathcal{E})$ be a bounded geometry coarse space and $E$ be an element of $\mathcal{E}$.  Then there exists $N\in \N$ and a decomposition 
\begin{equation}
E=\bigsqcup_{n=1}^N E_n
\end{equation}
of $E$ into disjoint subsets such that for each $x\in X$ and each $n$, there is at most one element in each set 
\begin{equation}
E_n\cap\{(x,y)\mid y\in X\} \quad \text{and} \quad E_n\cap \{(y,x)\mid y\in X\}
\end{equation}
(in words, $E_n$ intersects each row and column in $X\times X$ at most once).
\end{lemma}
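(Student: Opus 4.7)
The plan is to recognize the lemma as an edge-colouring statement about a bipartite graph. By the bounded geometry hypothesis applied to $E \in \mathcal{E}$, there exists $K \in \N$ such that $|E_x| \leq K$ and $|E^x| \leq K$ for every $x \in X$. I would view $E$ as the edge set of a bipartite (multi)graph on two disjoint copies of $X$ --- one indexing the first coordinate and one the second --- so that $(x,y) \in E$ corresponds to an edge between the copies of $x$ and $y$. The bounded geometry bound says precisely that this graph has maximum degree at most $K$, and the desired decomposition $E = \bigsqcup_n E_n$ is exactly a partition of the edges into matchings, i.e.\ a proper edge-colouring.

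To construct such a colouring, I would run a greedy algorithm. Set $N := 2K - 1$, fix a well-ordering on $E$ (available by the axiom of choice, since $X$ may be uncountable), and define $c : E \to \{1, \dots, N\}$ by transfinite recursion: for $(x,y) \in E$, assuming $c$ has been defined on all strictly earlier pairs, the previously coloured elements lying in the row $\{x\} \times X$ number at most $|E^x| - 1 \leq K - 1$, and analogously the previously coloured elements in the column $X \times \{y\}$ number at most $|E_y| - 1 \leq K - 1$; hence at most $2K - 2 < N$ colours are forbidden, leaving at least one available. Assign $c(x,y)$ to be any such value, and set $E_n := c^{-1}(n)$ to obtain the required decomposition.

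The point requiring care, rather than a genuine obstacle, is that $X$ can be uncountable in the abstract coarse-space setting, which forces the recursion above to be transfinite rather than ordinary; this is purely bookkeeping. One could alternatively invoke K\"onig's line-colouring theorem for bipartite graphs of bounded degree (extended to the infinite case by a standard compactness or Zorn's-lemma argument) to obtain the sharper constant $N = K$, but the weaker bound is more than sufficient for the subsequent applications.
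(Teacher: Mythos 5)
Your proof is correct. The paper also runs a greedy argument, but organised the other way around: it repeatedly extracts a \emph{maximal} subset of the remaining pairs that meets each row and column at most once, and only afterwards proves termination --- if some $E_N$ were nonempty for every $N$, maximality would force each earlier $E_n$ to contain a pair sharing a row or a column with a chosen $(x_N,y_N)\in E_N$, so some row or column of $E$ would have at least $\lfloor (N-1)/2\rfloor$ elements, contradicting bounded geometry. You instead fix the palette $\{1,\dots,2K-1\}$ in advance from the degree bound $K$ and colour the pairs one at a time by transfinite recursion, with correctness coming from the forbidden-colour count rather than from a termination argument. Your route buys an explicit bound $N\le 2K-1$ (or $N=K$ via K\"onig's line-colouring theorem), while the paper's bound is comparable but implicit; conversely the paper avoids well-ordering $E$ by pushing the use of choice into the selection of maximal subsets --- both uses of choice are equally harmless, and either version gives what the later applications need, namely that $N$ depends only on the bounded-geometry constant of $E$. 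The only pedantic caveat in your write-up is the degenerate case $E=\emptyset$, where $2K-1$ need not be a positive integer; take $N=1$ and $E_1=\emptyset$ there.
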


\begin{proof}
Set $E_0$ to be the empty set.  Having chosen disjoint subsets $E_0,E_1,...,E_n$ of $E$, set $E_{n+1}$ to be a maximal subset of $E\setminus (E_1\cup \cdots \cup E_n)$ that intersects each row and column at most once.  We claim that for some $N$, $E_n$ is empty for all $n\geq N$.  Indeed, if not, then for every $N$, there is some element $(x_N,y_N)$ in $E_N$, and in particular that has not appeared in any of $E_1,...,E_{N-1}$.  Maximality of these sets implies that for each $n\in \{1,...,N-1\}$ there is either $x_n$ such that $(x_n,y_N)$ is in $E_n$, or $y_n$ such that $(x_N,y_n)$ is in $E_n$.  This implies that at least one of the sets 
\begin{equation}
\{(x_N,y)\in E\mid y\in X\} \quad \text{or}\quad \{(x,y_N)\in E\mid x\in X\}
\end{equation}
has cardinality at least $\lfloor (N-1)/2\rfloor$.  As this happens for all $N$, this contradicts that $(X,\mathcal{E})$ has bounded geometry.
\end{proof}

We now turn to bounded operators.  We start with a basic class of operators.  

\begin{definition}\label{sd}
Let $\{\xi_i\}_{i\in I}$ be an orthonormal basis for a Hilbert space $H$.  For any bounded operator $a$ on $H$, let $a_{ij}:=\langle \xi_i,a\xi_j\rangle$ be the corresponding matrix entries.  We will say that a matrix $(a_{ij})$, or the operator defining it (if one exists) is \emph{supported on a single diagonal (with respect to $\{\xi_i\}_{i\in I}$)} if for each $i$ there is at most one $j$ such that $a_{ij}\neq 0$, and at most one $k$ such that $a_{ki}\neq 0$ (in words, $a$ has at most one non-zero matrix entry in each row and column).
\end{definition}

The following elementary lemma is well-known.

\begin{lemma}\label{linf normal}
Let $\{\xi_i\}_{i\in I}$ be an orthonormal basis for $H$, and let $\{p_i\}_{i\in I}$ be the corresponding complete set of orthogonal rank one projections.  
\begin{enumerate}[(i)]
\item Let $B\subseteq \mathcal{B}(H)$ be a $C^*$-algebra such that 
\begin{equation}
C^*(\{p_i\})\subseteq B\subseteq W^*(\{p_i\}).
\end{equation}
Then if $a\in \mathcal{B}(H)$ normalises $B$, we have that $a$ is supported on a single diagonal with respect to the basis $\{\xi_i\}_{i\in I}$.  If $B=W^*(\{p_i\})$ and $a$ is supported on a single diagonal, then $a$ normalises $B$.
\item Let $(a_{ij})_{i,j\in I}$ be a matrix supported on a single diagonal (not necessarily coming from a bounded operator).  Then matrix multiplication by $(a_{ij})$ defines a bounded operator $a$ if and only if its matrix entries are uniformly bounded, and in this case, $\|a\|=\sup_{i,j}|a_{ij}|$.   
\end{enumerate}
\end{lemma}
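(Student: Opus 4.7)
The plan is to handle the two parts independently, both via direct matrix computations, using the basic fact that $W^*(\{p_i\})$ is exactly the algebra of operators that are diagonal in the basis $\{\xi_i\}_{i\in I}$.

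For the forward direction of (i), the key observation is that each $p_j$ lies in $C^*(\{p_i\})\subseteq B$, so if $a$ normalises $B$ then $ap_ja^*$ and $a^*p_ia$ both lie in $B\subseteq W^*(\{p_i\})$ and are therefore diagonal. A one-line calculation gives
\[
(ap_ja^*)_{ik}=a_{ij}\overline{a_{kj}}\qquad\text{and}\qquad (a^*p_ia)_{jk}=a_{ik}\overline{a_{ij}},
\]
so demanding the first to vanish for all $i\neq k$ forces each column of $a$ (indexed by $j$) to contain at most one nonzero entry, while the analogous demand on the second forces the same for rows. Together these give the single-diagonal condition.

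For the converse direction of (i), assume $B=W^*(\{p_i\})$ and that $a$ is supported on a single diagonal. Then $a$ implements a partial injection $\sigma:I\rightharpoonup I$ via $a\xi_j=a_{\sigma(j),j}\xi_{\sigma(j)}$ on the domain of $\sigma$, and $a\xi_j=0$ otherwise. For any diagonal $d\in B$ with $d\xi_i=d_i\xi_i$, one directly computes
\[
ada^*\xi_k=\bigl|a_{k,\sigma^{-1}(k)}\bigr|^2\,d_{\sigma^{-1}(k)}\,\xi_k
\]
for $k\in\mathrm{ran}(\sigma)$ and $0$ elsewhere, which is again diagonal, hence lies in $W^*(\{p_i\})=B$. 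The same calculation with the roles of $a$ and $a^*$ swapped takes care of $a^*Ba\subseteq B$.

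For part (ii), the inequality $\|a\|\geq|a_{ij}|$ is immediate from $\|a\xi_j\|\geq|a_{ij}|$, so the matrix cannot represent a bounded operator unless $C:=\sup_{i,j}|a_{ij}|<\infty$, and in that case $\|a\|\geq C$. For the matching upper bound, the single-diagonal hypothesis gives a partial function $\tau$ with $a_{i,\tau(i)}\neq 0$ and injective domain, so for $\xi=\sum_j x_j\xi_j$ the coordinates $\{x_{\tau(i)}\}_i$ appear without repetition and Pythagoras yields $\|a\xi\|^2=\sum_i|a_{i,\tau(i)}|^2|x_{\tau(i)}|^2\leq C^2\|\xi\|^2$. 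There is no genuine obstacle in either part; the only place one needs some care is in consistently bookkeeping the direction of the partial bijections when taking adjoints.
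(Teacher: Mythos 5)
Your proof is correct and follows essentially the same route as the paper: the same matrix-entry computation of $ap_ja^*$ and $a^*p_ia$ for the forward direction of (i), and the same orthogonality/Pythagoras argument for part (ii). The only minor difference is that for the converse in (i) you verify directly that $ada^*$ is diagonal for an arbitrary diagonal $d\in B$, whereas the paper checks this only on the generators $p_i$ and appeals to von Neumann generation; your version is marginally more self-contained but not a genuinely different argument.
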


\begin{proof}
For each $i\in I$ and $a\in \mathcal{B}(H)$, the operators $ap_ia^*$ and $a^*p_ia$ have matrix entries given by 
\begin{equation}
(ap_ia^*)_{jk}=a_{ji}\overline{a_{ki}} \quad \text{and} \quad (a^*p_ia)_{jk}=\overline{a_{ij}}a_{ik}
\end{equation}
respectively.  As $B\subseteq W^*(\{p_i\})$, in order for these operators to be in $B$ for some fixed $i$ the entries can only be non-zero if $j=k$, which can only happen if $a$ has at most one non-zero entry in each row and column.  

For the second part of the statement, note that the computation above shows that if $a$ is supported on a single diagonal, then $ap_ia^*$ and $a^*p_ia$ are in $B$ for each $i$.  As we are now assuming that $B$ is generated by the $p_i$ (as a von Neumann algebra), this completes the proof.

For part (ii), assume $a$ is supported on a single diagonal.  Note that $a\xi_i=a_{t(i)i}\xi_{t(i)}$, where $t(i)$ is the unique element of $I$ such that $a_{t(i)i}\neq 0$, or $a\xi_i=0$ if no such $t(i)$ exists.  Moreover, if $i\neq j$, then $a\xi_i$ is orthogonal to $a\xi_j$.  Hence for any element $v:=\sum_{i\in I}\lambda_i \xi_i$ of $H$,  
\begin{align}
\nonumber\|av\|^2 & =\sum_{i\in I,t(i) \text{ exists}}\|a_{t(i)i}\lambda_iv_{t(i)}\|^2\leq \sup_{i\in I,t(i)\text{ exists}}|a_{it(i)}|^2\sum_{i\in I}|\lambda_i|^2 \\ & =\sup_{i,j}|a_{ij}|^2\|v\|^2.
\end{align}
This gives $\|a\|\leq \sup_{i,j}|a_{ij}|$; the opposite inequality follows as $\|a\|\geq |\langle \xi_i,a\xi_j\rangle|$ for any $i,j$.
\end{proof}

As a special case of Definition \ref{sd}, we equip $\ell^2(X)$ with its canonical orthonormal basis $\{\delta_x\}_{x\in X}$, so the matrix entries of a bounded operator $a$ on $\ell^2(X)$ are $a_{xy}:=\langle \delta_x,a\delta_y\rangle$.  It is routine to extend the definition of uniform Roe algebras from metric spaces to general coarse spaces of bounded geometry; we set out the details below.

\begin{definition}\label{fin prop}
Let $(X,\mathcal{E})$ be a coarse space.  The \emph{support} of an operator $a\in \mathcal{B}(\ell^2(X))$ is 
\begin{equation}
\text{supp}(a):=\{(x,y)\in X\times X\mid a_{xy}\neq 0\}.
\end{equation}
An operator $a\in \mathcal{B}(\ell^2(X))$ has \emph{finite propagation} if its support is an element of $\mathcal{E}$.
\end{definition}
From the axioms for a coarse structure, it is not difficult to check that the collection of finite propagation operators is a $^*$-algebra, leading to the following definition.

\begin{definition}\label{u roe}
Let $(X,\mathcal{E})$ be a coarse space.  Let  $\C_u[X;\mathcal{E}]$ denote the $^*$-algebra of finite propagation operators on $\mathcal{B}(\ell^2(X))$ as in Definition \ref{fin prop}.  The \emph{uniform Roe algebra of $X$}, denoted $C^*_u(X;\mathcal{E})$, is the norm closure of $\C_u[X;\mathcal{E}]$.
\end{definition}

Note that if a coarse space $(X,\mathcal{E})$ is metrisable, then $C^*_u(X;\mathcal{E})$ is the usual uniform Roe algebra associated to any choice of metric $d$ on $X$ which induces $\mathcal E$.  Note also that condition \eqref{unital cs} in Definition \ref{cs} above implies that the multiplication operators $\ell^\infty(X)\subseteq \mathcal{B}(\ell^2(X))$ are contained in $\C_u[X;\mathcal{E}]$.  

The following special class of operators in $\C_u[X;\mathcal{E}]$ will be useful for us.

\begin{definition}\label{p isoms}
Let $(X,\mathcal{E})$ be a bounded geometry coarse space.  For any $E\in \mathcal{E}$ with at most one entry in each row and column, define a matrix $(v^E_{xy})$ by the formula 
\begin{equation}
v^E_{xy}:=\left\{\begin{array}{ll} 1 & (x,y)\in E \\ 0 & (x,y)\not\in E\end{array}\right..
\end{equation}
Let $v^E$ denote the unique bounded operator on $\ell^2(X)$ associated to this matrix by  Lemma \ref{linf normal} part (ii).
\end{definition}

We now have the following useful structure lemma for $C^*_u(X;\mathcal{E})$ that holds whenever $X$ has bounded geometry.

\begin{lemma}\label{u roe struc}
With notation as in Definition \ref{p isoms}, $v^E$ is a partial isometry in $\C_u[X;\mathcal{E}]$ that normalizes $\ell^\infty(X)$.  Moreover, if $S\subseteq \mathcal{E}$ is a collection of subsets of $X\times X$, each with at most one entry in each row and column, and that generates the coarse structure, then the collection 
\begin{equation}
\{v^E\mid E\in S\}\cup \ell^\infty(X)
\end{equation}
generates $\C_u[X;\mathcal{E}]$ as a $^*$-algebra (and therefore generates $C^*_u(X;\mathcal{E})$ as a $C^*$-algebra).
\end{lemma}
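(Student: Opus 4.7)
The plan is first to verify the easy assertions about $v^E$ by direct matrix manipulation, and then to reduce the generation statement to showing that $v^F$ belongs to the $*$-algebra $\mathcal{A}$ generated by $\{v^E:E\in S\}\cup\ell^\infty(X)$ for \emph{every} $F\in\mathcal{E}$ with at most one entry per row and column. The reduction uses Lemma \ref{greedy}: any $a\in\C_u[X;\mathcal{E}]$ has $\mathrm{supp}(a)=F\in\mathcal{E}$, and decomposing $F=F_1\sqcup\cdots\sqcup F_N$ into finitely many sets each with at most one entry per row and column lets us write $a=\sum_n f_n v^{F_n}$, where $f_n\in\ell^\infty(X)$ is defined by $(f_n)_x=a_{x,t_n(x)}$ for $t_n(x)$ the unique (if any) element with $(x,t_n(x))\in F_n$.

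For the basic properties: by Lemma \ref{linf normal}(ii), $v^E$ defines a bounded operator of norm $\leq 1$ in $\C_u[X;\mathcal{E}]$; the matrix computation $(v^E(v^E)^*)_{xz}=\sum_y v^E_{xy}\overline{v^E_{zy}}$ is nonzero only when there is $y$ with $(x,y),(z,y)\in E$, forcing $x=z$ by the at-most-one-per-column condition, so $v^E(v^E)^*$ is the diagonal projection onto $\{x:\exists y,\,(x,y)\in E\}$; an identical calculation gives $(v^E)^*v^E$ diagonal, and $v^E(v^E)^*v^E=v^E$ then shows $v^E$ is a partial isometry. The same `forced-diagonal' computation with a diagonal $f$ inserted shows $v^Ef(v^E)^*\in\ell^\infty(X)$ and similarly $(v^E)^*fv^E\in\ell^\infty(X)$, so $v^E$ normalizes $\ell^\infty(X)$.

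For the generation claim, let $\mathcal{T}$ be the collection of subsets $F$ of $X\times X$ such that $F$ has at most one entry per row and column and $v^F\in\mathcal{A}$. I claim $\mathcal{T}$ contains $S\cup\{\Delta\}$ (trivial, since $v^\Delta=1$), and is closed under the operations of (i) inverse, via $v^{F^{-1}}=(v^F)^*$; (ii) composition, since if $F_1,F_2$ each have at most one entry per row and column then so does $F_1\circ F_2$ (a direct check: two rows or two columns would pull back to a common intermediate $y$ and contradict the hypothesis on $F_1$ or $F_2$), and the resulting $v^{F_1\circ F_2}$ then coincides with the matrix product $v^{F_1}v^{F_2}$; (iii) subset, since if $F'\subseteq F\in\mathcal{T}$ and $p\in\ell^\infty(X)$ is the characteristic function of $\{x:\exists y,\,(x,y)\in F'\}$, then $v^{F'}=p\,v^F\in\mathcal{A}$; and (iv) disjoint union (when the union still has the entry property), since $v^{F_1\sqcup F_2}=v^{F_1}+v^{F_2}$.

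Finally, a routine verification shows that the collection of subsets of finite unions of finite compositions of elements of $S\cup S^{-1}\cup\{\Delta\}$ is closed under the four coarse structure operations and contains $S$, hence equals the coarse structure $\mathcal{E}$ generated by $S$. So any $F\in\mathcal{E}$ with at most one entry per row and column sits inside some $E_1\cup\cdots\cup E_m$ where each $E_i$ is a finite composition of elements of $S\cup S^{-1}\cup\{\Delta\}$; closure of $\mathcal{T}$ under inverse and composition gives $E_i\in\mathcal{T}$, and setting $F_i':=(F\cap E_i)\setminus(F_1'\cup\cdots\cup F_{i-1}')$ writes $F$ as a disjoint union of sets $F_i'\subseteq E_i\subseteq F$. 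Each $F_i'$ lies in $\mathcal{T}$ by the subset closure, and iterating the disjoint-union closure (each partial union is a subset of $F$, so has the entry property) puts $F$ in $\mathcal{T}$, completing the proof. The main obstacle is purely organizational: setting up $\mathcal{T}$ and verifying that the coarse structure generated by $S$ admits the `subsets of unions of compositions' description so that the induction actually terminates in finitely many steps.
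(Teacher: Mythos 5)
Your proof is correct and follows essentially the same route as the paper's: both reduce to the description of the generated coarse structure as subsets of finite unions of finite compositions of elements of $S\cup S^{-1}$, and translate inverse, composition, subset, and disjoint union into $(v^E)^*$, products $v^Ev^F$, cut-downs by characteristic functions in $\ell^\infty(X)$, and finite sums respectively (the paper organises this via general operators with supports stratified into $S_1,S_2,S_3$ rather than first reducing to the $v^F$ via Lemma \ref{greedy}, but this is cosmetic). One harmless slip: the chain ``$F_i'\subseteq E_i\subseteq F$'' should read $F_i'\subseteq E_i$ and $F_i'\subseteq F$ separately, since the $E_i$ need not be contained in $F$; both containments you actually use are correct.
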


\begin{proof}
That each $v^E$ is a partial isometry in $\C_u[X;\mathcal{E}]$ follows from straightforward computations, and each normalises $\ell^\infty(X)$ by Lemma \ref{linf normal} part (i).  

Let $S$ be a collection of sets as in the statement.  We consider the following collections of subsets of $X\times X$: $S_1$ consists of all compositions of the form $E_1\circ \cdots \circ E_n$ where for each $i$, either $E_i$ or its inverse is in $S$; $S_2$ consists of all subsets of elements of $S_1$; $S_3$ consists of all finite disjoint unions of sets from $S_2$.  As $S$ generates $\mathcal{E}$, it is not too difficult to see that in fact $S_3=\mathcal{E}$.  

Consider now the $^*$-subalgebra $\mathcal{A}$ of $\C_u[X;\mathcal{E}]$ generated by $\{v^E\mid E\in S\}\cup \ell^\infty(X)$.  We first claim that if the support of $a\in \C_u[X;\mathcal{E}]$ is contained in $S_1$, then $a$ is in $\mathcal{A}$.  Indeed, if $E$, $F$ are elements of $S$, then $v^{E^{-1}}=(v^E)^*$ and $v^{E\circ F}=v^Ev^F$, from which it follows that if $E\in S_1$, then $v^E$ is in $\mathcal{A}$.  Any $a\in \C[X;\mathcal{E}]$ with support an element of $S_1$ can be written as $fv^E$ for some $f\in \ell^\infty(X)$ and $E\in S_1$: indeed, take $E$ to be the support of $a$, and define $f$ by 
\begin{equation}
f(x):=\left\{\begin{array}{ll} a_{xy} & \exists y\in X \text{ with } (x,y)\in E \\ 0 & \text{otherwise}\end{array}\right. .
\end{equation}
This completes the proof of the claim.

We next claim that if $a\in \C_u[X;\mathcal{E}]$ has support in $S_2$, then $a$ is in $\mathcal{A}$.  Indeed, let $F$ be an element of $S_1$ containing the support $E$ of $a$, and define $b\in \C_u[X;\mathcal{E}]$ to have the matrix that agrees with that of $a$ on $(X\times X)\setminus F$ and on $\text{supp}(a)$, and has all entries in $F\setminus \text{supp}(a)$ equal to one.  Then the support of $b$ is in $F$, which is in $S_1$, and so $b$ is in $\mathcal{A}$ by our earlier claim.  Moreover, $a=fb$, where $f\in \ell^\infty(X)$ is the characteristic function of the set $\{x\in X\mid (x,y)\in E \text{ for some } y \}$, so we are done with this claim.

Finally, we claim that if $a\in \C_u[X;\mathcal{E}]$ has support in $S_3$, then $a$ is in $\mathcal{A}$; as $S_3=\mathcal{E}$, this will suffice to complete the proof.  For this, write the support $E$ of $a$ as a finite disjoint union $E=\bigsqcup_{i=1}^n E_i$ with each $E_i$ in $S_2$.  Then $a=\sum_{i=1}^n b_i$, where the matrix of each $b_i$ is defined to agree with the matrix of $a$ on $E_i$, and to be zero on $(X\times X)\setminus E_i$.  In particular $b_i$ is in $\mathcal{A}$ by the previous claim, and we are done.
\end{proof}

Just as in the case of metric spaces, the canonical copy of $\ell^\infty(X)$ forms a Cartan masa in the uniform Roe algebra.

\begin{proposition}\label{l inf is cartan}
Let $(X,\mathcal{E})$ be a bounded geometry coarse structure.  Then $\ell^\infty(X)$ is a Cartan subalgebra in $C^*_u(X;\mathcal{E})$.  Moreover, if $X$ is connected, then $C^*_u(X;\mathcal{E})$ contains the compact operators.
\end{proposition}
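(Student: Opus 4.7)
The plan is to verify the four axioms for a Cartan subalgebra from Definition \ref{cart def} in turn, and then handle the connectedness claim separately.

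First, I would observe that $\ell^\infty(X)$ is maximal abelian in $\mathcal{B}(\ell^2(X))$ (a standard fact: it is the commutant of itself under the canonical basis), and so it is maximal abelian inside the smaller algebra $C^*_u(X;\mathcal{E})$ as well. Since $1 \in \ell^\infty(X) \subseteq C^*_u(X;\mathcal{E})$, this also handles the approximate unit condition. For the conditional expectation, I would take the canonical diagonal map $E : \mathcal{B}(\ell^2(X)) \to \ell^\infty(X)$ defined by $E(a)(x) = a_{xx}$. This is the standard faithful normal conditional expectation (its restriction to any $C^*$-subalgebra containing $\ell^\infty(X)$ remains a conditional expectation, and faithfulness follows from the matrix formula $E(a^*a)(x) = \sum_{y \in X} |a_{yx}|^2$, so vanishing of $E(a^*a)$ forces $a=0$). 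The only content here is to check that $E$ sends finite propagation operators into $\ell^\infty(X)$, which is immediate since the diagonal of a bounded operator is always bounded and gives an element of $\ell^\infty(X)$; the restriction to $C^*_u(X;\mathcal{E})$ follows by norm continuity.

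The main step is verifying that $\mathcal{N}_{C^*_u(X;\mathcal{E})}(\ell^\infty(X))$ generates $C^*_u(X;\mathcal{E})$. For this I would invoke Lemma \ref{u roe struc}: it suffices to produce a generating family $S$ for $\mathcal{E}$ consisting of sets with at most one entry in each row and column, and observe that the associated partial isometries $v^E$ lie in the normaliser (by Lemma \ref{linf normal}(i)). To produce such an $S$, I would apply Lemma \ref{greedy}: every $E \in \mathcal{E}$ decomposes into finitely many disjoint pieces $E_1, \ldots, E_N$, each a ``single-diagonal'' subset of $E$, and each $E_n$ lies in $\mathcal{E}$ by the hereditary axiom. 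Taking $S$ to be the collection of all such pieces, $S$ generates $\mathcal{E}$. Together with $\ell^\infty(X)$ itself (which lies in its own normaliser as it is abelian), the $v^E$ generate $C^*_u(X;\mathcal{E})$ by Lemma \ref{u roe struc}.

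For the final claim, assume $X$ is connected. Then by definition, for each pair $x,y \in X$ the singleton $\{(x,y)\}$ is in $\mathcal{E}$, and it trivially has at most one entry per row and column. The associated operator $v^{\{(x,y)\}}$ from Definition \ref{p isoms} is precisely the rank-one matrix unit $e_{xy}$ sending $\delta_y \mapsto \delta_x$, and it lies in $\mathbb{C}_u[X;\mathcal{E}]$. The linear span of $\{e_{xy} : x, y \in X\}$ is the set of finite-rank operators, whose norm closure is $\mathcal{K}(\ell^2(X))$, so $\mathcal{K}(\ell^2(X)) \subseteq C^*_u(X;\mathcal{E})$. I expect no real obstacles; the only mildly delicate point is ensuring that the chosen generating family for the normaliser condition genuinely covers all of $\mathcal{E}$, but Lemma \ref{greedy} makes this automatic from bounded geometry.
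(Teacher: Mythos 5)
Your proof is correct and follows essentially the same route as the paper: maximal abelianness and the conditional expectation are inherited from $\mathcal{B}(\ell^2(X))$, the normaliser condition comes from Lemma \ref{u roe struc}, and connectedness gives the matrix units $v^{\{(x,y)\}}$ generating the compacts. The only difference is that you explicitly supply the generating family of single-diagonal sets via Lemma \ref{greedy} before invoking Lemma \ref{u roe struc}, a detail the paper leaves implicit; this is a reasonable (and correct) bit of extra care rather than a different argument.
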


\begin{proof}
It is well-known that $\ell^\infty(X)$ is a unital, maximal abelian $C^*$-subalgebra of $\mathcal{B}(\ell^2(X))$ that is the image of a faithful conditional expectation $\mathcal{B}(\ell^2(X))\to \ell^\infty(X)$, so it certainly also has these properties when considered as a $C^*$-subalgebra of $C^*_u(X;\mathcal{E})$.  The normaliser of $\ell^\infty(X)$ generates $C^*_u(X;\mathcal{E})$ by Lemma \ref{u roe struc}, completing the proof that $\ell^\infty(X)$ is a Cartan subalgebra.  Assuming that $X$ is connected, then with notation as in Definition \ref{p isoms} we get that for any $(x,y)\in X\times X$ the operator $v^{\{(x,y)\}}$ is in $\C_u[X;\mathcal{E}]$.  These operators generate the compact operators, so we are done.
\end{proof}

To summarise, given a connected coarse space $(X,\mathcal E)$ of bounded geometry, $\ell^\infty(X)\subseteq C^*_u(X,\mathcal E)$ is a Cartan pair with the compact operators contained in $C^*_u(X,\mathcal E)$.  Our main goal of this section is to prove a sort of converse.

\begin{definition}\label{cart coarse def}
Let $A$ be a unital $C^*$-algebra containing a copy $\mathcal{K}$ of the compact operators on some Hilbert space as an essential ideal, and let $B\subseteq A$ be a Cartan subalgebra.  Let $X=\{p_x\}_{x\in X}$ be the set\footnote{We treat $X$ as its own index set; apologies for this abuse of notation.  It is non-empty, as we will see in Remark \ref{rep rem}.} of minimal projections in $B$, and for each $a\in \mathcal{N}_A(B)$ and each $\epsilon>0$, define 
\begin{equation}
E_{a,\epsilon}:=\{(x,y)\in X\times X\mid \|p_xap_y\|\geq \epsilon\}.
\end{equation}
Define $\mathcal{E}_A$ to be the coarse structure on $X$ generated by the collection 
\begin{equation}
\{E_{a,\epsilon}\mid a\in \mathcal{N}_A(B)\text{ and }\epsilon>0\}. 
\end{equation}
\end{definition}

\begin{remark}\label{rep rem}
With notation as in Definition \ref{cart coarse def}, fix a faithful irreducible representation of $\mathcal{K}$ on some Hilbert space $H$; such a representation exists, is unique up to unitary equivalence, and necessarily consists of an isomorphism $\mathcal{K}\cong \mathcal{K}(H)$ of $\mathcal{K}$ with the compact operators on $H$ (see for example \cite[Section 4.1]{Dixmier:1977vl}).  As $\mathcal{K}$ is an essential ideal in $A$, this representation extends uniquely to a representation of $A$ on $H$, which is also irreducible and faithful.  Identify $A$ with its image under this representation.  We may now apply Theorem \ref{cart com the}: this implies in particular that the set $\{p_x\}_{x\in X}$ of minimal projections in $B$ identifies with a complete collection of orthogonal rank one projections on $H$, and that 
\begin{equation}\label{cart incl}
C^*(\{p_x\})\subseteq B\subseteq W^*(\{p_x\}).
\end{equation}
\end{remark}

\begin{lemma}\label{cart coarse lem}
With notation as in Definition \ref{cart coarse def}, the coarse space $(X,\mathcal{E}_A)$ is connected and has bounded geometry.
\end{lemma}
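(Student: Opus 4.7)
The plan is to pass to the spatial picture where everything can be read off from matrix entries, and then reduce both conclusions to properties of the generators $E_{a,\epsilon}$. By Remark \ref{rep rem} we may fix a faithful irreducible representation of $A$ on a Hilbert space $H$ under which $\{p_x\}_{x\in X}$ becomes a complete family of orthogonal rank-one projections and $C^*(\{p_x\})\subseteq B\subseteq W^*(\{p_x\})$. Choose unit vectors $\xi_x\in p_xH$ with $p_x=\xi_x\xi_x^*$, so that each $a\in A\subseteq\mathcal B(H)$ has matrix entries $a_{xy}=\langle\xi_x,a\xi_y\rangle$ and $\|p_xap_y\|=|a_{xy}|$.

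For bounded geometry, I would first record the routine observation that the class of subsets of $X\times X$ with uniformly bounded row-slice and column-slice sizes (where the bound may depend on the set) is closed under taking subsets, inverses, finite unions, and compositions; the composition estimate is $|(E\circ F)^x|\leq |E^x|\cdot\sup_y|F^y|$, and the others are obvious. So it suffices to bound the slices of the generators $E_{a,\epsilon}$. But for any $a\in\mathcal N_A(B)$, Lemma \ref{linf normal}(i) applied in the representation on $H$ says that $a$ is supported on a single diagonal, i.e.\ its matrix has at most one nonzero entry in each row and column. Via $\|p_xap_y\|=|a_{xy}|$, the set $E_{a,\epsilon}$ inherits this property, and in fact every slice of $E_{a,\epsilon}$ has at most one element. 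Bounded geometry follows.

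For connectedness, fix $x,y\in X$ and consider the rank-one partial isometry $u_{xy}:=\xi_x\xi_y^*\in\mathcal K(H)\subseteq A$. I claim $u_{xy}\in\mathcal N_A(B)$. Because $p_y\in B$ is minimal in the abelian algebra $B$, the corner $p_yBp_y$ equals $\mathbb{C}p_y$, so for any $b\in B$ there is a scalar $b_y$ with $p_ybp_y=b_yp_y$; then
$$u_{xy}bu_{xy}^* = u_{xy}p_ybp_yu_{xy}^* = b_y\,u_{xy}p_yu_{xy}^* = b_yp_x\in B,$$
and symmetrically $u_{xy}^*Bu_{xy}\subseteq B$. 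Since $\|p_xu_{xy}p_y\|=\|u_{xy}\|=1$, the singleton $\{(x,y)\}$ lies inside $E_{u_{xy},1/2}\in\mathcal E_A$, and hence inside $\mathcal E_A$ by the subset axiom for coarse structures.

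The main obstacle is not any one computation but rather the conceptual bridge: one needs the abstract Cartan pair $B\subseteq A$ to be visible spatially on $H$, with $B$ squeezed between $C^*(\{p_x\})$ and $W^*(\{p_x\})$. That is exactly the content of Remark \ref{rep rem} combined with Theorem \ref{cart com the}, and once this picture is in place the rest of the lemma amounts to short matrix bookkeeping together with the single-diagonal structure of normalizers given by Lemma \ref{linf normal}(i).
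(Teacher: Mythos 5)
Your proof is correct and follows essentially the same route as the paper: both pass to the spatial picture of Remark \ref{rep rem}, use the single-diagonal structure of normalisers from Lemma \ref{linf normal}(i) to see that each generator $E_{a,\epsilon}$ meets every row and column at most once, and deduce bounded geometry from the stability of slice bounds under the coarse-structure operations (the paper phrases this as ``$\mathcal{E}_A$ consists of finite unions of single-diagonal sets,'' which is a minor variant of your containment in the bounded-slice coarse structure). Your connectedness argument is exactly the ``direct calculation'' the paper alludes to for showing the rank-one partial isometries $\xi_x\xi_y^*$ normalise $B$.
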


\begin{proof}
Fix a representation $H$ of $A$ with the properties in Remark \ref{rep rem}.  For each $x$, choose a unit vector $\xi_x$ in the range of $p_x$, so the collection $\{\xi_x\}_{x\in X}$ is an orthonormal basis for $H$.  Use this basis to write operators on $H$ as matrices $(a_{xy})_{x,y\in X}$ as in Definition \ref{sd}.  Note that $|a_{xy}|=\|p_xap_y\|$ for any $x,y\in X$.

Now, as $A$ contains the compact operators, for any $(x,y)\in X\times X$, the operator $v^{\{(x,y)\}}$ whose matrix has a single entry equal to one in the $(x,y)^{\text{th}}$ position and zeros elsewhere is in $A$, and is moreover in $\mathcal{N}_A(B)$ by a direct calculation (just as in Lemma \ref{linf normal} part (i)); this implies that $\{(x,y)\}$ is in $\mathcal{E}_A$, and thus the coarse space $X$ is connected.

Let $S$ be the collection of all elements $E$ of $\mathcal{E}_A$ such that $E$ has at most one element in each row and column.  Then Lemma \ref{linf normal} part (ii) implies that each $E_{a,\epsilon}$ is in $S$ as $a$ ranges over $\mathcal{N}_A(B)$ and $\epsilon$ over $(0,\infty)$, whence $S$ generates $\mathcal{E}_A$.  Note that $S$ is closed under all the operations defining a coarse structure, except (possibly) unions.  It follows that $\mathcal{E}_A$ consists precisely of finite unions of sets from $S$, and thus has bounded geometry.
\end{proof}

\begin{lemma}\label{contained in roe}
With notation as in Definition \ref{cart coarse def}, identify $A$ with its image in some representation on a Hilbert space $H$ with the properties in Remark \ref{rep rem}.  For each $x\in X$, choose a unit vector $\xi_x$ in the range of $p_x$, so $\{\xi_x\}_{x\in X}$ is an orthonormal basis of $H$, and define a unitary isomorphism
\begin{equation}
u: \ell^2(X)\to H,\quad \delta_x\mapsto \xi_x.
\end{equation}
Consider $C^*_u(X;\mathcal{E}_A)$ and its Cartan subalgebra $\ell^\infty(X)$ as represented on $\ell^2(X)$ in the canonical way.  Then $u^*Bu$ is contained in $\ell^\infty(X)$, and $u^*Au$ is contained in $C^*_u(X;\mathcal{E}_A)$.
\end{lemma}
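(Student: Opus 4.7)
The plan is to prove the two inclusions separately, with the first being immediate from Theorem \ref{cart com the} and the second following from the fact that every normaliser of $B$ is supported on a single diagonal and can therefore be approximated in norm by finite-propagation truncations.

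For the inclusion $u^*Bu\subseteq\ell^\infty(X)$, Theorem \ref{cart com the} (via Remark \ref{rep rem}) gives $B\subseteq W^*(\{p_x\}_{x\in X})$. Under the unitary $u$, each minimal projection $p_x$ is carried to the rank one projection onto $\delta_x$, i.e.\ the multiplication operator $\chi_{\{x\}}\in\ell^\infty(X)$. Since von Neumann algebras generated by a complete orthogonal family of rank one projections are determined by that family, $u^*W^*(\{p_x\})u=W^*(\{\chi_{\{x\}}\})=\ell^\infty(X)$, so $u^*Bu\subseteq\ell^\infty(X)$.

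For the inclusion $u^*Au\subseteq C^*_u(X;\mathcal{E}_A)$, because $A$ is generated as a $C^*$-algebra by $\mathcal N_A(B)$, and because $C^*_u(X;\mathcal{E}_A)$ is norm closed, it suffices to show $u^*au\in C^*_u(X;\mathcal{E}_A)$ for each $a\in \mathcal N_A(B)$. Fix such an $a$. Using the basis $\{\xi_x\}$, Lemma \ref{linf normal}(i) (applied to the inclusion $C^*(\{p_x\})\subseteq B\subseteq W^*(\{p_x\})$ provided by Theorem \ref{cart com the}) shows that the matrix $(a_{xy})=(\langle\xi_x,a\xi_y\rangle)$ is supported on a single diagonal. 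Transporting back through $u$, the matrix of $u^*au$ in the standard basis of $\ell^2(X)$ is the same matrix $(a_{xy})$, and satisfies $|a_{xy}|=\|p_xap_y\|$.

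Given $\epsilon>0$, define an operator $a_\epsilon$ on $\ell^2(X)$ by the matrix
\begin{equation*}
(a_\epsilon)_{xy}:=\begin{cases} a_{xy} & \text{if } |a_{xy}|\geq\epsilon, \\ 0 & \text{otherwise}.\end{cases}
\end{equation*}
Since $a_\epsilon$ is still supported on a single diagonal with bounded entries, Lemma \ref{linf normal}(ii) shows $a_\epsilon$ is a bounded operator, and its support is exactly $E_{a,\epsilon}\in\mathcal E_A$; hence $a_\epsilon\in\C_u[X;\mathcal E_A]$. The difference $u^*au-a_\epsilon$ is also supported on a single diagonal, with all entries of modulus strictly less than $\epsilon$, so by Lemma \ref{linf normal}(ii) again $\|u^*au-a_\epsilon\|\leq\epsilon$. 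Letting $\epsilon\to 0$ places $u^*au$ in the norm closure $C^*_u(X;\mathcal E_A)$ of $\C_u[X;\mathcal E_A]$, finishing the proof.

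The only mildly subtle point is organisational: one must apply Lemma \ref{linf normal} both to force the single-diagonal support (which is what makes the norm equal the sup of entries and hence makes the truncation argument work) and to control the norm of the truncations and their error. There is no real obstacle once these facts are in hand.
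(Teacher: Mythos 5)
Your proof is correct and follows essentially the same route as the paper: the first inclusion via $u^*W^*(\{p_x\})u=\ell^\infty(X)$ and the containment $B\subseteq W^*(\{p_x\})$ from Theorem \ref{cart com the}, and the second via the single-diagonal structure of normalisers (Lemma \ref{linf normal}) and the $\epsilon$-level truncation whose support is exactly the generator $E_{a,\epsilon}$ of $\mathcal{E}_A$. No gaps.
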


\begin{proof}
Note that $u^*p_xu$ is the orthogonal projection onto the span of $\delta_x$, whence $u^*(W^*(\{p_x\}))u=\ell^\infty(X)$.  Hence by line \eqref{cart incl} above, $uBu^*\subseteq \ell^\infty(X)$.  To see that $uAu^*\subseteq C^*_u(X;\mathcal{E}_A)$, it suffices to show that $u^*\mathcal{N}_A(B)u$ is contained in $C^*_u(X;\mathcal{E}_A)$.  Let then $a$ be an element of $\mathcal{N}_A(B)$ and let $\epsilon>0$. Then as the matrix associated to $a$ has at most one non-zero entry in each row and column by Lemma \ref{linf normal} part (i), part (ii) of that lemma implies that the operator $a^{(\epsilon)}$ with matrix entries
\begin{equation}
a^{(\epsilon)}_{xy}:=\left\{\begin{array}{ll} a_{xy} & |a_{xy}|\geq \epsilon \\ 0 & |a_{xy}|<\epsilon \end{array}\right.
\end{equation}
is well-defined, bounded, and that the collection $(a^{(\epsilon)})_{\epsilon>0}$ satisfies $\|a^{(\epsilon)}-a\|\to 0$ as $\epsilon\to 0$.  Clearly each conjugate $u^*a^{(\epsilon)}u$ is in $C^*_u(X;\mathcal{E}_A)$, however, so we are done.
\end{proof}

If $B$ is abstractly isomorphic to some $\ell^\infty(I)$ for some set $I$, then we can do better. In this case $B\subseteq A$ is unitarily equivalent to $\ell^\infty(X)\subseteq C^*_u(X,\mathcal E_A)$.

\begin{proposition}\label{equals roe}
With notation as in Lemma \ref{contained in roe}, assume moreover that $B$ is abstractly isomorphic to $\ell^\infty(I)$ for some set $I$.  Then the inclusions $u^*Bu\subseteq \ell^\infty(X)$, and $u^*Au\subseteq C^*_u(X;\mathcal{E}_A)$ are equalities. 
\end{proposition}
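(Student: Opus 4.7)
The first equality, $u^*Bu = \ell^\infty(X)$, will follow directly from Proposition \ref{full basis}(ii): the sandwich $C^*(\{p_x\}) \subseteq B \subseteq W^*(\{p_x\})$ recorded in Remark \ref{rep rem}, combined with the hypothesis $B \cong \ell^\infty(I)$, forces $B = W^*(\{p_x\})$, and $u$ carries this onto $\ell^\infty(X)$ by construction. For the second equality, Lemma \ref{contained in roe} already provides one inclusion, so it remains to show $C^*_u(X;\mathcal E_A) \subseteq u^*Au$. The proof of Lemma \ref{cart coarse lem} shows that the sets $E_{a,\epsilon}$ (for $a \in \mathcal N_A(B)$ and $\epsilon > 0$) each have at most one entry per row and column and together generate $\mathcal E_A$, so by Lemma \ref{u roe struc} it suffices to place each partial isometry $v^{E_{a,\epsilon}}$ inside $u^*Au$.

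The main obstacle is that one needs the \emph{exact} partial isometry rather than a norm approximation, and the hypothesis $B \cong \ell^\infty(I)$---which upgrades $B$ to a von Neumann algebra via the identification above---is what will let us overcome it. Fixing $a \in \mathcal N_A(B)$ and $\epsilon > 0$, Lemma \ref{linf normal}(i) tells us that $a$ is supported on a single diagonal, so a direct matrix computation shows $a^*a$ is diagonal, hence lies in $W^*(\{p_x\}) = B$. The plan is then to set $p := \chi_{[\epsilon^2,\infty)}(a^*a)$, which lies in $B \subseteq A$ precisely because $B$ is now a von Neumann algebra, and to verify by matrix inspection that $ap$ coincides with the truncation $a^{(\epsilon)}$ from the proof of Lemma \ref{contained in roe}; in particular $a^{(\epsilon)} \in A$. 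A further continuous functional calculus step on $(a^{(\epsilon)})^*a^{(\epsilon)} \in B$, whose spectrum is contained in $\{0\} \cup [\epsilon^2,\|a\|^2]$, using a continuous function $h$ equal to $t^{-1/2}$ on $[\epsilon^2,\|a\|^2]$ and to $0$ at $0$, will produce $w := a^{(\epsilon)} h((a^{(\epsilon)})^*a^{(\epsilon)}) \in A$ whose matrix entries are of unit modulus on $E_{a,\epsilon}$ and zero elsewhere.

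To finish I would normalise these phases by left-multiplication: let $d \in \ell^\infty(X) = u^*Bu$ be the diagonal whose value at $x$ is the complex conjugate of the unique non-zero entry of $w$ in row $x$ (and zero if no such entry exists). A direct computation then gives $dw = v^{E_{a,\epsilon}}$ (after the usual transfer via $u$), and since both $d$ and $w$ lie in $u^*Au$, so does $v^{E_{a,\epsilon}}$, completing the argument.
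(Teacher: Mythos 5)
Your proposal is correct and follows essentially the same route as the paper: the first equality via Proposition \ref{full basis}(ii), and the second by reducing via Lemma \ref{u roe struc} to exhibiting each $v^{E_{a,\epsilon}}$ as a product of a diagonal element of $B$ with (a truncation of) the normaliser $a$. The paper compresses your three functional-calculus steps (spectral projection of $a^*a$, polar part, phase normalisation) into one line by directly defining $f\in\ell^\infty(X)=u^*Bu$ with $f(x)=(a_{xy})^{-1}$ when there is a $y$ with $|a_{xy}|\geq\epsilon$ and $f(x)=0$ otherwise --- this is bounded by $1/\epsilon$ and single-diagonality of $a$ gives $v^{E_{a,\epsilon}}=f\,u^*au$ immediately.
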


\begin{proof}
The fact that $u^*Bu=\ell^\infty(X)$ follows from part (ii) of Proposition \ref{full basis}.  To see that $u^*Au=C^*_u(X;\mathcal{E}_A)$, Lemma \ref{u roe struc} implies that it suffices to show that for each $a\in \mathcal{N}_A(B)$ and each $\epsilon>0$, if $E=E_{a,\epsilon}$, then the partial isometry $v^{E}$ is in $u^*Au$.  Define $f\in \ell^\infty(X)$ by 
\begin{equation}
f(x):=\left\{\begin{array}{ll} (a_{xy})^{-1} &\exists y\in X\text{ such that } |a_{xy}|\geq \epsilon \\ 0 &\text{otherwise}\end{array}\right.;
\end{equation}
this definition makes sense as Lemma \ref{linf normal} part (i) implies that the matrix underlying $a$ has at most one non-zero entry in each row.  Noting that $f\in \ell^\infty(X)=u^*Bu\subseteq u^*Au$, we get that $v^E=fu^*au$ is in $u^*Au$ and so the proof is complete.
\end{proof}

The next definition and theorem formalise much of the above discussion.

\begin{definition}\label{const}
Let $\mathcal{A}$ be the collection of triples $(A,B,\mathcal{K})$, where $A$ is a unital $C^*$-algebra, $B\subseteq A$ is a Cartan subalgebra abstractly $^*$-isomorphic to $\ell^\infty(I)$ for some set $I$, and $\mathcal{K}$ is an essential ideal of $A$ that is abstractly $^*$-isomorphic to the compact operators on some Hilbert space.   Let $\mathcal{X}$ be the collection of connected, bounded geometry coarse spaces $(X,\mathcal{E})$.

Define correspondences 
\begin{equation}
\Phi:\mathcal{X}\to \mathcal{A}, \quad  (X,\mathcal{E}) \mapsto (C^*_u(X;\mathcal{E}),\ell^\infty(X),\mathcal{K}(\ell^2(X)))
\end{equation}
(notation on the right as in Definition \ref{u roe}) and 
\begin{equation}
\Psi: \mathcal{A}\to \mathcal{X}, \quad (A,B,\mathcal{K})\mapsto (X,\mathcal{E}_A)
\end{equation}
(notation on the right as in Definition \ref{cart coarse def}).
\end{definition}

\begin{theorem}\label{cart coarse prop}
The two correspondences $\Phi:\mathcal{X}\to \mathcal{A}$ and $\Psi: \mathcal{A}\to \mathcal{X}$ are well-defined.  Moreover, the compositions $\Phi\circ \Psi$ and $\Psi\circ \Phi$ are both `isomorphic to the identity' in the following precise senses.

For $\Phi\circ \Psi$: for any triple $(A,B,\mathcal{K})\in\mathcal A$, let $H$ be a representation as in Remark \ref{rep rem}; then there is a unitary isomorphism $u:\ell^2(X)\to H$ such that 
\begin{equation}
u^*Au=C^*_u(X;\mathcal{E}_A), \quad u^*Bu=\ell^\infty(X),\quad \text{and}\quad u^*\mathcal{K}u=\mathcal{K}(\ell^2(X)).
\end{equation}
For $\Psi\circ \Phi$: for any $(X,\mathcal{E})\in\mathcal X$, letting $A=C^*_u(X;\mathcal{E})$ and identifying the set of minimal projections in $\ell^\infty(X)$ with $X$, we have that $\mathcal{E}=\mathcal{E}_A$.
\end{theorem}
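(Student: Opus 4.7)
The plan is to verify well-definedness of $\Phi$ and $\Psi$, and then handle the two composition statements separately, both of which follow from machinery already developed in the section.

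First, the well-definedness of $\Phi$ is immediate from Proposition \ref{l inf is cartan}: for connected bounded geometry $(X,\mathcal{E})$, the algebra $\ell^\infty(X)$ is a Cartan subalgebra of $C^*_u(X;\mathcal{E})$ abstractly isomorphic to $\ell^\infty$ on $X$, and $\mathcal{K}(\ell^2(X))$ is an essential ideal (in fact, the unique minimal ideal). The well-definedness of $\Psi$ is exactly Lemma \ref{cart coarse lem}.

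For $\Phi \circ \Psi$, given $(A,B,\mathcal{K}) \in \mathcal{A}$, I would fix a representation on $H$ as in Remark \ref{rep rem}, choose a unit vector $\xi_x$ in the range of each minimal projection $p_x$ of $B$, and set $u:\ell^2(X)\to H$, $\delta_x\mapsto\xi_x$. Lemma \ref{contained in roe} gives the inclusions $u^*Bu\subseteq\ell^\infty(X)$ and $u^*Au\subseteq C^*_u(X;\mathcal{E}_A)$, and Proposition \ref{equals roe}, which uses exactly the hypothesis $B\cong\ell^\infty(I)$ built into the definition of $\mathcal{A}$, upgrades both to equalities. The statement about $\mathcal{K}$ is then automatic: $u^*\mathcal{K}u$ is an essential ideal of $u^*Au=C^*_u(X;\mathcal{E}_A)$ isomorphic to the compacts on some Hilbert space, and such an ideal is unique, so $u^*\mathcal{K}u=\mathcal{K}(\ell^2(X))$.

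For $\Psi\circ\Phi$, set $A=C^*_u(X;\mathcal{E})$ and $B=\ell^\infty(X)$; under the tautological identification of the minimal projections of $B$ with $X$, I need to show $\mathcal{E}_A=\mathcal{E}$. For the inclusion $\mathcal{E}_A\subseteq\mathcal{E}$, it suffices to place each generator $E_{a,\epsilon}$ (with $a\in\mathcal{N}_A(B)$, $\epsilon>0$) inside $\mathcal{E}$: choose $a'\in\C_u[X;\mathcal{E}]$ with $\|a-a'\|<\epsilon/2$; then $|a_{xy}-a'_{xy}|\leq\|a-a'\|<\epsilon/2$ for every $(x,y)$, so $E_{a,\epsilon}\subseteq\operatorname{supp}(a')\in\mathcal{E}$ and by axiom (iv) of Definition \ref{cs} we conclude $E_{a,\epsilon}\in\mathcal{E}$. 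For the reverse inclusion, given $E\in\mathcal{E}$, Lemma \ref{greedy} produces a decomposition $E=\bigsqcup_{n=1}^N E_n$ with each $E_n$ meeting every row and column at most once; Lemma \ref{u roe struc} then places each partial isometry $v^{E_n}$ in $\mathcal{N}_A(B)$, and by construction $E_n=E_{v^{E_n},1/2}$, so $E_n\in\mathcal{E}_A$ and hence $E\in\mathcal{E}_A$ by the union axiom.

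I do not anticipate a serious obstacle: essentially the whole theorem is a matter of assembling Lemmas \ref{contained in roe}, \ref{u roe struc}, \ref{greedy} and Propositions \ref{l inf is cartan}, \ref{equals roe} from the preceding material. The only piece of genuine content left in the proof itself is the elementary observation that matrix entries are controlled by the operator norm, which allows finite propagation approximation to translate norm estimates into set-theoretic containment of supports.
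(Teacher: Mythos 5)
Your proposal is correct and follows essentially the same route as the paper's proof: well-definedness via Proposition \ref{l inf is cartan} and Lemma \ref{cart coarse lem}, the $\Phi\circ\Psi$ statement via Lemma \ref{contained in roe} and Proposition \ref{equals roe}, and both inclusions for $\Psi\circ\Phi$ via the finite-propagation approximation of normalisers and the Lemma \ref{greedy} decomposition with $E_{v^{E_n},1/2}=E_n$. The only (harmless) addition is your explicit uniqueness-of-the-minimal-ideal argument for $u^*\mathcal{K}u=\mathcal{K}(\ell^2(X))$, which the paper leaves implicit.
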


\begin{proof}
The correspondence $\Phi:\mathcal{X}\to \mathcal{A}$ takes values in $\mathcal{A}$ by Proposition \ref{l inf is cartan}.  The correspondence $\Psi:\mathcal{A}\to \mathcal{X}$ takes values in $\mathcal{X}$ by Lemma \ref{cart coarse lem}.  

The statement about the composition $\Phi\circ \Psi$ follows immediately from Proposition \ref{equals roe}.

To see the stated property for $\Psi\circ \Phi$, we first show that $\mathcal{E}_A\subseteq \mathcal{E}$.  Given  $a\in \mathcal{N}_{C^*_u(X;\mathcal{E})}(\ell^\infty(X))$ and $\epsilon>0$, find $b\in \C_u[X;\mathcal{E}]$ such that $\|a-b\|<\epsilon$.  Therefore
\begin{equation}
\{(x,y)\mid |a_{xy}|\geq\epsilon\}\subseteq \{(x,y)\mid b_{xy}\neq 0\}\in\mathcal E,
\end{equation}
and hence $E_{a,\epsilon}\in\mathcal E$. Therefore $\mathcal{E}_A\subseteq \mathcal{E}$.

For the reverse inclusion, let $E$ be an arbitrary element of $\mathcal{E}$.  Lemma \ref{greedy} gives us a decomposition
\begin{equation}
E=\bigsqcup_{n=1}^N E_n
\end{equation}
of $E$ into sets $E_n$ whose intersection with each row and column contains at most one element.  Then with the notation of Definition \ref{p isoms}, $v^{E_n}$ is a well-defined partial isometry in $\mathcal N_{C^*_u(X;\mathcal E)}(\ell^\infty(X))$ for each $n$.  With the notation of Definition \ref{cart coarse def}, we have that $E_{v^{E_n},1/2}=E_n$, and thus $E_n$ is contained in $\mathcal{E}_A$.  As this is true for each $n$, $E$ is contained in $\mathcal{E}_A$, and we are done.
\end{proof} 

Finally we characterise when the coarse structure $(X,\mathcal E_A)$ is metrisable in terms of the Cartan pair $B\subseteq A$.  First, a general lemma.

\begin{lemma}\label{new cor}
With notation as in Lemma \ref{contained in roe}, assume moreover that $B$ is abstractly isomorphic to $\ell^\infty(I)$ for some set $I$. Then any normaliser $a\in \mathcal N_A(B)$ can be approximated arbitrarily well in norm by products $fv$, where $f\in B$ and $v$ is a partial isometry in $A$ normalising $B$.
\end{lemma}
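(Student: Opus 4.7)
The plan is to use Proposition \ref{equals roe} to reduce to the concrete picture where $A=C^*_u(X;\mathcal{E}_A)$ and $B=\ell^\infty(X)$ act on $\ell^2(X)$, with $X$ the set of minimal projections of $B$. Since unitary conjugation preserves normalisers, partial isometries, and factorisations of the desired form, it suffices to prove the approximation statement in this concrete setting.

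First, Lemma \ref{linf normal}(i) tells us that the matrix $(a_{xy})$ of the normaliser $a$ is supported on a single diagonal. Fix $\epsilon>0$ and, following the construction in the proof of Lemma \ref{contained in roe}, define the truncated operator $a^{(\epsilon)}$ by $a^{(\epsilon)}_{xy}:=a_{xy}$ when $|a_{xy}|\geq\epsilon$ and $a^{(\epsilon)}_{xy}:=0$ otherwise. The matrix $a-a^{(\epsilon)}$ is still supported on a single diagonal and has all non-zero entries of modulus less than $\epsilon$, so Lemma \ref{linf normal}(ii) gives $\|a-a^{(\epsilon)}\|\leq\epsilon$. By construction the support of $a^{(\epsilon)}$ is exactly $E:=E_{a,\epsilon}$, which lies in $\mathcal{E}_A$ by definition and, by Lemma \ref{linf normal}(i), has at most one entry per row and column.

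With $E$ in hand the factorisation writes itself. Set $v:=v^E$ as in Definition \ref{p isoms}: this is a partial isometry in $\C_u[X;\mathcal{E}_A]\subseteq A$ that normalises $\ell^\infty(X)=B$ by Lemma \ref{u roe struc}. Define $f\in B$ by setting $f(x):=a_{xy}$ when there is a (necessarily unique) $y\in X$ with $(x,y)\in E$, and $f(x):=0$ otherwise; this yields a bounded function since $|f(x)|\leq \|a\|$. A direct matrix computation gives $(fv)_{xz}=f(x)v_{xz}=a_{xz}$ for $(x,z)\in E$ and $0$ elsewhere, so $fv=a^{(\epsilon)}$ and hence $\|a-fv\|\leq\epsilon$. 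I do not expect any substantive obstacle here: the argument is essentially bookkeeping, with the scalar data of $a^{(\epsilon)}$ absorbed into $f\in B$ while $v$ carries the combinatorial support information, and all of the required infrastructure has been developed in Lemmas \ref{linf normal}, \ref{u roe struc}, and \ref{contained in roe}.
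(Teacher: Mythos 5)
Your proposal is correct and follows essentially the same route as the paper: both reduce to the concrete picture $\ell^\infty(X)\subseteq C^*_u(X;\mathcal E_A)$ via Proposition \ref{equals roe}, truncate the single-diagonal matrix of $a$ at level $\epsilon$, and factor the truncation as $fv^E$ with $E=E_{a,\epsilon}$ and $v^E$ as in Definition \ref{p isoms}. The only cosmetic difference is that the paper defines $f(x)$ on every row where $a$ has a nonzero entry rather than only on rows meeting $E$, but since $v^E$ kills the extra rows the resulting product is identical.
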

\begin{proof}
Given any normaliser $c\in \mathcal N_{C^*_u(X,\mathcal E_A)}(\ell^\infty(X))$, and $\epsilon>0$, define $E:=\{(x,y)\in X\times X:|c_{xy}|\geq\epsilon\}$ so that $v^E$ is a partial isometry in $C^*_u(X,\mathcal E_A)$ normalising $\ell^\infty(X)$. Define 
\begin{equation}
f(x):=\begin{cases}c_{xy}&\exists y\in X,\ c_{xy}\neq 0\\0,&\text{otherwise}\end{cases}
\end{equation}
(the $y$ above being unique if it exists by Lemma \ref{linf normal} (i)), so that $\|c-fv^E\|\leq\epsilon$.  The result then transfers from $\ell^\infty(X)\subseteq C^*_u(X,\mathcal E_A)$ to $B\subseteq A$ by Proposition \ref{equals roe}.
\end{proof}

\begin{lemma}\label{more props}
Let $(A,B,\mathcal{K})$ and $(X,\mathcal{E})$ correspond to each other under the constructions of Definition \ref{const} and Theorem \ref{cart coarse prop}.  Then $B$ is co-separable in $A$ if and only if $\mathcal{E}_A$ is countably generated.  In particular, $B$ is co-separable in $A$ if and only if $\mathcal{E}_A$ is metrisable.
\end{lemma}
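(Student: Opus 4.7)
The plan is to prove the first equivalence directly and deduce the metrisability statement immediately from Theorem \ref{met the}, since $(X,\mathcal{E}_A)$ is always connected and of bounded geometry by Lemma \ref{cart coarse lem}. Throughout, I will transfer freely between $(A,B)$ and $(C^*_u(X;\mathcal{E}_A),\ell^\infty(X))$ via the unitary of Proposition \ref{equals roe}, working in the uniform Roe algebra picture.

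For $(\Leftarrow)$, if $\mathcal{E}_A$ is generated by a countable family, applying Lemma \ref{greedy} to each generator yields a countable generating family of single-diagonal sets; the associated partial isometries $v^E$ of Definition \ref{p isoms} lie in $\C_u[X;\mathcal{E}_A]$ and normalise $\ell^\infty(X)$, and together with $\ell^\infty(X)$ they generate $C^*_u(X;\mathcal{E}_A)$ by Lemma \ref{u roe struc}. Transferring back yields co-separability of $B$ in $A$.

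For $(\Rightarrow)$, choose a countable $S\subseteq\mathcal{N}_A(B)$ with $A=C^*(S,B)$, which is permitted by Definition \ref{cosepdef}. After transfer, each $s\in S$ is a single-diagonal normaliser of $\ell^\infty(X)$ by Lemma \ref{linf normal}(i), so $E_{s,1/n}=\{(x,y):|s_{xy}|\geq 1/n\}$ is single-diagonal and lies in $\mathcal{E}_A$. Let $\mathcal{E}_0\subseteq\mathcal{E}_A$ be the coarse structure generated by the countable family $\{E_{s,1/n}:s\in S,\,n\in\N\}$. The cut-off $s^{(1/n)}$ with matrix $s_{xy}$ on $E_{s,1/n}$ and $0$ elsewhere is supported in $\mathcal{E}_0$, and by Lemma \ref{linf normal}(ii), $\|s-s^{(1/n)}\|\leq 1/n$, so $s\in C^*_u(X;\mathcal{E}_0)$. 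Hence $C^*_u(X;\mathcal{E}_0)\supseteq C^*(S,\ell^\infty(X))=C^*_u(X;\mathcal{E}_A)$, so the two uniform Roe algebras agree. To upgrade this to $\mathcal{E}_0=\mathcal{E}_A$, take any $E\in\mathcal{E}_A$ and decompose it via Lemma \ref{greedy} into finitely many single-diagonal pieces $F_1,\dots,F_N$; for each $i$, $v^{F_i}\in\C_u[X;\mathcal{E}_A]\subseteq C^*_u(X;\mathcal{E}_0)$, so it can be approximated by some $a\in\C_u[X;\mathcal{E}_0]$ with $\|a-v^{F_i}\|<1/2$, forcing $F_i\subseteq\mathrm{supp}(a)\in\mathcal{E}_0$, hence $F_i\in\mathcal{E}_0$. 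Thus $E\in\mathcal{E}_0$, proving $\mathcal{E}_A$ is countably generated.

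The main technical point, used in both directions, is that normalisers of $\ell^\infty(X)$ in a uniform Roe algebra have single-diagonal matrices (Lemma \ref{linf normal}(i)) and that such matrices are controlled in norm by their largest entry (Lemma \ref{linf normal}(ii)); this interplay is precisely what converts a countable family of normalisers into a countable family of generators for the coarse structure and conversely, and it also lets one recover coarse-structure equality from equality of the corresponding uniform Roe algebras via the rank-one rigidity argument in the final step.
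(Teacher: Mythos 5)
Your proof is correct, and while the $(\Leftarrow)$ direction coincides with the paper's argument (decompose countably many generators via Lemma \ref{greedy} and invoke Lemma \ref{u roe struc}), your $(\Rightarrow)$ direction runs along a genuinely different track. The paper first invokes Lemma \ref{new cor} to replace the countable generating set by \emph{partial isometries} normalising $B$, closes that set under products, uses the identity $b_1s_1b_2s_2=b_1(s_1b_2s_1^*)s_1s_2$ to show that elements of the form $\sum_i b_is_i$ are dense in $A$, and then bounds the level set $\{(x,y):|a_{xy}|\geq\epsilon\}$ of an arbitrary normaliser $a$ by the union of the supports of finitely many $s_i$. You instead work with arbitrary normalisers $s\in S$, build the candidate coarse structure $\mathcal{E}_0$ from their level sets $E_{s,1/n}$, use the cut-off estimate $\|s-s^{(1/n)}\|\leq 1/n$ (valid by Lemma \ref{linf normal}(ii) since the difference is still single-diagonal) to conclude $C^*_u(X;\mathcal{E}_0)=C^*_u(X;\mathcal{E}_A)$, and then upgrade equality of Roe algebras to equality of coarse structures by approximating each $v^{F_i}$ to within $1/2$ and reading off its support. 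Your route buys you independence from Lemma \ref{new cor} and the product-closure manipulation, at the cost of the extra rigidity step at the end; the paper's route captures the level sets of normalisers directly and so never needs to compare two uniform Roe algebras. Both arguments are complete, and the deduction of metrisability from Theorem \ref{met the} via connectedness (Lemma \ref{cart coarse lem}) is the same in each.
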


\begin{proof}
Suppose first that $\mathcal{E}_A$ is countably generated, say by $E^1,E^2,...$.  Then Lemma \ref{greedy} allows us to decompose each $E^m$ into finitely many parts 
\begin{equation}
E^m=\bigsqcup_{n=1}^{N_m} E^m_n
\end{equation}
such that each $E^m_n$ only intersects each row and column at most once.  Lemma \ref{u roe struc} then gives us a countable set of operators $\{v^{E^m_n}\mid m\geq 1, 1\leq n \leq N_m\}$ that together with $B\cong \ell^\infty(X)$ generate $A\cong C^*_u(X,\mathcal{E})$.  Hence $B$ is co-separable in $A$.

Conversely, suppose $B$ is co-separable in $A$. Using Lemma \ref{new cor}, we can find a countable set $S$ of partial isometries in $A$ normalising $B$, such that $C^*(S,B)=A$.  Moreover we may assume that $S$ is closed under taking finite products. Then, for $b_1,b_2\in B$ and $s_1,s_2\in S$, we have 
\begin{equation}
b_1s_1b_2s_2=b_1s_1s_1^*s_1b_2s_2=b_1(s_1b_2s_1^*)s_1s_2,
\end{equation}
which is of the form $bs$ for $b=b_1s_1b_2s_1^*\in B$ and $s=s_1s_2\in S$. As such, the collection of finite linear combinations $\{\sum_{i=1}^n b_is_i\mid b_i\in B,\ s_i\in S\}$ has dense linear span in $A$.  

Let $\mathcal D$ be the coarse structure generated by the countable family of sets $E_{s,1}:=\{(x,y)\mid |s_{xy}|\geq1\}=\{(x,y)\mid s_{xy}\neq 0\}$ indexed by $s\in S$. Each $E_{s,1}$ is in $\mathcal E_A$, so $\mathcal D\subseteq \mathcal E_A$. For the reverse inclusion, given a normaliser $a\in \mathcal N_A(B)$ and $\epsilon>0$, find a finite linear combination $\sum_{i=1}^nb_is_i$ with $b_i\in B$ and $s_i\in S$ such that $\|a-\sum_{i=1}^nb_is_i\|<\epsilon/2$.  Then
\begin{align}
\nonumber\{(x,y)\mid |a_{xy}|\geq\epsilon\}\subseteq&\bigcup_{i=1}^n\{(x,y)\mid |(b_is_i)_{xy}|\geq\epsilon/2n\}\\
\subseteq&\bigcup_{i=1}^n E_{s_i}\in \mathcal D.
\end{align}
Therefore $\mathcal E_A=\mathcal D$, and hence $\mathcal E_A$ is countably generated.

The remaining comment about metrisability is immediate from Theorem \ref{met the}.
\end{proof}

In the light of the previous results, it makes sense to encapsulate the key features of a Cartan pair which enable us to obtain a bounded geometry metric space in the following definition.

\begin{definition}\label{roe cartan def}
An inclusion  $B\subseteq A$ of $\mathrm{C}^*$-algebras is a \emph{Roe Cartan pair} if:
\begin{enumerate}[(i)]
\itemsep-.1cm 
\item $A$ is unital;
\item $A$ contains the $C^*$-algebra of compact operators on a separable infinite dimensional Hilbert space as an essential ideal\footnote{and therefore as its unique minimal ideal.};
\item $B$ is a co-separable Cartan subalgebra of $A$ abstractly isomorphic to $\ell^\infty(\mathbb N)$.
\end{enumerate}
A subalgebra $B$ of a uniform Roe algebra $C^*_u(X)$ is a \emph{Roe Cartan} subalgebra, if $B\subseteq C^*_u(X)$ is a Roe Cartan pair.
\end{definition}

With this definition, Theorem \ref{cart abs} is an immediate consequence of Theorem \ref{cart coarse prop} and Lemma \ref{more props}.

We end this section with a proof of Corollary \ref{IntroC}.  

\begin{proof}[Proof of Corollary \ref{IntroC}]
Assume that $X$ coarsely embeds into Hilbert space, and let $B\subseteq C^*_u(X)$ be a Roe Cartan.  Let $Y$ be the metric space associated to $B$ by Theorem \ref{cart abs}, so in particular $C^*_u(X)$ is isomorphic to $C^*_u(Y)$.  We now complete the proof by appealing to \cite[Corollary 1.5]{Braga:2019wv}, which states that if $X$ and $Y$ are bounded geometry metric spaces such that $X$ is coarsely embeddable in Hilbert space and $C^*_u(X)$ is isomorphic to $C^*_u(Y)$, then $X$ is coarsely equivalent to $Y$.
\end{proof}

\begin{remark}\label{bf ce rem}
Let $\mathcal{P}$ be a property of bounded geometry metric spaces that is invariant under coarse equivalences.  For the purposes of this remark, let us say that $\mathcal{P}$ implies \emph{rigidity} if whenever $X$ and $Y$ both have $\mathcal{P}$ and $C^*_u(X)$ is isomorphic to $C^*_u(Y)$, then $X$ is coarsely equivalent to $Y$.  On the other hand, let us say that $\mathcal{P}$ implies \emph{superrigidity} if whenever $X$ has $\mathcal{P}$ and $C^*_u(X)$ is isomorphic to $C^*_u(Y)$, then $X$ is coarsely equivalent to $Y$.  As is clear from the proof, any property $\mathcal{P}$ that implies superrigidity could be used as a hypothesis for Corollary \ref{IntroC} in place of coarse embeddability into Hilbert space.

Now, an earlier version of this paper proved Corollary \ref{IntroC} under the stronger assumption that $X$ has property A.  Indeed, property A implies rigidity by \cite[Theorem 1.4]{Spakula:2011bs}.  Moreover, property A for $X$ is equivalent to nuclearity of $C^*_u(X)$ by \cite[Theorem 5.3]{Skandalis:2002ng} or \cite[Theorem 5.5.7]{Brown:2008qy}, and it is clear that if $\mathcal{P}$ implies rigidity and is such that $\mathcal{P}$ for $X$ that can be characterized by an isomorphism invariant of $C^*_u(X)$, then in fact $\mathcal{P}$ implies superrigidity.

Earlier work of Braga and Farah \cite[Corollary 1.2]{Braga:2018dz} showed that coarse embeddability into Hilbert space implies rigidity.   This would not be enough for our proof of Corollary \ref{IntroC}, as it was not known if coarse embeddability into Hilbert space of $X$ can be characterised by an isomorphism-invariant of $C^*_u(X)$.  Given this, it is quite striking that Braga, Farah, and Vignati were able to prove that coarse embeddability into Hilbert space implies superrigidity in their very recent work \cite{Braga:2019wv}, as was used in the proof above.

Similar remarks to these also apply to Corollary \ref{IntroD}, which we discuss in the next section.
\end{remark}

\section{Uniqueness of Cartan subalgebras up to automorphism}\label{inn sec} 

In this short section we prove Corollary \ref{IntroD}.  This is a reasonably straightforward consequence of the results of the previous section combined with the main results of \cite{Braga:2018dz} and a theorem of Whyte \cite[Theorem 4.1]{Whyte:1999uq}. 

First, we give a slight variation of \cite[Theorem 4.1]{Whyte:1999uq}; this is probably well-known to experts.  Unexplained terminology in the proof can be found in the cited papers of Block and Weinberger, and of Whyte.

\begin{theorem}\label{whyte the}
Let $X$ and $Y$ be bounded geometry metric spaces, at least one of which is non-amenable.  Let $f:X\to Y$ be a coarse equivalence.  Then there is a bijective coarse equivalence from $X$ to $Y$ that is close to $f$.
\end{theorem}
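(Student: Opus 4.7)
My plan is to follow Whyte's original Hall-marriage argument, checking that it adapts from quasi-isometries to general coarse equivalences between bounded geometry metric spaces.

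First, non-amenability in the Block--Weinberger sense is a coarse invariant, so one may assume both $X$ and $Y$ are non-amenable: there exist $c > 0$ and $R_0 > 0$ such that for every finite subset $F$ of either space and every $k \geq 1$,
\begin{equation*}
|N_{k R_0}(F)| \geq (1+c)^k |F|,
\end{equation*}
where $N_r(F)$ denotes the $r$-neighbourhood of $F$.  The goal is to produce a bijection $h\colon X \to Y$ with $\sup_{x \in X} d_Y(h(x), f(x)) < \infty$; any such $h$ is automatically a coarse equivalence, being at bounded distance from $f$.

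To construct $h$, I would apply Hall's marriage theorem to the bipartite graph $G$ with parts $X$ and $Y$ and an edge $x \sim y$ whenever $d_Y(f(x), y) \leq R$, for a parameter $R$ to be chosen.  Bounded geometry and uniform expansiveness make $G$ locally finite with uniformly bounded degrees, so a K\"onig's lemma compactness argument reduces the task to verifying the doubly-finite Hall condition: $|N_G(A)| \geq |A|$ for all finite $A \subseteq X$ and $|N_G(B)| \geq |B|$ for all finite $B \subseteq Y$.  For finite $A \subseteq X$, a coarse inverse $g$ of $f$ with $d_X(g(f(x)), x) \leq C$ forces each fibre $f^{-1}(\{y\})$ to have size at most $n_C$, whence $|f(A)| \geq |A|/n_C$.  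Since $N_G(A) = N_R(f(A))$, non-amenability of $Y$ yields
\begin{equation*}
|N_G(A)| \geq (1+c)^{\lfloor R/R_0 \rfloor} |A|/n_C,
\end{equation*}
which exceeds $|A|$ once $R$ is chosen large.  The symmetric condition for $B \subseteq Y$ is handled by the same strategy applied to $g$: writing $N_G(B) = f^{-1}(N_R(B))$ and noting the inclusion $g(N_{R-C'}(B)) \subseteq N_G(B)$, where $C'$ is the closeness constant of $f \circ g$ to the identity on $Y$, bounded multiplicity of $g$ combined with non-amenability of $Y$ again gives the required inequality.

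The main obstacle is choosing $R$ uniformly so that both Hall conditions hold simultaneously, and in particular ensuring the constants arising from non-injectivity of $f$ and $g$ are overcome.  This is possible because the isoperimetric surplus from non-amenability grows exponentially in $R$, whereas the multiplicity losses are controlled by fixed bounded-geometry and coarse-equivalence parameters.  The combinatorial estimates above depend only on uniform expansiveness, bounded geometry, and uniform preimage bounds, not on the linear distortion bounds particular to quasi-isometries, so Whyte's original argument adapts essentially unchanged.
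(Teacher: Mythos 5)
Your proof is correct, but it takes a genuinely different route from the paper. The paper deduces the theorem from Whyte's Theorem 4.1 via uniformly finite homology: it checks that Whyte's argument extends from quasi-isometries to general coarse equivalences between bounded geometry spaces, and then invokes Block--Weinberger's theorem that $H^{uf}_0(Z)=0$ precisely when $Z$ is non-amenable, so that Whyte's homological hypothesis $f_*[X]=[Y]$ is vacuous in this setting. You instead bypass the homology entirely and run a direct Hall's-marriage argument on the bipartite graph with $x\sim y$ iff $d_Y(f(x),y)\leq R$, verifying the two-sided Hall condition from the iterated linear isoperimetric inequality $|N_{kR_0}(F)|\geq(1+c)^k|F|$ characterising non-amenability together with the uniform fibre bounds for $f$ and its coarse inverse; both verifications in fact use only non-amenability of $Y$, which is harmless since non-amenability is a coarse invariant. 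Your argument is more elementary and self-contained; the paper's approach buys generality, since Whyte's criterion also treats amenable spaces whenever the class $f_*[X]-[Y]$ happens to vanish, the non-amenable case being exactly the one in which the whole group $H^{uf}_0$ vanishes. Two small presentational points: the fibre bound for $f$ should be $n_{2C}$ rather than $n_C$ (if $f(x)=f(x')$ then $d_X(x,x')\leq 2C$), and the compactness argument only produces a matching saturating one side from the corresponding one-sided Hall condition --- to combine the two saturating matchings into a single bijection one needs the Cantor--Schr\"{o}der--Bernstein device that the paper itself employs in the proof of Corollary \ref{csb cor}, which preserves adjacency in the graph and hence closeness to $f$. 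Neither point affects the validity of the argument.
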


\begin{proof}
For a bounded geometry metric space $Z$, let $H^{uf}_*(Z)$ denote the uniformly finite homology of $Z$ (with integer coefficients) in the sense of Block and Weinberger \cite[Section 2]{Block:1992qp}, and let $[Z]\in H^{uf}_0(Z)$ be the fundamental class of $Z$, i.e.\ the $0$-cycle defined by the constant function on $X$ with value one everywhere.  From the discussion around \cite[Proposition 2.1]{Block:1992qp}, if $f:X\to Y$ is a coarse embedding\footnote{``Coarse embedding'' is the current terminology for what Block and Weinberger call an \emph{effectively proper Lipschitz} map.}, then $f$ induces a map $f_*:H_*^{uf}(X)\to H^{uf}_*(Y)$.  Whyte proves in \cite[Theorem 4.1]{Whyte:1999uq} that if $f:X\to Y$ is a quasi-isometry between uniformly discrete\footnote{A metric space $X$ is \emph{uniformly discrete} if $\inf_{x,y\in X,x\neq y}d(x,y)>0$.}, bounded geometry metric spaces with $f_*[X]=[Y]$, then there is a bi-Lipschitz map $X\to Y$ that is close to $f$.  Let us sketch why Whyte's arguments also imply the result in the statement.

Now, with no real changes, Whyte's proof of \cite[Theorem 4.1]{Whyte:1999uq} as stated above shows that if $f:X\to Y$ is a map between bounded geometry (not necessarily uniformly discrete) metric spaces such that
\begin{enumerate}[(i)]
\itemsep-.1cm 
\item $f$ is a coarse embedding (so induces maps on $H^{uf}_*$),
\item $f_*[X]=[Y]$,
\item $f$ has coarsely dense image (meaning that $\sup_{x\in X}d(x,f(Y))<\infty$), and
\item there is a map $g:Y\to X$ such that $g\circ f$ and $f\circ g$ are close to the identities and $g$ has the properties (i), (ii), and (iii) above,
\end{enumerate}
then there is a bijection close to $f$.  Note, however, that if $f:X\to Y$ is a coarse equivalence, then it will have have properties (i), (iii) and (iv) above, and that any map close to a coarse equivalence is a coarse equivalence, so we get the following statement: if $f:X\to Y$ is a coarse equivalence between bounded geometry metric spaces with $f_*[X]=[Y]$, then there is a bijective coarse equivalence $X\to Y$ that is close to $f$. 

To complete the argument we must show that (ii) above is always satisfied under our hypotheses.  Indeed, note that amenability is invariant under coarse equivalence of bounded geometry metric spaces, as follows for example from \cite[Proposition 2.1 and Theorem 3.1]{Block:1992qp}.  Hence if one of $X$ or $Y$ as in our set up is non-amenable, then the other is.  Moreover, Block and Weinberger show in \cite[Theorem 3.1]{Block:1992qp} that $X$ is non-amenable if and only if $H^{uf}_0(X)=0$, and thus condition (ii) from Whyte's theorem is vacuous in our set-up.  This completes the proof. 
\end{proof}

\begin{proof}[Proof of Corollary \ref{IntroD}]
Let $Y$ be as in Theorem \ref{cart abs}, and identify $B$ with $\ell^\infty(Y)$, and $A=C^*_u(X)$ with $C^*_u(Y)$.  Choose an orthonormal basis $\{\xi_y\}_{y\in Y}$ for $H=\ell^2(X)$ that is compatible with the identification $B\cong \ell^\infty(Y)$. Precisely, if the minimal projection in $B$ corresponding to the characteristic function of $\{y\}$ is $p_y$, then choose $\xi_y$ to be a unit vector in the image of $p_y$.   

As $X$ coarsely embeds into Hilbert space, $X$ and $Y$ are coarsely equivalent by \cite[Corollary 1.5]{Braga:2019wv}, just as in the proof of Corollary \ref{IntroC}.  Hence Theorem \ref{whyte the} gives us a bijective coarse equivalence $f:X\to Y$.  

Now define a map $u:\ell^2(X)\to \ell^2(X)$ by $u\delta_x=\xi_{f(x)}$. This is a unitary isomorphism, as $f$ is a bijection.  Using that $f$ is a coarse equivalence, it follows that conjugation by $u$ takes $A$ to $C^*_u(Y)$, or in other words, $u$ conjugates $A$ to itself.  Define $\alpha:A\to A$ by $\alpha(a)=uau^*$; we then have $\alpha(\ell^\infty(X))=u\ell^\infty(X)u^*=B$ as required. 
\end{proof}

\begin{remark}\label{inner rem}
It does not seem to be clear if the unitary $u$ produced by the above proof is actually in $A$ (or can be chosen to be in $A$), so we cannot conclude that the automorphism $\alpha$ in the statement of Corollary \ref{IntroD} is inner.  Note that while any automorphism of a uniform Roe algebra $C^*_u(X)$ is induced by a unitary $u:\ell^2(X)\to \ell^2(X)$ (see \cite[Lemma 3.1]{Spakula:2011bs}) there are often many non-inner automorphisms of $C^*_u(X)$.  For an illustrative example, take $X=\Z$ and the automorphism given by conjugation by the unitary
$$
u:\ell^2(\Z)\to \ell^2(\Z), \quad \delta_n\mapsto \delta_{-n}.
$$
This is not inner, or even approximately inner.  One can see this, for example, as it is non-trivial on $K$-theory.  Indeed, the Pimsner-Voiculescu sequence implies that $K_1(C^*_u(X))$ is isomorphic to $\Z$, and generated by the class $[v]$ of the bilateral shift on $\ell^2(\Z)$.  We have $uvu^*=v^*$, so conjugation by $u$ takes $[v]$ to $-[v]$; on the other hand, if the corresponding automorphism were (approximately) inner, then it would act trivially on $K$-theory.
\end{remark}

\section{Uniqueness of Cartan subalgebras up to inner automorphism}\label{c*-alg sec}

In this section, we prove our main result, Theorem \ref{c*-alg the}.

The following notation will be in force for the rest of the section.  Let $B\subseteq C^*_u(X)$ satisfy the assumptions of Theorem \ref{c*-alg the}.  Theorem \ref{cart abs} (with $H=\ell^2(X)$) implies that there is a bounded geometry metric space $Y$, and a unitary isomorphism $v:\ell^2(Y)\to \ell^2(X)$ such that 
\begin{equation}
v\ell^\infty(Y)v^*=B \quad \text{and}\quad vC^*_u(Y)v^*=C^*_u(X).
\end{equation}

\begin{lemma}\label{y fdc}
With notation as above, the space $Y$ has property A.
\end{lemma}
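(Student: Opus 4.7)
The plan is to reduce the question to a $C^*$-algebraic invariant of the uniform Roe algebra, and then transport it across the isomorphism provided by $v$. The crucial input is the well-known characterisation of property A in terms of the uniform Roe algebra: for a bounded geometry metric space $Z$, property A of $Z$ is equivalent to nuclearity of $C^*_u(Z)$. This is the result cited in the excerpt as \cite[Theorem 5.3]{Skandalis:2002ng} (or \cite[Theorem 5.5.7]{Brown:2008qy}), and is also invoked in Remark \ref{bf ce rem}.

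Given this characterisation, the argument is immediate. Since $X$ has property A by hypothesis, $C^*_u(X)$ is nuclear. Conjugation by the unitary $v\colon \ell^2(Y)\to \ell^2(X)$ induces a $*$-isomorphism $C^*_u(Y)\to C^*_u(X)$, so nuclearity transfers to $C^*_u(Y)$. Applying the characterisation in the other direction yields property A for $Y$.

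There is no real obstacle here; the lemma is essentially the observation that property A, viewed through the lens of a Roe Cartan pair, is detected purely at the level of the $C^*$-algebra $A=C^*_u(X)$ and is therefore inherited by the companion space $Y$ supplied by Theorem \ref{cart abs}. The only minor point worth flagging is ensuring that the characterisation of property A via nuclearity applies to $Y$ without further hypotheses (beyond bounded geometry), which is indeed the content of the cited references, since $Y$ is already a bounded geometry metric space by Theorem \ref{cart abs}.
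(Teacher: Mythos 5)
Your proposal is correct and follows exactly the paper's own argument: the paper likewise deduces property A for $Y$ from the characterisation of property A via nuclearity of the uniform Roe algebra (\cite[Theorem 5.3]{Skandalis:2002ng} or \cite[Theorem 5.5.7]{Brown:2008qy}), transported across the $^*$-isomorphism $C^*_u(Y)\cong C^*_u(X)$ implemented by $v$.
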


\begin{proof}
This follows as a bounded geometry metric space has property A if and only if its uniform Roe algebra is nuclear by \cite[Theorem 5.3]{Skandalis:2002ng} or \cite[Theorem 5.5.7]{Brown:2008qy}.
\end{proof}

At this point, we have two spaces $X$ and $Y$ with property A, and a unitary isomorphism $v:\ell^2(Y)\to \ell^2(X)$ that conjugates $C^*_u(Y)$ to $C^*_u(X)$.  Our task is to show that there is some unitary $u\in C^*_u(X)$ that conjugates $v\ell^\infty(Y)v^*$ to $\ell^\infty(X)$.

We will need some more notation that will be used throughout the rest of this section.  For each $y\in Y$, let $q_y\in \mathcal{B}(\ell^2(Y))$ denote the orthogonal projection onto the span of $\delta_y$.  Similarly, for each $x\in X$, let $p_x\in \mathcal{B}(\ell^2(X))$ be the orthogonal projection onto the span of $\delta_x$. For a subset $C$ of $X$ (respectively, of $Y$) define 
\begin{equation}
p_C:=\sum_{x\in C} p_x \quad \Big(\text{respectively, } q_C:=\sum_{y\in C} q_y\Big)
\end{equation}
for the corresponding multiplication operator on $\ell^2(X)$ (respectively, on $\ell^2(Y)$).

The proof splits fairly cleanly into three main steps. 

\begin{enumerate}
\item Uniform approximability.  For each subset $C$ of $Y$, we know that $vq_Cv^*$ is in the uniform Roe algebra of $X$, whence the following holds: ``$\forall \epsilon>0$, $\forall C\subseteq Y$, $\exists s>0$ such that $v\chi_Cv^*$ can be approximated within $\epsilon$ by an operator in $C^*_u(X)$ with propagation at most $s$''. A priori $s$ depends on $\epsilon$ and $C$.  Our first aim is to improve this statement, so that $s$ only depends on $\epsilon$. Using a result of Braga and Farah (\cite[Lemma 4.9]{Braga:2018dz}), this can be achieved with no assumptions on $X$ and $Y$ beyond that they are bounded geometry metric spaces.
\item The operator norm localisation property.  The operator norm localisation property was introduced by Chen, Tessera, Wang, and Yu \cite{Chen:2008so}; it was shown to be equivalent to property A by Sako \cite{Sako:2012fk}.  The key application here is roughly the following statement: ``$\forall\epsilon>0$, $\exists r>0$  such that  $\forall y\in Y$,  $\exists X_y\subseteq X$ of diameter at most $r$ such that $\|vq_yv^*p_{X_y}\|\geq 1-\epsilon$''.  This says roughly that we can match points in $Y$ to uniformly bounded subsets of $X$.  We need a stronger, somewhat more quantitative version of this that also works for subsets of $X$ other than singletons $\{x\}$; see Lemma \ref{soup up}.
\item Completion of the proof.  To finish the proof, the above step can be combined with Hall's marriage theorem to get an injection $f:Y\to X$ with $f(y)\in X_y$ for all $x$.  As the situation is symmetric, we get a similar injection $g:X\to Y$, and so a bijection $h:X\to Y$ from K\"{o}nig's proof of the Cantor-Schr\"{o}der Bernstein theorem.  This $h$ defines a unitary $w:\ell^2(X)\to \ell^2(Y)$ by $w\delta_x=\delta_{h(x)}$, which conjugates $\ell^\infty(X)$ to $\ell^\infty(Y)$.  To complete the proof, it suffices to show that $u:=vw$ is contained in $C^*_u(X)$: this is achieved by using the quantitative results from the previous step to get a weak form of finite propagation for $u$, and then appealing to an approximation result due to \v{S}pakula and Zhang \cite{Spakula:2018aa} (which builds on work of \v{S}pakula and Tikuisis \cite[Theorem 2.8]{Spakula:2017aa}) to show that this weak property is enough.  
\end{enumerate}

\subsubsection*{Step one: uniform approximability}

Here is the result of Braga and Farah that we will use; it is a special case of \cite[Lemma 4.9]{Braga:2018dz}.  

\begin{lemma}\label{bf the}
Let $Z$ be a bounded geometry metric space.  Suppose $(a_n)_{n=1}^\infty$ is a sequence of finite rank operators on $\ell^2(Z)$ such that for every bounded sequence $(\lambda_n)_{n=1}^\infty$ of complex numbers, the series $\sum_{n=1}^\infty \lambda_n a_n$ converges strongly to an operator in $C^*_u(Z)$.  Then for every $\epsilon>0$ there exists $s>0$ such that for every bounded sequence $(\lambda_n)_{n=1}^\infty$ there is $a\in C^*_u(X)$ of propagation at most $s$ and $\|\sum_{n=1}^\infty \lambda_n a_n-a\|<\epsilon$. \qed
\end{lemma}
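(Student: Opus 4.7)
The plan is the classical Baire category / uniform boundedness paradigm: pointwise data about a family of bounded linear maps on a Banach space upgrades to uniform data. The first task is to check that the prescription $T\lambda := \sum_n \lambda_n a_n$, interpreted as a strong-operator limit, defines a bounded linear map $T : \ell^\infty(\mathbb N) \to C^*_u(Z)$. Linearity is immediate. For boundedness, I would apply the uniform boundedness principle to the partial-sum maps $T_N\lambda := \sum_{n \le N} \lambda_n a_n$: for each $\lambda$ the sequence $(T_N\lambda \xi)$ converges in $\ell^2(Z)$ for each $\xi$, hence is norm-bounded; two applications of UBP (first in $\xi$, then in $\lambda$) give $\sup_N \|T_N\| < \infty$, and passing to the strong limit yields $\|T\lambda\| \le C\|\lambda\|_\infty$.

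Fix $\epsilon > 0$, and for each $s \in \mathbb N$ let $A_s$ denote the norm-closed linear subspace of $C^*_u(Z)$ consisting of operators of propagation at most $s$. Set
\[ F_s := \{\lambda \in \ell^\infty(\mathbb N) : \mathrm{dist}(T\lambda, A_s) \le \epsilon\}. \]
Each $F_s$ is closed (continuity of $T$ plus Lipschitz-continuity of distance to the closed subspace $A_s$), convex, and balanced. Moreover $F_s \subseteq F_{s+1}$, and $\bigcup_s F_s = \ell^\infty(\mathbb N)$ by the hypothesis that every $T\lambda$ lies in $C^*_u(Z) = \overline{\bigcup_s A_s}^{\|\cdot\|}$. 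Baire's theorem in the Banach space $\ell^\infty(\mathbb N)$ then produces some $s_0$ for which $F_{s_0}$ has nonempty interior; since $F_{s_0}$ is balanced and convex, the standard $\tfrac12((\lambda_0+\mu)-(\lambda_0-\mu))$ manipulation shows that the interior contains a ball $\delta B_{\ell^\infty}$ around the origin for some $\delta > 0$. Linearity of $T$ combined with the linear-subspace property of $A_{s_0}$ then yields $\mathrm{dist}(T\lambda, A_{s_0}) \le \epsilon/\delta$ for every $\lambda$ in the unit ball of $\ell^\infty(\mathbb N)$, whence the statement for an arbitrary bounded sequence follows by homogeneity.

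The subtle point — and the part I expect to require real care — is that the radius $\delta$ produced by Baire depends in an uncontrolled way on $\epsilon$, so a single application yields an $\epsilon$-approximation only up to the dilation factor $1/\delta$. To obtain the clean uniform conclusion
\[ M_s := \sup_{\|\lambda\|_\infty \le 1}\mathrm{dist}(T\lambda, A_s) \;\longrightarrow\; 0 \quad \text{as } s \to \infty, \]
I would iterate Baire along a sequence $\epsilon_k \downarrow 0$, exploit the monotonicity $M_s \ge M_{s+1}$, and use the extra structural input that each $a_n$ is finite-rank (hence individually of finite, though possibly large, propagation) so that the partial sums $T_N \lambda$ already lie in $A_{p(N)}$ for $p(N) := \max_{n \le N}\mathrm{prop}(a_n)$; combining this with the Baire-produced control on the tail $T\lambda - T_N\lambda$ should allow one to bootstrap a qualitative statement ``$M_{s_k} < \infty$ for some $s_k$'' into the quantitative decay $M_s \to 0$.
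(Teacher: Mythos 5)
First, a point of comparison: the paper does not actually prove this lemma --- it is quoted, with a \verb|\qed|, as a special case of Lemma~4.9 of Braga--Farah \cite{Braga:2018dz} --- so there is no in-paper argument to measure you against, and your proposal has to stand on its own. The soft part of what you write is correct: $T$ is bounded by uniform boundedness, each $F_s$ is closed, convex and balanced, the $F_s$ exhaust $\ell^\infty(\mathbb N)$, and Baire plus the translation trick puts a ball $\delta B_{\ell^\infty}$ inside some $F_{s_0}$, giving $\sup_{\|\lambda\|_\infty\le 1}\mathrm{dist}(T\lambda,A_{s_0})\le\epsilon/\delta$. But this conclusion is vacuous: since $0\in A_s$ one always has $\mathrm{dist}(T\lambda,A_s)\le\|T\|$ on the unit ball, and Baire supplies no lower bound on $\delta$ in terms of $\epsilon$, so $\epsilon/\delta$ tells you nothing beyond boundedness of $T$. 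The content of the lemma is precisely that the pointwise decay $\mathrm{dist}(T\lambda,A_s)\to 0$ is \emph{uniform} over the unit ball, and this is not a formal consequence of the setup: for a general bounded map $T$ into a Banach space exhausted by an increasing sequence of closed subspaces, pointwise decay holds while uniform decay fails (take $T=\mathrm{id}$ on $c_0$ and $A_s$ the span of the first $s$ coordinates). No iteration of Baire along $\epsilon_k\downarrow 0$ can manufacture the missing lower bound on $\delta(\epsilon)$; the category-theoretic machinery contributes nothing here.

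The proposed bootstrap also rests on a false premise: a finite rank operator on $\ell^2(Z)$ need not have finite propagation (a rank-one projection onto a vector whose support is all of $Z$ has infinite propagation), so $p(N):=\max_{n\le N}\mathrm{prop}(a_n)$ can be $+\infty$ and the partial sums $T_N\lambda$ need not lie in any $A_s$. What is true is that each $a_n$, being compact, is approximable in norm by finite propagation operators, so the heads $T_N\lambda$ are uniformly close to $A_{s(N,\eta)}$ for suitable $s(N,\eta)$; the entire difficulty is the tails, for which the ``Baire-produced control'' you invoke does not exist. The argument of \cite{Braga:2018dz} that the paper is citing is of a genuinely different nature: roughly, one assumes the conclusion fails for some $\epsilon$, uses compactness of the $a_n$ to localise them on finite subsets of $Z$ up to small error, extracts disjointly supported finitely supported ``humps'' $\lambda^{(k)}$ witnessing the failure at larger and larger propagation scales, and then tests the hypothesis against the single sequence $\lambda=\sum_k\lambda^{(k)}$ to reach a contradiction with $\sum_n\lambda_na_n\in C^*_u(Z)$. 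That mechanism --- using the hypothesis for one cleverly built $\lambda$ rather than quantifying over all of them --- is the missing idea, and without it your proof does not close.
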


The content of the lemma is in the order of quantifiers. The corresponding statement with $s$ depending also on the bounded sequence $(\lambda_n)_{n=1}^\infty$ is immediate.  Here is the consequence we need.
\begin{corollary}\label{spti}
For any $\epsilon>0$ there exists $s>0$ such that for any $D\subseteq Y$ there is $a\in C^*_u(X)$ with propagation at most $s$, and $\|vq_Dv^*-a\|\leq \epsilon$.
\end{corollary}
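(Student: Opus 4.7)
The plan is a direct application of Lemma \ref{bf the} to the sequence of rank-one projections $a_y := vq_y v^*$ on $\ell^2(X)$, indexed by $y \in Y$. Since $\ell^2(Y)$ is unitarily isomorphic to $\ell^2(X)$ and the latter is separable, $Y$ is countable; enumerate $Y = \{y_n : n \in \mathbb{N}\}$ and consider the sequence $(a_{y_n})_{n=1}^\infty$ of finite rank (in fact rank one) operators on $\ell^2(X)$.

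First I would verify the hypothesis of Lemma \ref{bf the}. For any bounded scalar sequence $(\lambda_n)$, the diagonal operator $\sum_n \lambda_n q_{y_n}$ converges strongly on $\ell^2(Y)$ to an element of $\ell^\infty(Y) \subseteq C^*_u(Y)$. Since conjugation by the unitary $v$ is strongly continuous on norm bounded sets and since $v C^*_u(Y) v^* = C^*_u(X)$, the series
\begin{equation}
\sum_n \lambda_n a_{y_n} \;=\; v \Bigl( \sum_n \lambda_n q_{y_n} \Bigr) v^*
\end{equation}
converges strongly to an element of $C^*_u(X)$, as required.

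Lemma \ref{bf the} then supplies, for each $\epsilon>0$, a single $s>0$ such that every bounded sequence $(\lambda_n)$ admits an $a \in C^*_u(X)$ of propagation at most $s$ with $\|\sum_n \lambda_n a_{y_n} - a\| < \epsilon$. Given $D \subseteq Y$, take $\lambda_n := \chi_D(y_n)$; then $\sum_n \lambda_n a_{y_n} = v q_D v^*$ and the approximation produced by Lemma \ref{bf the} is exactly what the corollary demands. Crucially, $s$ depends only on $\epsilon$ and not on $D$, since it did not depend on $(\lambda_n)$.

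There is no serious obstacle here: the entire content is the change of quantifier order supplied by Lemma \ref{bf the}, and the routine checks are that $Y$ is countable, the projections $q_y$ are pairwise orthogonal (so that the strong sums are well defined and stay in $\ell^\infty(Y) \subseteq C^*_u(Y)$), and that unitary conjugation transports the strong convergence and the containment in the uniform Roe algebra from $Y$ to $X$.
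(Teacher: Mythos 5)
Your proposal is correct and follows essentially the same route as the paper: both apply Lemma \ref{bf the} to the family $\{vq_yv^*\}_{y\in Y}$ after observing that any bounded combination $\sum_y \lambda_y vq_yv^*$ converges strongly to $v\bigl(\sum_y \lambda_y q_y\bigr)v^*\in vC^*_u(Y)v^*=C^*_u(X)$, and then specialise to $\lambda_y=\chi_D(y)$. The extra checks you record (countability of $Y$, orthogonality of the $q_y$) are fine but routine.
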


\begin{proof}
The family $\{vq_yv^*\}_{y\in Y}$ has the property that for any bounded sequence $(\lambda_y)_{y\in Y}$ of complex numbers, $\sum_{y\in Y}\lambda_yvq_yv^*$ converges strongly to $v^*(\sum_{y\in Y}\lambda_yq_y)v \in C^*_u(X)$.  The corollary is then immediate from Lemma \ref{bf the}.
\end{proof}

\subsubsection*{Step two: the operator norm localisation property}

We now recall the definition of the operator norm localisation property (ONL) from \cite[Definition 2.2]{Chen:2008so}.  The version we give below is equivalent to the usual one by \cite[Proposition 2.4]{Chen:2008so}. Using the main result of  \cite{Sako:2012fk}, property A is equivalent to ONL, so both our spaces $X$ and $Y$ have ONL by Lemma \ref{y fdc}.

\begin{definition}\label{onl}
A bounded geometry metric space $Z$ has the \emph{operator norm localisation property} (ONL) if for any $\epsilon\in (0,1)$ and any $s>0$ there is $r>0$ such that for any operator $a\in C^*_u(Z)$ with propagation at most $s$ there exists a unit vector $\xi\in \ell^2(Z)$ with 
\begin{equation}
\|a\xi\|\geq (1-\epsilon)\|a\|
\end{equation}
and with $\xi$ supported in a set of diameter at most $r$. 
\end{definition}

Again, the point is order of quantifiers; with $r$ also depending on $a$ the analogous statement is automatic.   \

\begin{lemma}\label{msp cor}
\begin{enumerate}[(i)]
\itemsep-.1cm 
\item \label{msp cor spec} For any $\epsilon\in (0,1)$ there exists $r>0$ such that for any non-empty $D\subseteq Y$ there is $E\subseteq X$ with $\text{diam}(E)\leq r$ and 
\begin{equation}
\|vq_Dv^*p_{E}\|\geq  (1-\epsilon).
\end{equation}
\item \label{msp cor gen} For any $\epsilon\in (0,1)$ there exists $r>0$ such that for any $C\subseteq X$ and $D\subseteq Y$ there is $E\subseteq X$ with $\text{diam}(E)\leq r$ and 
\begin{equation}
\|vq_Dv^*p_{C\cap E}\|\geq  (1-\epsilon)\|vq_Dv^*p_C\|-\epsilon.
\end{equation}
\end{enumerate}
\end{lemma}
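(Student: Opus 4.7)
The plan is to prove part (\ref{msp cor gen}) first by combining the uniform approximability of Corollary \ref{spti} with the operator norm localisation property of $X$ (which is available via Lemma \ref{y fdc} and Sako's theorem \cite{Sako:2012fk}), and then to deduce part (\ref{msp cor spec}) as the special case $C=X$ after adjusting the constants.

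For part (\ref{msp cor gen}), fix $\epsilon\in(0,1)$ and set auxiliary tolerances $\delta>0$ and $\epsilon'\in(0,1)$ to be chosen at the end. Write $T_D:=vq_Dv^*$ and note $T_D\in B\subseteq C^*_u(X)$ is a projection. First I would apply Corollary \ref{spti} with error parameter $\delta$ to produce $s>0$ (depending only on $\delta$, not on $D$) and, for each $D\subseteq Y$, an operator $a_D\in C^*_u(X)$ with $\mathrm{prop}(a_D)\leq s$ and $\|T_D-a_D\|\leq\delta$. Since $p_C\in\ell^\infty(X)$ has propagation zero, the composite $a_Dp_C$ again has propagation at most $s$. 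Now I would invoke the operator norm localisation property of $X$ with parameters $(\epsilon',s)$ to obtain $r>0$ such that, given any $D\subseteq Y$ and $C\subseteq X$, there is a unit vector $\xi\in\ell^2(X)$ supported in some subset $E\subseteq X$ with $\mathrm{diam}(E)\leq r$ and
\begin{equation}
\|a_D p_C\xi\|\geq (1-\epsilon')\|a_Dp_C\|.
\end{equation}
Crucially $r$ depends only on $\epsilon'$ and $s$, hence only on $\epsilon$ in the end.

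The main estimate then runs as follows. Since $p_E\xi=\xi$ and $p_Ep_C=p_{C\cap E}$, I have $T_Dp_{C\cap E}\xi = T_Dp_C\xi$, so
\begin{align}
\|T_Dp_{C\cap E}\| &\geq \|T_Dp_C\xi\| \geq \|a_Dp_C\xi\| - \|(T_D-a_D)p_C\xi\|\\
&\geq (1-\epsilon')\|a_Dp_C\| - \delta \geq (1-\epsilon')\bigl(\|T_Dp_C\| -\delta\bigr) - \delta \\
&\geq (1-\epsilon')\|T_Dp_C\| - 2\delta.
\end{align}
Choosing $\epsilon'=\epsilon$ and $\delta=\epsilon/2$ yields the desired inequality $\|T_Dp_{C\cap E}\|\geq (1-\epsilon)\|T_Dp_C\|-\epsilon$.

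For part (\ref{msp cor spec}), I would apply part (\ref{msp cor gen}) with $C=X$ and a smaller tolerance $\epsilon/2$ in place of $\epsilon$. Since $p_X=1$ and $T_D$ is a nonzero projection (because $D\neq\varnothing$ and $v$ is unitary), $\|T_Dp_X\|=1$, and the statement reduces to $\|T_Dp_E\|\geq (1-\epsilon/2)-\epsilon/2=1-\epsilon$, giving exactly (\ref{msp cor spec}).

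No step looks especially hard: the content of the lemma is entirely in obtaining a single $s$ (via Corollary \ref{spti}) and a single $r$ (via ONL) that work uniformly over all $D$ and $C$. The only care needed is the bookkeeping with the triangle inequality so that the error terms collapse into the stated form.
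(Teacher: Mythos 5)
Your proposal is correct and follows essentially the same route as the paper's proof: apply Corollary \ref{spti} to get a uniform propagation bound $s$, multiply by $p_C$ (which preserves propagation), invoke ONL to localise on a set $E$ of uniformly bounded diameter, and then do the triangle-inequality bookkeeping; part (\ref{msp cor spec}) is deduced from part (\ref{msp cor gen}) with $C=X$ exactly as in the paper. The only cosmetic difference is that you carry named tolerances $\delta,\epsilon'$ and fix them at the end, whereas the paper hard-codes $\epsilon/2$ throughout.
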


\begin{proof}
We look at part \eqref{msp cor gen} first. Fix $\epsilon>0$.
Using Corollary \ref{spti} there exists $s>0$ (depending only on $\epsilon$) such that for any $D\subseteq Y$, there is $a_0\in C^*_u(X)$ with propagation at most $s$ such that $\|vq_Dv^*-a_0\|<\epsilon/2$.  As $p_C$ has propagation zero, it follows that if $a:=a_0p_C$ then $a$ still has propagation at most $s$, and as $\|p_C\|\leq 1$ we have that
\begin{equation}\label{from spti}
\|vq_Dv^*p_C-a\|<\epsilon/2.
\end{equation}
Using the operator norm localisation property, there exists $r>0$ (depending only on $s$ and $\epsilon$) such that there is a unit vector $\xi\in \ell^2(X)$ with support in a set $E\subseteq X$ of diameter at most $r$ such that $\|a\xi\|\geq (1-\epsilon)\|a\|$.  Hence in particular we get
\begin{equation}\label{from onl}
\|ap_E\|\geq (1-\epsilon)\|a\|.
\end{equation}
Now, from line \eqref{from spti} we have
\begin{equation}
\|vq_Dv^*p_{C}p_{E}-ap_E\|<\epsilon/2.
\end{equation}
As $p_Cp_E=p_{C\cap E}$, this implies that
\begin{equation}
\|vq_Dv^*p_{C \cap E}\|> \|ap_E\|-\epsilon/2.
\end{equation}
Combining this with line \eqref{from onl} gives
\begin{equation}
\|vq_Dv^*p_{C \cap E}\|> (1-\epsilon)\|a\|-\epsilon/2,
\end{equation}
and applying line \eqref{from spti} again gives
\begin{align}
\|vq_Dv^*q_{C \cap E}\|&> (1-\epsilon)\|vp_Dv^*q_{C}\|-(1-\epsilon)\epsilon/2-\epsilon/2\\
&>(1-\epsilon)\|vp_Dv^*q_{C}\|-\epsilon,
\end{align}
proving \eqref{msp cor gen}.

Part \eqref{msp cor spec} follows immediately from part \eqref{msp cor gen}.  Indeed, let $r>0$ be as in the statement of part \eqref{msp cor gen} for the `error parameter' $\epsilon/2$, and take $C=X$. 
\end{proof}

We can interchange the roles of $X$ and $Y$ in the previous argument, leading to the following lemma.

\begin{lemma}\label{msp cor 2}
For any $\epsilon\in (0,1)$ there exists $r>0$ such that for any $C\subseteq X$ and any $D\subseteq Y$, there is $F\subseteq Y$ with 
\begin{equation}
\|vq_{D\cap F}v^*p_C\|\geq (1-\epsilon)\|vq_Dv^*p_C\|-\epsilon. 
\end{equation}
and $\text{diam}(F)\leq r$. 
\end{lemma}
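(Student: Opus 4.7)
The plan is to run the proof of Lemma \ref{msp cor}\eqref{msp cor gen} with the roles of $X$ and $Y$ interchanged. The key symmetry is that $v^*\colon \ell^2(X)\to \ell^2(Y)$ is a unitary conjugating $C^*_u(X)$ to $C^*_u(Y)$, and (by Lemma \ref{y fdc}) $Y$ has property A, hence ONL.

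First, I would establish a symmetric analogue of Corollary \ref{spti}: for every $\epsilon>0$ there exists $s>0$ such that for any $C\subseteq X$ there is $b\in C^*_u(Y)$ of propagation at most $s$ with $\|v^*p_Cv-b\|\leq\epsilon$. This follows by applying Lemma \ref{bf the} to the family $\{v^*p_xv\}_{x\in X}$, since for any bounded sequence $(\lambda_x)_{x\in X}$ the sum $\sum_{x\in X}\lambda_x v^*p_xv$ converges strongly to $v^*\bigl(\sum_x\lambda_xp_x\bigr)v$, which lies in $C^*_u(Y)$.

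Second, I would reformulate the quantity of interest by transferring it through $v$. Since $v$ is unitary and $q_{D\cap F}=q_Dq_F$,
\begin{equation}
\|vq_{D\cap F}v^*p_C\|=\|v^*p_Cvq_{D\cap F}\|=\|v^*p_Cvq_Dq_F\|,\quad \|vq_Dv^*p_C\|=\|v^*p_Cvq_D\|.
\end{equation}
So it suffices to find $F\subseteq Y$ of diameter at most $r$ with
\begin{equation}
\|(v^*p_Cvq_D)q_F\|\geq (1-\epsilon)\|v^*p_Cvq_D\|-\epsilon.
\end{equation}

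Third, I would mimic the proof of Lemma \ref{msp cor}\eqref{msp cor gen} verbatim. Given $\epsilon>0$, take $s>0$ from the symmetric approximation above (with error $\epsilon/2$), choose $b_0\in C^*_u(Y)$ of propagation at most $s$ with $\|v^*p_Cv-b_0\|<\epsilon/2$, and set $b:=b_0q_D$; this still has propagation at most $s$ and $\|v^*p_Cvq_D-b\|<\epsilon/2$. Since $Y$ has ONL, there is $r>0$ (depending only on $s$ and $\epsilon$, hence only on $\epsilon$) and a unit vector $\eta\in\ell^2(Y)$ supported in a set $F\subseteq Y$ of diameter at most $r$ such that $\|b\eta\|\geq(1-\epsilon)\|b\|$; in particular $\|bq_F\|\geq(1-\epsilon)\|b\|$. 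Combining these estimates with the triangle inequality (exactly as in the previous lemma) yields
\begin{equation}
\|v^*p_Cvq_Dq_F\|\geq (1-\epsilon)\|v^*p_Cvq_D\|-\epsilon,
\end{equation}
which translates back to the desired bound on $\|vq_{D\cap F}v^*p_C\|$.

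There is no real obstacle here: the proof is a direct symmetrization, and the only genuinely new input is the symmetric form of Corollary \ref{spti}, which is immediate from Lemma \ref{bf the} once one notes that $v^*C^*_u(X)v=C^*_u(Y)$.
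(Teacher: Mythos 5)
Your proposal is correct and follows essentially the same route as the paper: the paper's own proof simply observes that one may regard $v\ell^\infty(X)v^*$ as the exotic Cartan inside $C^*_u(Y)$ (which has ONL by Lemma \ref{y fdc}), reruns Lemma \ref{msp cor}\eqref{msp cor gen} with the roles of $X$ and $Y$ interchanged, and translates back via $\|v^*p_Cvq_{D\cap F}\|=\|q_{D\cap F}v^*p_Cv\|=\|vq_{D\cap F}v^*p_C\|$. You have merely spelled out the details (the symmetric analogue of Corollary \ref{spti} and the ONL step) that the paper leaves implicit, and your bookkeeping of the $-\epsilon$ error term is accurate.
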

\begin{proof}
In the previous lemma we regarded $v^*\ell^\infty(Y)v$ as an `exotic Cartan' in $C^*_u(X)$, but we could equally well regard $v\ell^\infty(X)v^*$ as an exotic Cartan in $C^*_u(Y)$. As $Y$ also has ONL, we obtain that for every $\epsilon\in(0,1)$, there exists $r>0$ such that for any $C\subseteq X$ and $D\subseteq Y$, there exists $F\subseteq Y$ of diameter at most $r$ such that
\begin{equation}
\|vp_Cv^*q_{D\cap F}\|\geq (1-\epsilon)\|vp_Cv^*q_D\|.
\end{equation}
The result follows as $\|vp_Cv^*q_{D\cap F}\|=\|q_{D\cap F}vp_Cv^*\|=\|vq_{D\cap F}v^*p_C\|$, and likewise for the right hand side.
\end{proof}

We need some more notation.  For each $y\in Y$ and $\delta>0$, define 
\begin{equation}\label{xydelta def}
X_{y,\delta}:=\{x\in X\mid \|vq_yv^*p_x\|^2\geq \delta\}.
\end{equation}
Analogously, define 
\begin{equation}\label{yxdelta def}
Y_{x,\delta}:=\{y\in Y\mid \|v^*p_xvq_y\|^2\geq \delta\}.
\end{equation}
One should think of $X_{y,\delta}$ as being the part of $X$ that is `$\delta$-close' to $y$ in some sense, and similarly for $Y_{x,\delta}$.  We extend these notions to sets by defining $X_{D,\delta}:=\bigcup_{y\in D}X_{y,\delta}$ for $D\subseteq X$, and $Y_{C,\delta}:=\bigcup_{x\in C}X_{c,\delta}$ for $C\subseteq X$.

\begin{lemma}\label{delt cor}
With notation as in lines \eqref{xydelta def} and \eqref{yxdelta def} as above:
\begin{enumerate}[(i)]
\itemsep-.1cm 
\item \label{near xydelta} for each $\epsilon>0$, there is $\delta>0$ such that for all $y\in Y$, $\|vq_yv^*p_{X_{y,\delta}}\|^2\geq 1-\epsilon$;
\item \label{xydelta bound} for each $\delta>0$ there exists $r>0$ such that for all $y\in Y$, the diameter of $X_{y,\delta}$ is at most $r$.
\end{enumerate}
\end{lemma}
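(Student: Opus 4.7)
The plan is to translate everything into the matrix entries of $v$. Set $a_{xy} := \langle \delta_x, v\delta_y\rangle$, so that $v\delta_y = \sum_x a_{xy}\delta_x$ is a unit vector and $vq_yv^*$ is the rank-one projection onto its span. A direct computation then yields $\|vq_yv^*p_C\|^2 = \sum_{x\in C}|a_{xy}|^2$ for any $C \subseteq X$, so $X_{y,\delta} = \{x \in X : |a_{xy}|^2 \geq \delta\}$ and the lemma reduces to two quantitative statements about the $\ell^2(X)$-vectors $(a_{xy})_{x\in X}$ that must hold uniformly in $y$.

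For part (i), I would invoke Lemma \ref{msp cor}(i) with error $\epsilon/2$ applied to $D = \{y\}$ to obtain a uniform $r > 0$ and, for each $y$, a set $E_y \subseteq X$ of diameter at most $r$ with $\sum_{x \in E_y}|a_{xy}|^2 \geq 1 - \epsilon/2$. Bounded geometry of $X$ converts the diameter bound into a cardinality bound $|E_y| \leq N$ depending only on $r$, so setting $\delta := \epsilon/(2N)$ we can discard the at most $N$ points of $E_y$ on which $|a_{xy}|^2 < \delta$, losing total mass at most $\epsilon/2$. What remains lies in $X_{y,\delta}$, giving $\|vq_yv^*p_{X_{y,\delta}}\|^2 \geq 1 - \epsilon$.

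For part (ii), the key observation is that the $(x_1,x_2)$-matrix entry of $vq_yv^*$ equals $a_{x_1y}\overline{a_{x_2y}}$, so whenever $x_1, x_2 \in X_{y,\delta}$ this entry has modulus at least $\delta$. I would then apply Corollary \ref{spti} with error $\delta/2$ to the family $\{q_y : y \in Y\}$: this yields $s > 0$ depending only on $\delta$ such that, for each $y$, there is $b_y \in C^*_u(X)$ of propagation at most $s$ with $\|vq_yv^* - b_y\| \leq \delta/2$. Since matrix entries are dominated by the operator norm, the $(x_1,x_2)$-entry of $b_y$ has modulus at least $\delta/2 > 0$, forcing $d(x_1, x_2) \leq s$, and hence $\mathrm{diam}(X_{y,\delta}) \leq s$.

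Nothing in the argument is deep once the matrix-entry picture is set up; the real content is the uniformity in $y$ baked into Corollary \ref{spti} and Lemma \ref{msp cor}, as each assertion is immediate for a single $y$. The only place where I would be slightly careful is the passage from diameter to cardinality in part (i), which is the one input that genuinely uses bounded geometry of $X$ (as opposed to merely of $Y$).
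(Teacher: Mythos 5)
Your proof is correct. Part (i) is essentially the paper's own argument: apply Lemma \ref{msp cor}\eqref{msp cor spec} to $D=\{y\}$ to get a set $E_y$ of uniformly bounded diameter carrying mass $1-\epsilon/2$, convert the diameter bound to a cardinality bound $N$ via bounded geometry of $X$, and take $\delta=\epsilon/(2N)$. Part (ii) is where you genuinely diverge. The paper obtains (ii) as a byproduct of the proof of (i): from $\sum_{x\in E}|\xi_y(x)|^2\geq 1-\epsilon/2$ it deduces the containment $X_{y,\epsilon/2}\subseteq E$ (else a single point outside $E$ would already carry mass $\epsilon/2$), and then relabels $\epsilon$ as $2\delta$; this route passes through Lemma \ref{msp cor} and hence through the operator norm localisation property. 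You instead observe that the $(x_1,x_2)$ matrix entry of the rank-one projection $vq_yv^*$ is $a_{x_1y}\overline{a_{x_2y}}$, which has modulus at least $\delta$ whenever $x_1,x_2\in X_{y,\delta}$, and then compare with a propagation-$s$ approximant $b_y$ supplied uniformly in $y$ by Corollary \ref{spti} with error $\delta/2$; since $|(b_y)_{x_1x_2}|\geq\delta/2>0$ forces $d(x_1,x_2)\leq s$, you get $\mathrm{diam}(X_{y,\delta})\leq s$. This is a clean and arguably preferable argument: it uses only the Braga--Farah uniform approximability input (not ONL/property A), it handles every $\delta>0$ directly without the relabelling and the $\epsilon\in(0,1)$ restriction, and it sidesteps the borderline non-strict-inequality case implicit in the paper's ``differs by a term of size at least $\epsilon/2$'' step. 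What the paper's route buys in exchange is economy --- (ii) falls out of the computation already done for (i) with no additional appeal to Corollary \ref{spti}.
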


\begin{proof}
Applying part \eqref{msp cor spec} of Lemma \ref{msp cor} with $D=\{y\}$, there is $r>0$ depending only on $\epsilon$ such that for each $y\in Y$ there is $E\subseteq X$ with $\text{diam}(E)\leq r$ and 
\begin{equation}\label{easy est}
\|vq_yv^*p_E\|^2\geq  1-\epsilon/2.
\end{equation}
Let $\xi_y$ be any unit vector in the range of the rank one projection $vq_yv^*$, and note that 
\begin{equation}
\|vq_yv^*p_E\|^2=\|p_Evq_yv^*\|^2=\|p_E\xi_y\|^2,
\end{equation}
so line \eqref{easy est} above says that 
\begin{equation}\label{sum of xiy}
\sum_{x\in E}|\xi_y(x)|^2\geq 1-\epsilon/2.
\end{equation}
Notice that this implies that $X_{y,\epsilon/2}\subseteq E$, proving (\ref{xydelta bound}) after relabelling $\epsilon$ as $2\delta$, as otherwise the sum above differs from $1=\sum_{x\in X}|\xi_y(x)|^2$ by a term of size at least $\epsilon/2$, a contradiction.

Let $N\in \N$ be an absolute bound on the cardinalities of all balls of radius $r$ in $X$, and let $\delta<\frac{\epsilon}{2N}$ (which only depends on $r$ and $\epsilon$, so only on $\epsilon$).  Then 
\begin{align}
\|vq_yv^*p_{X_{y,\delta}}\|^2 & =\sum_{x\in X_{y,\delta}}|\xi_y(x)|^2 \nonumber \\ & \geq \sum_{x\in E\cap X_{y,\delta}}|\xi_y(x)|^2 \nonumber \\
& =\sum_{x\in E}|\xi_y(x)|^2-\sum_{x\in E\setminus X_{y,\delta}}|\xi_y(x)|^2.
\end{align}
Now, on the one hand line \eqref{sum of xiy} gives $\sum_{x\in E}|\xi_y(x)|^2\geq 1-\epsilon/2$, and on the other hand $|\xi_y(x)|^2=\|p_xvq_yv^*\|^2<\delta$ for all $x\not\in X_{y,\delta}$.  Moreover, $|E|\leq N$ whence $\sum_{E\setminus X_{y,\delta}}|\xi_y(x)|^2<N\delta$.  The previous displayed inequality thus implies
\begin{equation}
\|vq_yv^*\chi_{X_{y,\delta}}\|^2\geq 1-\frac{\epsilon}{2}-N\delta,
\end{equation}
and the right hand side is at least $1-\epsilon$ by choice of $\delta$, proving (\ref{near xydelta}).
\end{proof}

We now bootstrap Lemma \ref{delt cor} \eqref{near xydelta} to subsets.

\begin{lemma}\label{soup up}
For any $\epsilon>0$, there is $\delta>0$ such that for any subset $D$ of $Y$, 
\begin{equation}
\|vq_Dv^*(1-p_{X_{D,\delta}})\|<\epsilon,
\end{equation}
and for any subset $C$ of $X$
\begin{equation}
\|v^*p_Cv(1-q_{Y_{C,\delta}})\|<\epsilon.
\end{equation}
\end{lemma}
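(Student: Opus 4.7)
The two displayed inequalities are formally symmetric under swapping $(X,v) \leftrightarrow (Y,v^*)$, so I would focus on establishing the first; the second follows by the identical argument, now using Lemma \ref{msp cor}\eqref{msp cor gen} in place of Lemma \ref{msp cor 2} and the evident $Y$-analogue of Lemma \ref{delt cor}\eqref{near xydelta} (which goes through by the same proof, since everything in sight is symmetric in $X$ and $Y$).

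My plan for the first inequality is to combine Lemma \ref{msp cor 2} with Lemma \ref{delt cor}\eqref{near xydelta} as follows. Given $\epsilon > 0$, I would first apply Lemma \ref{msp cor 2} with an error parameter $\epsilon_1 > 0$ (to be chosen later) to obtain a radius $r > 0$ such that, for any $D \subseteq Y$ and any $C \subseteq X$, there exists some $F \subseteq Y$ of diameter at most $r$ with $\|vq_{D\cap F}v^*p_C\| \geq (1-\epsilon_1)\|vq_Dv^*p_C\| - \epsilon_1$. Bounded geometry of $Y$ then gives an absolute bound $N_r$ on $|F|$, and hence on $|D\cap F|$, depending only on $r$. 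Next, I would apply Lemma \ref{delt cor}\eqref{near xydelta} with a parameter $\epsilon_2 > 0$ (also to be chosen) to obtain a $\delta > 0$ such that $\|(1-p_{X_{y,\delta}})\xi_y\|^2 \leq \epsilon_2$ for every $y \in Y$, where $\xi_y$ is any unit vector in the range of the rank-one projection $vq_yv^* = \xi_y\xi_y^*$.

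The key estimate would then proceed by applying Lemma \ref{msp cor 2} with $C := X \setminus X_{D,\delta}$, so that $p_C = 1 - p_{X_{D,\delta}}$. For each $y \in D$, the inclusion $X_{y,\delta} \subseteq X_{D,\delta}$ forces $p_C \leq 1 - p_{X_{y,\delta}}$, and hence
\[
\|vq_yv^*p_C\| = \|p_C\xi_y\| \leq \|(1-p_{X_{y,\delta}})\xi_y\| \leq \sqrt{\epsilon_2}.
\]
Summing this over the at most $N_r$ points of $D\cap F$ via the triangle inequality yields $\|vq_{D\cap F}v^*p_C\| \leq N_r\sqrt{\epsilon_2}$, which, fed back into the lower bound from Lemma \ref{msp cor 2}, produces
\[
\|vq_Dv^*(1-p_{X_{D,\delta}})\| \leq \frac{N_r\sqrt{\epsilon_2} + \epsilon_1}{1-\epsilon_1}.
\]

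The only real obstacle is managing the order of quantifiers: one must first fix $\epsilon_1$ small enough (say $\epsilon_1 = \epsilon/3$), which only then pins down $r$ and hence $N_r$; with $N_r$ in hand, $\epsilon_2$ can be chosen small enough that $N_r\sqrt{\epsilon_2} < \epsilon/3$, and this choice finally determines $\delta$ through Lemma \ref{delt cor}\eqref{near xydelta}. With these choices the right-hand side of the displayed estimate lies below $\epsilon$, completing the proof. Crucially, $\delta$ does not depend on $D$, which is exactly the content we need.
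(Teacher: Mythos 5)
Your proposal is correct and follows essentially the same route as the paper: fix the error parameter for Lemma \ref{msp cor 2} first to pin down $r$ and the cardinality bound on $F$, then choose the parameter in Lemma \ref{delt cor}\eqref{near xydelta} small relative to that bound, apply the localisation with $C=X\setminus X_{D,\delta}$, and sum the single-point estimates over $D\cap F$ by the triangle inequality. The symmetry reduction for the second inequality (and taking the minimum of the two resulting $\delta$'s) is also exactly what the paper does.
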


\begin{proof}
Fix $\gamma>0$, to be chosen later in a way depending only on $\epsilon$.  Using Lemma \ref{msp cor 2} there is $r>0$ such that for any $C\subseteq X$ and $D\subseteq Y$ we have $F\subseteq Y$ with $\text{diam}(F)\leq r$ such that 
\begin{equation}
\|vq_{D\cap F}v^*p_C\Big\|\geq (1-\gamma)\|vq_Dv^*p_C\|-\gamma.
\end{equation}
Hence 
\begin{equation}\label{start}
\|vq_Dv^*p_C\|\leq \frac{\|vq_{D\cap F}v^*p_C\|+\gamma}{1-\gamma}.
\end{equation}
Let $M$ be some large positive number, to be chosen later (in a way that depends only on $r$ and $\gamma$, so only on $\gamma$, so only on $\epsilon$).  Applying Lemma \ref{delt cor} \eqref{near xydelta} gives $\delta_Y>0$ such that $\|vq_yv^*p_{X_{y,\delta_Y}}\|^2\geq 1-\frac{1}{M}$ for all $y\in Y$.  As before, let $\xi_y$ be any unit vector in the image of $vq_yv^*$, and note that 
\begin{equation}
\|vq_yv^*p_{X_{y,\delta_Y}}\|^2+\|vq_yv^*(1-p_{X_{y,\delta_Y}})\|^2=\sum_{x\in X_{y,\delta_Y}}|\xi_y(x)|^2 +\sum_{x\not\in X_{y,\delta_Y}}|\xi_y(x)|^2,
\end{equation}
which equals one, and therefore also that 
\begin{equation}
\|vq_yv^*(1-p_{X_{y,\delta_Y}})\|\leq 1/\sqrt{M}
\end{equation}
for all $y\in Y$.  Hence, for any $y\in D$
\begin{equation}\label{partway}
\|vq_yv^*(1-p_{X_{D,\delta_Y}})\|\leq \|vq_yv^*(1-p_{X_{y,\delta_Y}})\|\leq1/\sqrt{M}.
\end{equation}  
Now, take $C:=X\setminus X_{D,\delta_Y}$, and find $F\subseteq Y$ with diameter at most $r$, so that line (\ref{start}) holds.  Using this and line \eqref{partway}, we get
\begin{align}
\|vq_Dv^*(1-p_{X_{D,\delta_Y}})\| & \leq \frac{|F|\sup_{y\in D}\|vq_yv^*(1-p_{X_{D,\delta_Y}})\|+\gamma}{1-\gamma} \nonumber \\ & \leq \frac{|F|\frac{1}{\sqrt{M}}+\gamma}{1-\gamma}. 
\end{align}
Let $N$ be a bound on the cardinalities of all $r$-balls in $Y$, and set $M:=(N/\gamma)^2$. Then the above says that 
\begin{equation}
\|vq_Dv^*(1-p_{X_{D,\delta_Y}})\|\leq \frac{2\gamma}{1-\gamma}.
\end{equation}
Choosing $\gamma<\frac{\epsilon}{2+\epsilon}$, this proves the first claim of the lemma.

Interchanging the roles of $X$ and $Y$, exactly as in Lemma \ref{msp cor 2}, we can run the proof of Lemma \ref{delt cor} and the proof above, to obtain $\delta_X>0$ such that $\|v^*p_Cv(1-q_{Y_{C,\delta_X}})\|<\epsilon$ for all $C\subseteq X$.  Then, we take $\delta=\min(\delta_X,\delta_Y)$. 
\end{proof}

\subsubsection*{Step 3: completion of the proof}

To complete the proof, we first give an application of Hall's marriage theorem to construct appropriate maps.

\begin{lemma}\label{hall lem}
There exist $\delta>0$ and injections $f:Y\to X$ and $g:X\to Y$ such that $f(y)\in X_{y,\delta}$ for all $y\in Y$ and $g(x)\in Y_{x,\delta}$ for all $x\in X$.
\end{lemma}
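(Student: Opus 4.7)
The plan is to apply an infinite version of Hall's marriage theorem to the bipartite graph $G$ on vertex classes $Y \sqcup X$, where $y \in Y$ is joined to $x \in X$ precisely when $x \in X_{y,\delta}$, for a carefully chosen $\delta>0$. To fix this $\delta$, I would apply Lemma \ref{soup up} with $\epsilon = 1/2$, obtaining $\delta > 0$ such that simultaneously
\begin{equation*}
\|vq_Dv^*(1-p_{X_{D,\delta}})\|<\tfrac12 \quad \text{and} \quad \|v^*p_Cv(1-q_{Y_{C,\delta}})\|<\tfrac12
\end{equation*}
for every $D\subseteq Y$ and $C\subseteq X$.

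The key step is verifying Hall's condition $|X_{D,\delta}|\geq |D|$ for every finite $D\subseteq Y$. Set $E:=vq_Dv^*$ and $p:=p_{X_{D,\delta}}$. Then $E$ is a self-adjoint projection of rank $|D|$, so taking adjoints in the estimate above gives $\|(1-p)E\|<1/2$. For any unit vector $\xi$ in the range of $E$, we have $E\xi=\xi$, so
\begin{equation*}
\|(1-p)\xi\|=\|(1-p)E\xi\|\leq \|(1-p)E\|<\tfrac12,
\end{equation*}
whence $p\xi\neq 0$. Thus $p$ is injective on the $|D|$-dimensional subspace $\mathrm{Range}(E)$, forcing $|D|\leq \mathrm{rank}(p) = |X_{D,\delta}|$.

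Next I would verify that $G$ has bounded degree on both sides, so that the infinite form of Hall's theorem (e.g.\ via a compactness argument reducing to the finite case) applies. On the $Y$-side, each $X_{y,\delta}$ has diameter at most some $r$ depending only on $\delta$ by Lemma \ref{delt cor}(ii), hence cardinality bounded by a constant $N$ by bounded geometry of $X$. On the $X$-side, since $\{q_y\}_{y\in Y}$ is a complete orthogonal family of rank-one projections with $\sum_y q_y = 1$ strongly,
\begin{equation*}
\sum_{y\in Y}\|vq_yv^*p_x\|^2=\sum_{y\in Y}\mathrm{tr}(p_x v q_y v^* p_x)=\mathrm{tr}(p_x vv^* p_x)=1,
\end{equation*}
so at most $\lfloor 1/\delta\rfloor$ indices $y$ can satisfy $x\in X_{y,\delta}$. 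Hall's theorem then yields an injection $f:Y\to X$ with $f(y)\in X_{y,\delta}$ for all $y\in Y$.

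For the map $g:X\to Y$, I would interchange the roles of $X$ and $Y$: the second half of Lemma \ref{soup up} supplies the identical estimate with $v$ replaced by $v^*$, and the same rank-counting and degree-bound arguments produce $g$ with $g(x)\in Y_{x,\delta}$. The main subtlety is really only the rank-counting argument for Hall's condition; once that is in place, everything else reduces to standard bookkeeping with the matching theorem under local finiteness.
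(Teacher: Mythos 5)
Your proof is correct and follows essentially the same route as the paper: fix $\delta$ via Lemma \ref{soup up} with $\epsilon=1/2$, verify Hall's condition for finite $D\subseteq Y$ by a rank-counting argument from the estimate $\|vq_Dv^*(1-p_{X_{D,\delta}})\|<1/2$, invoke Hall's marriage theorem, and obtain $g$ by symmetry. Your explicit verification of local finiteness on both sides (via Lemma \ref{delt cor}(ii) and the trace identity $\sum_{y}\|vq_yv^*p_x\|^2=1$) is a detail the paper leaves implicit, but it does not change the argument.
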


\begin{proof}
Fix $\epsilon=1/2$ (any $\epsilon<1$ would work), and let $\delta$ satisfy the condition in Lemma \ref{soup up} for this $\epsilon$.  We first claim that for any finite $D\subseteq Y$, the cardinality of $X_{D,\delta}$ is at least as large as that of $D$, or in other words that that the rank of $p_{X_{D,\delta}}$ is at least as big as that of $vq_Dv^*$.  If not, then the rank of $vq_Dv^*$ is strictly larger than that of $p_{X_{D,\delta}}$; this forces the images of $vq_Dv^*$ and $1-p_{X_{D,\delta}}$ to have non-trivial intersection and thus $\|vq_Dv^*(1-p_{X_{D,\delta}})\|\geq 1$, contradicting the inequality in the first statement of Lemma \ref{soup up}.

Consider now the function $\phi:Y\to \mathcal{P}(X)$ defined by $\phi(y):=X_{y,\delta}$. Then for any finite subset $D\subseteq Y$, \begin{equation}
\Big|\bigcup_{y\in D}\phi(y)\Big|=|X_{D,\delta}|\geq |D|.
\end{equation}
The existence of $f$ follows from Hall's marriage theorem.

The existence of $g$ follows in exactly the same way, using the second statement in Lemma \ref{soup up}.
\end{proof}

\begin{corollary}\label{csb cor}
There exists $\delta>0$, $r>0$ and a bijection $h:X\to Y$ such that for any $x\in X$, $X_{h(x),\delta}$ is contained in the ball $B(x;r)$ around $x$ of radius $r$. 
\end{corollary}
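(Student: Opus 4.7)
The plan is to construct the bijection $h$ via the Cantor--Schr\"oder--Bernstein theorem (in K\"onig's constructive form) applied to the injections from Lemma \ref{hall lem}, and then observe that the resulting $h$ automatically satisfies the required geometric bound.

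First I would fix $\delta>0$ and injections $f:Y\to X$, $g:X\to Y$ as provided by Lemma \ref{hall lem}, so $f(y)\in X_{y,\delta}$ and $g(x)\in Y_{x,\delta}$. Applying CSB produces a bijection $h:X\to Y$ with the following explicit form: for each $x\in X$, either $h(x)=g(x)$ or $h(x)=f^{-1}(x)$ (the choice depending on tracing the backward orbit of $x$ under $g^{-1}$ and $f^{-1}$ and seeing where, if anywhere, it terminates).

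The key observation is a symmetry between the sets $X_{y,\delta}$ and $Y_{x,\delta}$: since $v$ is unitary,
\begin{equation}
\|v^*p_xvq_y\|=\|(v^*p_xvq_y)^*\|=\|q_yv^*p_xv\|=\|vq_yv^*p_x\|,
\end{equation}
so $y\in Y_{x,\delta}$ if and only if $x\in X_{y,\delta}$. Applying this in each of the two cases from CSB:
\begin{itemize}
\itemsep-.1cm
\item if $h(x)=g(x)$, then $h(x)\in Y_{x,\delta}$, so $x\in X_{h(x),\delta}$;
\item if $h(x)=f^{-1}(x)$, then $x=f(h(x))\in X_{h(x),\delta}$.
\end{itemize}
In either case $x\in X_{h(x),\delta}$.

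To conclude, I would invoke Lemma \ref{delt cor}(\ref{xydelta bound}) to obtain $r>0$ (depending only on $\delta$) such that $\mathrm{diam}(X_{y,\delta})\leq r$ for every $y\in Y$. Since $x\in X_{h(x),\delta}$, this gives $X_{h(x),\delta}\subseteq B(x;r)$, as required. There is no real obstacle here beyond carefully setting up CSB and verifying the $v$-unitarity symmetry; all the analytic work was done in Lemmas \ref{delt cor}, \ref{soup up}, and \ref{hall lem}.
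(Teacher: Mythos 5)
Your proof is correct and follows essentially the same route as the paper: König's form of Cantor--Schröder--Bernstein applied to the injections from Lemma \ref{hall lem}, the unitary symmetry $\|v^*p_xvq_y\|=\|vq_yv^*p_x\|$ to handle the case $h(x)=g(x)$, and the uniform diameter bound from Lemma \ref{delt cor}(\ref{xydelta bound}). Isolating the symmetry as the statement ``$y\in Y_{x,\delta}$ iff $x\in X_{y,\delta}$'' is a slightly cleaner packaging of the same case analysis the paper carries out.
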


\begin{proof}
Let $\delta>0$ and $f:Y\to X$ and $g:X\to Y$ be injections as in Lemma \ref{hall lem}.  K\"{o}nig's proof of the Cantor-Scr\"{o}der-Bernstein theorem gives us a bijection $h:X\to Y$ with the property that for each $x\in X$, either $h(x)=g(x)$, or $x$ is in the image of $f$ and $h(x)=f^{-1}(x)$.  

To complete the proof we must show that there exists $r>0$ such that for every $x\in X$, $X_{h(x),\delta}$ is contained in the ball $B(x;r)$ centered at $x$ with radius $r$.  Indeed, let $r$ equal the supremum of the diameters of the sets $X_{y,\delta}$ as $y$ ranges over $Y$; $r$ is finite by part (ii) of Lemma \ref{delt cor}.    Note first that if $x\in X$ is such that $h(x)=f^{-1}(x)$ for some $x\in X$, then $f(h(x))=x$ is an element of $X_{h(x),\delta}$ by the properties of $f$.  This implies that $X_{h(x),\delta}$ is contained in $B(x;r)$ by choice of $r$.  On the other hand, say $x\in X$ is such that $h(x)=g(x)$.  Then, by the defining property of $g$, $g(x)\in Y_{x,\delta}$, from which it follows that $\|v^*p_xvq_{g(x)}\|^2\geq \delta$.  Hence $\|p_xvq_{g(x)}v^*\|^2\geq \delta$, which says exactly that $x$ is in $X_{g(x),\delta}$.  The result follows by assumption on the diameter of all of the $X_{y,\delta}$.
\end{proof}

Now let $h:X\to Y$ be any bijection as in the conclusion of Corollary \ref{csb cor} for some appropriate $\delta>0$.  Let $w:\ell^2(X)\to \ell^2(Y)$ be the unitary defined by $w\delta_x=\delta_{h(x)}$.  Clearly then $w^*\ell^\infty(Y)w=\ell^\infty(X)$, and as also $v\ell^\infty(Y)v^*=B$ we thus get that $w^*v^*Bvw=\ell^\infty(X)$.  To complete the proof, it suffices to show that the unitary 
\begin{equation}\label{the one}
u:=w^*v^*
\end{equation}
is in $C^*_u(X)$.  To this end, we need a general criterion of \v{S}pakula and Zhang \cite{Spakula:2018aa} (itself building on an earlier result of \v{S}pakula and Tikuisis \cite[Theorem 2.8]{Spakula:2017aa}). 

\begin{theorem}[\v{S}pakula and Zhang]\label{st the}
Let $Z$ be a bounded geometry metric space with property A, and let $a\in \mathcal{B}(\ell^2(Z))$ be such that for any $\epsilon>0$ there exists $s>0$ such that if $C,D\subseteq Z$ satisfy $d(C,D)>s$, then $\|\chi_Ca\chi_D\|<\epsilon$.  Then $a$ is in $C^*_u(X)$.
\end{theorem}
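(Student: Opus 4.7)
My plan is to construct, for each $\epsilon>0$, a finite-propagation operator $\Phi(a)\in\C_u[Z]$ with $\|a-\Phi(a)\|<\epsilon$, via a Schur-multiplier averaging built from a property~A kernel. Property~A provides, for every $R,\delta>0$, a scale $S>0$ and a non-negative kernel $k\colon Z\times Z\to[0,1]$ with $\sum_y k(x,y)=1$, with $k(x,y)=0$ for $d(x,y)>S$, and satisfying $\sum_y|k(x,y)-k(x',y)|<\delta$ whenever $d(x,x')\leq R$. Setting $\xi_x:=\sum_y k(x,y)^{1/2}\delta_y$, each $\xi_x$ is a non-negative unit vector supported in the ball of radius $S$ about $x$, and $\|\xi_x-\xi_{x'}\|_2^2<\delta$ for $d(x,x')\leq R$. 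The Gram kernel $m_{zx}:=\langle \xi_z,\xi_x\rangle$ is then positive semidefinite with $m_{zz}=1$, and vanishes when $d(z,x)>2S$. The associated Schur multiplier $\Phi(a):=m\circ a$ is contractive, as one sees from the isometric dilation $\Phi(a)=V^*(a\otimes 1)V$ with $V\delta_x:=\delta_x\otimes\xi_x$, and has propagation at most $2S$, so $\Phi(a)\in\C_u[Z]$.

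The main task is then to estimate $\|a-\Phi(a)\|=\|(1-m)\circ a\|$. I would split this as $T_\mathrm{near}+T_\mathrm{far}$ according to whether $d(z,x)\leq R$ or $d(z,x)>R$. On the near-diagonal region one has $|1-m_{zx}|<\delta/2$, so $T_\mathrm{near}$ has propagation at most $R$ with entries of magnitude at most $(\delta/2)\|a\|$. By bounded geometry, each row and column has at most some uniform number $N(R)$ of non-zero entries, so a Schur test yields $\|T_\mathrm{near}\|\leq N(R)(\delta/2)\|a\|$, which can be made arbitrarily small by shrinking $\delta$ after $R$ is chosen.

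The hard part will be bounding $T_\mathrm{far}$. Entrywise it is dominated by $a$ on $\{d(z,x)>R\}$, but pointwise decay of entries is not enough to control the operator norm; one must convert the block-norm hypothesis into a global bound. The strategy is to decompose the off-band region into a uniformly bounded number of ``colour classes'' on each of which the restriction of $a$ is a block-diagonal sum of operators of the form $\chi_{C_\alpha}a\chi_{D_\alpha}$ with $d(C_\alpha,D_\alpha)>R$; the hypothesis then bounds each summand by $\sup_{d(C,D)>R}\|\chi_Ca\chi_D\|$, and summing across the finitely many colours gives a bound linear in this quantity. Property~A (via the operator norm localisation property, in the form developed by \v{S}pakula--Tikuisis and refined by \v{S}pakula--Zhang) is what enables such a decomposition with multiplicity controlled uniformly across all scales; the bounded-geometry greedy argument of Lemma~\ref{greedy} alone does not suffice, since the off-band region has infinite diameter. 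Once $\|T_\mathrm{far}\|\leq\epsilon/2$ is arranged by choosing $R$ sufficiently large, shrinking $\delta$ forces $\|T_\mathrm{near}\|<\epsilon/2$, yielding $\|a-\Phi(a)\|<\epsilon$ and hence $a\in C^*_u(Z)$.
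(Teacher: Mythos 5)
First, a point of comparison: the paper does not prove this result at all --- it is quoted as a black box from \v{S}pakula and Zhang \cite{Spakula:2018aa} --- so there is no internal proof to measure your attempt against, and your proposal has to stand on its own as a proof of their theorem.

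Your first half is correct and is the standard opening move: the vectors $\xi_x$, the positive semidefinite Gram kernel $m$, contractivity of the Schur multiplier via the isometric dilation $V^*(a\otimes 1)V$, the propagation bound $2S$, and the Schur-test estimate on $T_{\mathrm{near}}$ are all fine. The gap is that the entire content of the theorem is concentrated in the bound on $T_{\mathrm{far}}$, and the mechanism you propose for it cannot work. A decomposition of the off-band region $\{(z,x):d(z,x)>R\}$ into a \emph{uniformly bounded} number $N$ of colour classes, each a union of rectangles $C_\alpha\times D_\alpha$ with $d(C_\alpha,D_\alpha)>R$, the $C_\alpha$ pairwise disjoint and the $D_\alpha$ pairwise disjoint, would give $\|a|_{\{d>R\}}\|\leq N\sup_{d(C,D)>R}\|\chi_C a\chi_D\|$, and hence that the band truncations $a|_{\{d\leq R\}}$ of every quasi-local operator converge to it in norm as $R\to\infty$. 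This is false already for $Z=\Z$: every element of $C^*_u(\Z)$ is quasi-local, the copy of $C^*_r(\Z)\cong C(\T)$ inside $C^*_u(\Z)$ is band-truncated by the Fourier partial-sum operators, and there are continuous functions on $\T$ whose Fourier series do not converge uniformly (equivalently, triangular truncation is unbounded). So no such bounded-multiplicity decomposition exists, and neither ONL nor the greedy Lemma \ref{greedy} will produce one. More generally, any route that passes through an entrywise restriction of $a$ to the unbounded region $\{d>R\}$ (or $\{d>2S\}$, where $T_{\mathrm{far}}$ literally agrees with $a$) is trying to bound a quantity whose smallness is essentially equivalent to the theorem itself; this is precisely why the quasi-locality question was open for so long, and why the \v{S}pakula--Zhang argument is forced to proceed quite differently, never performing a sharp truncation. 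As it stands, your proof establishes the easy half (the approximant $\Phi(a)$ has finite propagation and the near-diagonal error is small) and defers the hard half to a decomposition that does not exist.
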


We finally have all the ingredients in place to prove our main result.

\begin{proof}[Proof of Theorem \ref{c*-alg the}]
Let $h$ and $\delta$ be as in the conclusion of Corollary \ref{csb cor}, and let $u$ be the associated unitary as in line \eqref{the one}.  We claim first that for any $\epsilon>0$ there is $t>0$ such that for any subset $C\subseteq X$ if $N_t(C):=\{x\in X\mid d(x,C)\leq t\}$ then we have 
\begin{equation}
\|vq_{h(C)}v^*(1-p_{N_t(C)})\|<\epsilon.
\end{equation}
Indeed, applying Lemma \ref{soup up} with $D:=h(C)$ gives us $\gamma>0$ such that 
\begin{equation}
\|vq_{h(C)}v^*(1-p_{X_{h(C),\gamma}})\|<\epsilon.
\end{equation}
Now, we may assume that $\gamma\leq \delta$ and thus we have that $X_{h(x),\gamma}\supseteq X_{h(x),\delta}$ for all $x\in C$.  Let $r$ be such that $X_{h(x),\delta}$ is contained in $B(x;r)$ (such exists by Corollary \ref{csb cor}), and let $s$ be such that $X_{h(x),\gamma}$ has diameter at most $s$ for all $x\in C$ (such an $s$ exists by Lemma \ref{delt cor}, part \eqref{xydelta bound}).  Hence each $X_{h(x),\gamma}$ is contained in $B(x;s+r)$.  The claim follows with $t=s+r$.  

Now, from the claim we have that for any $\epsilon>0$ there is $t>0$ such that for any subset $C\subseteq X$ we have
\begin{equation}
\|vwp_{C}w^*v^*(1-p_{N_t(C)})\|<\epsilon.
\end{equation}
Hence for any $\epsilon>0$ there is $t>0$ such that for any subset $C\subseteq X$ we have
\begin{equation}
\|p_Cu(1-p_{N_t(C)})\|<\epsilon,
\end{equation}
and this in turn implies that for any $\epsilon>0$ there is $t>0$ such that for any subsets $C,D\subseteq X$ with $d(C,D)>t$ we have that 
\begin{equation}
\|\chi_Cu\chi_D\|<\epsilon.
\end{equation}
Hence, by Theorem \ref{st the}, $u$ is in $C^*_u(X)$.
\end{proof}

\begin{remark}\label{fdc vs a rem}
An earlier version of this paper used \emph{finite decomposition complexity} (FDC) in place of the hypothesis of property A  in Theorem \ref{c*-alg the}.  FDC was introduced by Guentner, Tessera, and Yu \cite{Guentner:2009tg} in their work on topological rigidity.  We initially used FDC as we originally appealed to an analogue of Theorem \ref{st the}, due to \v{S}pakula and Tikuisis \cite{Spakula:2017aa}, that has FDC as a hypothesis in place of property A.  Since the first version of our paper, \v{S}pakula and Zhang  built on the techniques of \cite{Spakula:2017aa} (and introduced some others) to prove Theorem \ref{st the} in the form stated above.  Property A is known to be implied by FDC \cite[Theorem 4.6]{Guentner:2013aa}, so the version of Theorem \ref{c*-alg the} with property A as a hypothesis is a priori stronger than the version with FDC.  Note though that it remains an open problem whether property A implies FDC.
\end{remark}

We end the paper with the following `rigidity' corollary of Theorem \ref{c*-alg the}.

\begin{corollary}\label{rig the}
Say $X$ and $Y$ are bounded geometry metric spaces.  Then the following are equivalent:
\begin{enumerate}[(i)]
\itemsep-.1cm 
\item there is a bijective coarse equivalence between $X$ and $Y$;
\item the coarse groupoids associated to $X$ and $Y$ (see \cite{Skandalis:2002ng} or \cite[Chapter 10]{Roe:2003rw}) are isomorphic;
\item there is a $^*$-isomorphism from $C^*_u(X)$ to $C^*_u(Y)$ that takes $\ell^\infty(X)$ to $\ell^\infty(Y)$.
\end{enumerate}
Moreover, if $X$ has property A, then these statements are equivalent to
\begin{enumerate}
\item[(iv)] there is a $^*$-isomorphism from $C^*_u(X)$ to $C^*_u(Y)$.
\end{enumerate}
\end{corollary}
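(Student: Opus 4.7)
The implication (iii)$\Rightarrow$(iv) is trivial; the substantive content lies in (i)$\Leftrightarrow$(iii), (i)$\Leftrightarrow$(ii), and (iv)$\Rightarrow$(iii) under property A.

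For (i)$\Rightarrow$(iii), a bijective coarse equivalence $f: X \to Y$ gives a unitary $u_f: \ell^2(X) \to \ell^2(Y)$, $\delta_x \mapsto \delta_{f(x)}$. Conjugation by $u_f$ manifestly sends $\ell^\infty(X)$ onto $\ell^\infty(Y)$, and sends an operator of propagation at most $s$ to one of propagation at most $\sup\{d_Y(f(x),f(y)) : d_X(x,y)\leq s\}<\infty$ by uniform expansiveness of $f$; the corresponding estimate for a coarse inverse of $f$ shows $u_f^*$ conjugates $\C_u[Y]$ into $\C_u[X]$, and we obtain the $^*$-isomorphism by taking norm closures. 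For (iii)$\Rightarrow$(i), I would apply Theorem \ref{cart coarse prop}: a $^*$-isomorphism $\phi: C^*_u(X)\to C^*_u(Y)$ with $\phi(\ell^\infty(X))=\ell^\infty(Y)$ restricts to a bijection $h: X\to Y$ between the minimal projections of the two Cartans, and because $\phi$ preserves the entourages $E_{a,\epsilon}$ of Definition \ref{cart coarse def}, the map $h$ identifies the recovered coarse structures $\mathcal{E}_{C^*_u(X)}$ and $\mathcal{E}_{C^*_u(Y)}$, which by Theorem \ref{cart coarse prop} agree with the given metric coarse structures of $X$ and $Y$, yielding a bijective coarse equivalence.

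For (i)$\Leftrightarrow$(ii), I would appeal to the Skandalis-Tu-Yu description of the coarse groupoid $G(X)$, whose unit space is $\beta X$ (with $X$ sitting as the dense set of isolated points) and whose arrows come from closures $\overline{E}\subseteq \beta(X\times X)$ of the entourages $E\in\mathcal{E}$. A bijective coarse equivalence $f$ extends uniquely to a homeomorphism $\beta X\to\beta Y$ which sends entourages to entourages, producing a groupoid isomorphism; conversely, any groupoid isomorphism restricts to a homeomorphism of unit spaces, hence to a bijection between the isolated points, and preservation of entourages forces this bijection to be a coarse equivalence.

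The implication (iv)$\Rightarrow$(iii) is where Theorem \ref{c*-alg the} is put to use. Given a $^*$-isomorphism $\phi: C^*_u(X)\to C^*_u(Y)$, let $B := \phi(\ell^\infty(X))$. The first task is to verify that $B$ is a Roe Cartan in the sense of Definition \ref{roe cartan def}: the containment of the compacts as an essential ideal and unitality of $C^*_u(Y)$ are automatic; the isomorphism $B\cong \ell^\infty(\N)$ holds because $X$ is countable; and co-separability of $\ell^\infty(X)$ in $C^*_u(X)$ follows from Lemma \ref{u roe struc} applied to the countable family of entourages $\{(x,y) : d_X(x,y)\leq n\}$, each chopped into finitely many single-diagonal pieces via Lemma \ref{greedy}, so $B$ inherits co-separability in $C^*_u(Y)$. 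Since property A is equivalent to nuclearity of the uniform Roe algebra, $Y$ also has property A, and Theorem \ref{c*-alg the} produces a unitary $u\in C^*_u(Y)$ with $uBu^*=\ell^\infty(Y)$; then $\text{Ad}(u)\circ\phi$ is the desired $^*$-isomorphism. The main obstacle has been absorbed into Theorem \ref{c*-alg the} itself; what remains for this corollary is primarily the hypothesis check.
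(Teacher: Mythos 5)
Your proposal is correct. For the substantive implication (iv)$\Rightarrow$(iii) you argue exactly as the paper does: pass to the image Cartan $B=\phi(\ell^\infty(X))$, verify the Roe Cartan hypotheses (the paper leaves the co-separability check implicit, whereas you spell it out via Lemmas \ref{greedy} and \ref{u roe struc} -- a worthwhile addition), transfer property A to $Y$, and invoke Theorem \ref{c*-alg the}; the only cosmetic difference is that you get property A for $Y$ from nuclearity being an isomorphism invariant, while the paper cites rigidity of property A spaces plus coarse invariance (both routes are used elsewhere in the paper, cf.\ Lemma \ref{y fdc}). Where you genuinely diverge is (iii)$\Rightarrow$(i): the paper gives a direct, self-contained argument -- it extracts the bijection from minimal projections and then shows uniform expansiveness by contradiction, building a single-diagonal operator supported on a sequence of bad pairs and observing that its image cannot lie in $C^*_u(Y)$ -- whereas you route the implication through the coarse-structure recovery machinery of Theorem \ref{cart coarse prop}, noting that the induced bijection $h$ intertwines the entourages $E_{a,\epsilon}$ of Definition \ref{cart coarse def} and hence the recovered coarse structures, which coincide with the metric ones. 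Your version is shorter given the machinery already in place and makes the functoriality of the $\Psi$ construction explicit; the paper's version has the minor virtue of not depending on Section \ref{abs cart sec} for this particular implication. Both are valid, and your sketch of (i)$\Leftrightarrow$(ii) fills in detail the paper simply declares straightforward.
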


The equivalence of (i), (ii), and (iii) in the above is fairly well-known: it seems to have been observed independently by several people.  We are not sure if it has explicitly appeared in the literature before: see \cite[Theorem 8.1]{Braga:2018dz} for a closely related, and overlapping, result.  The content of the corollary is the equivalence of these with (iv) when $X$ has property A.  

\begin{proof}[Proof of Corollary \ref{rig the}]
The fact that (i) implies (ii) implies (iii) is straightforward.  The implication (iii) implies (i) follows as such a $^*$-isomorphism induces a bijection between the minimal projections in $\ell^\infty(X)$ and those in $\ell^\infty(Y)$, and thus a bijection $f:X\to Y$.  We claim that $f$ is uniformly expansive in the sense of Definition \ref{bg def}.  Indeed, if not, then there is $r>0$ and a sequence $\big((x^{(n)}_1,x^{(n)}_2)\big)_{n=1}^\infty$ of pairs in $X\times X$ such that $d_X(x^{(n)}_1,x^{(n)}_2)\leq r$ for all $n$, but such that $d_Y(f(x^{(n)}_1),f(x^{(n)}_2))\to\infty$ as $n\to\infty$.  Passing to a subsequence and using bounded geometry, we may assume that no point of $X$ appears twice in the set $\{x^{(n)}_1,x^{(n)}_2\mid n\in \N\}$.  Now, consider the $X\times X$ matrix defined by the condition that $a_{x^{(n)}_1x^{(n)}_2}=1$ for all $n$, and all other matrix entries zero.  Our assumptions that no element appears twice in $\{x^{(n)}_1,x^{(n)}_2\mid n\in \N\}$ implies that this matrix is supported on a single diagonal, and thus defined a bounded operator $a$ on $\ell^2(X)$ by Lemma \ref{linf normal}.  Moreover, the fact that $d(x^{(n)}_1,x^{(n)}_2)\leq r$ for all $n$ implies that $a$ is in $\C_u[X]$.  On the other hand, our isomorphism takes $a$ to an operator in $C^*_u(Y)$ whose $(f(x^{(n)}_1),f(x^{(n)}_2))^\text{th}$ matrix entry is one for all $n$.  The assumption that $d_Y(f(x^{(n)}_1),f(x^{(n)}_2))\to\infty$ implies that this is impossible, however.  A precisely analogous argument now shows that $f^{-1}$ is also uniformly expansive, so $f$ is a coarse equivalence as required.

As (iii) implies (iv) is trivial, to complete the proof it suffices to prove (iv) implies (iii).  Assume that $C^*_u(X)$ and $C^*_u(Y)$ are $^*$-isomorphic.  As in \cite[Lemma 3.1]{Spakula:2011bs}, there is a unitary isomorphism $v:\ell^2(X)\to \ell^2(Y)$ such that $vC^*_u(X)v^*=C^*_u(Y)$.  Let $B=v\ell^\infty(X)v^*$, which satisfies the assumptions of Theorem \ref{c*-alg the}; this needs that $Y$ has property A, which follows from \cite[Theorem 1.4]{Spakula:2011bs} and the fact that property A is a coarse invariant.  Hence there is $u\in C^*_u(Y)$ with $uBu^*=\ell^\infty(Y)$.  Now, we have that $uv\ell^\infty(X)v^*u^*=\ell^\infty(Y)$.  Then $\mathrm{ad}(uv)$ is an isomorphism from $C^*_u(X)$ onto $C^*_u(Y)$ mapping $\ell^\infty(X)$ onto $\ell^\infty(Y)$. \end{proof}

\begin{remark}\label{a vs ce}
Using Corollary \ref{IntroD}, one could replace the assumption of property A in the above by the assumption that $X$ coarsely embeds into Hilbert space, and is non-amenable.  The proof is essentially the same.  Note that for groups, amenability implies property A (which implies coarse embeddability into Hilbert space).  Therefore if $X$ is restricted to the class of groups, we can replace property A in Theorem \ref{rig the} by coarse embeddability into Hilbert space with no (non-)amenability assumption.  However, for general bounded geometry metric spaces, amenability does not imply property A (or even coarse embeddability into Hilbert space), so we cannot get away with this in general.  
\end{remark}


\begin{thebibliography}{10}

\bibitem{Arzhantseva:2008bv}
G.~Arzhantseva and T.~Delzant.
\newblock Examples of random groups.
\newblock Available on the authors' websites, 2008.

%\bibitem{Arzhantseva:2011vn}
%G.~Arzhantseva, E.~Guentner, and J.~\v{S}pakula.
%\newblock Coarse non-amenability and coarse embeddings.
%\newblock {\em Geom. Funct. Anal.}, 22:22--36, 2012.

\bibitem{Barlak:2017aa}
S.~Barlak and X.~Li.
\newblock Cartan subalgebras and the {UCT} problem.
\newblock {\em Adv. Math.}, 316:748--769, 2017.

\bibitem{Barlak:2017ab}
S.~Barlak and S.~Raum.
\newblock Cartan subalgebras in dimension drop algebras.
\newblock arXiv:1712.01957, 2017.

\bibitem{Blackadar:1990aa}
B.~Blackadar.
\newblock Symmetries of the {CAR} algebra.
\newblock {\em Ann. of Math.}, 131(3):589--623, 1990.

\bibitem{Block:1992qp}
J.~Block and S.~Weinberger.
\newblock Aperiodic tilings, positive scalar curvature and amenability of
  spaces.
\newblock {\em J. Amer. Math. Soc.}, 5(4):907--918, 1992.

\bibitem{Braga:2018dz}
B.~Braga and I.~Farah.
\newblock On the rigidity of uniform {R}oe algebras over uniformly locally
  finite coarse spaces.
\newblock arXiv:1805.04236, 2018.

\bibitem{Braga:2019wv}
B.~Braga, I.~Farah and A. Vignati.
\newblock Embeddings of uniform {R}oe algebras
\newblock arXiv:1904.07291, 2019.

\bibitem{Brown:2008qy}
N.~Brown and N.~Ozawa.
\newblock {\em ${C}^*$-Algebras and Finite-Dimensional Approximations},
  volume~88 of {\em Graduate Studies in Mathematics}.
\newblock American Mathematical Society, 2008.

\bibitem{Carlsen:2017aa}
T.~M. Carlsen, E.~Ruiz, A.~Sims, and M.~Tomforde.
\newblock Reconstruction of groupoids and ${C^*}$-rigidity of dynamical
  systems.
\newblock arXiv:1711.01052, 2017.

\bibitem{Chen:2008so}
X.~Chen, R.~Tessera, X.~Wang, and G.~Yu.
\newblock Metric sparsification and operator norm localization.
\newblock {\em Adv. Math.}, 218(5):1496--1511, 2008.

\bibitem{Chifan:2013ji}
I.~Chifan and T.~Sinclair.
\newblock On the structural theory of ${II}_1$ factors of negatively curved
  groups.
\newblock {\em Ann. Sci. \'{E}c. Norm. Sup\'{e}r. (4)}, 46(1):1--33, 2013.

\bibitem{Connes:1981aa}
A.~Connes, J.~Feldman, and B.~Weiss.
\newblock An amenable equivalence relation is generated by a single
  transformation.
\newblock {\em Ergodic Theory Dynam. Systems}, 1(4):431--450, 1981.

\bibitem{Dixmier:1977vl}
J.~Dixmier.
\newblock {\em ${C^*}$-Algebras}.
\newblock North Holland Publishing Company, 1977.

\bibitem{Engel:2014tx}
A.~Engel.
\newblock {\em Indices of pseudodifferential operators on open manifolds}.
\newblock PhD thesis, University of Augsburg, 2014.

\bibitem{Feldman:1977tx}
J.~Feldman and C.~Moore.
\newblock Ergodic equivalence relations, cohomologies, von {N}eumann algebras.
\newblock {\em Trans. Amer. Math. Soc.}, 234:325--359, 1977.

\bibitem{Feldman:1977rd}
J.~Feldman and C.~Moore.
\newblock Ergodic equivalence relations, cohomologies, von {N}eumann algebras.
  {II}.
\newblock {\em Trans. Amer. Math. Soc.}, 234(2):325--359, 1977.

\bibitem{FS}
M.~Finn-Sell.
\newblock Fibred coarse embeddings, a-T-menability and the coarse analogue of the
Novikov conjecture. 
\newblock {\em J. Funct. Anal.}, 267(10):3758--3782, 2014.

\bibitem{Gromov:2003gf}
M.~Gromov.
\newblock Random walks in random groups.
\newblock {\em Geom. Funct. Anal.}, 13(1):73--146, 2003.

\bibitem{Guentner:2005xr}
E.~Guentner, N.~Higson, and S.~Weinberger.
\newblock The {N}ovikov conjecture for linear groups.
\newblock {\em Publ. Math. Inst. Hautes \'{E}tudes Sci.}, 101:243--268, 2005.

\bibitem{Guentner:2009tg}
E.~Guentner, R.~Tessera, and G.~Yu.
\newblock A notion of geometric complexity and its application to topological
  rigidity.
\newblock {\em Invent. Math.}, 189(2):315--357, 2012.

\bibitem{Guentner:2013aa}
E.~Guentner, R.~Tessera, and G.~Yu.
\newblock Discrete groups with finite decomposition complexity.
\newblock {\em Groups, Geometry and Dynamics}, 7(2):377--402, 2013.

\bibitem{Higson:2002la}
N.~Higson, V.~Lafforgue, and G.~Skandalis.
\newblock Counterexamples to the {B}aum-{C}onnes conjecture.
\newblock {\em Geom. Funct. Anal.}, 12:330--354, 2002.

\bibitem{Higson:1995fv}
N.~Higson and J.~Roe.
\newblock On the coarse {B}aum-{C}onnes conjecture.
\newblock {\em London Mathematical Society Lecture Notes}, 227:227--254, 1995.

\bibitem{Higson:2000dp}
N.~Higson and J.~Roe.
\newblock Amenable group actions and the {N}ovikov conjecture.
\newblock {\em J. Reine Angew. Math.}, 519:143--153, 200.

\bibitem{Kirchberg:1999ss}
E.~Kirchberg and A.~S.~Wassermann.
\newblock Permanence properties of ${C}^*$-exact groups.
\newblock {\em Doc. Math.}, 4:513--558, 1999.

\bibitem{Kumjian:1986aa}
A.~Kumjian.
\newblock On {$C^*$}-diagonals.
\newblock {\em Canad. J. Math.}, 38(4):969--1008, 1986.

\bibitem{Li:2017aa}
X.~Li and J.~Renault.
\newblock Cartan subalgebras in ${C^*}$-algebras. {E}xistence and uniqueness.
\newblock {\em Trans. Amer. Math. Soc.}, to appear.
\newblock arXiv:1703.10505, 2017.

\bibitem{Osajda:2014ys}
D.~Osajda.
\newblock Small cancellation labellings of some infinite graphs and
  applications.
\newblock arXiv:1406.5015, 2014.

\bibitem{Ozawa:2000th}
N.~Ozawa.
\newblock Amenable actions and exactness for discrete groups.
\newblock {\em C. R. Acad. Sci. Paris S{\'e}r. I Math.}, 330:691--695, 2000.

\bibitem{Ozawa:2010sz}
N.~Ozawa and S.~Popa.
\newblock On a class of ${II_1}$ factors with at most one {C}artan subalgebra.
\newblock {\em Ann. of Math.}, 172(1):713--749, 2010.

\bibitem{Ozawa:2010el}
N.~Ozawa and S.~Popa.
\newblock On a class of ${II_1}$ factors with at most one {C}artan subalgebra
  {II}.
\newblock {\em Amer. J. Math}, 132(3):841--866, 2010.

\bibitem{Popa:2014jq}
S.~Popa and S.~Vaes.
\newblock Unique {C}artan decomposition for ${II}_1$ factors airising from
  arbitrary actions of free groups.
\newblock {\em Acta Math.}, 212(1):141--198, 2014.

\bibitem{Popa:2014cu}
S.~Popa and S.~Vaes.
\newblock Unique {C}artan decomposition for ${II}_1$ factors airising from
  arbitrary actions of hyperbolic groups.
\newblock {\em J. Reine Angew. Math.}, 694:215--239, 2014.

\bibitem{Renault:2008if}
J.~Renault.
\newblock Cartan subalgebras in ${C^*}$-algebras.
\newblock {\em Irish Math. Soc. Bulletin}, 61:29--63, 2008.

\bibitem{Roe:1993lq}
J.~Roe.
\newblock Coarse cohomology and index theory on complete {R}iemannian
  manifolds.
\newblock {\em Mem. Amer. Math. Soc.}, 104(497), 1993.

\bibitem{Roe:1996dn}
J.~Roe.
\newblock {\em Index Theory, Coarse Geometry and Topology of Manifolds},
  volume~90 of {\em CBMS Conference Proceedings}.
\newblock American Mathematical Society, 1996.

\bibitem{Roe:2003rw}
J.~Roe.
\newblock {\em Lectures on Coarse Geometry}, volume~31 of {\em University
  Lecture Series}.
\newblock American Mathematical Society, 2003.

\bibitem{Roe:2013rt}
J.~Roe and R.~Willett.
\newblock Ghostbusting and property {A}.
\newblock {\em J. Funct. Anal.}, 266(3):1674--1684, 2014.

\bibitem{Sako:2012kx}
H.~Sako.
\newblock Translation ${C}^*$-algebras and property {A} for uniformly locally
  finite spaces.
\newblock arXiv:1213.5900v1, 2012.

\bibitem{Sako:2012fk}
H.~Sako.
\newblock Property {A} and the operator norm localization property for discrete
  metric spaces.
\newblock {\em J. Reine Angew. Math.}, 690:207--216, 2014.

\bibitem{Skandalis:2002ng}
G.~Skandalis, J.-L. Tu, and G.~Yu.
\newblock The coarse {B}aum-{C}onnes conjecture and groupoids.
\newblock {\em Topology}, 41:807--834, 2002.

\bibitem{Vaes:2010aa}
S.~Vaes.
\newblock Rigidity for von {N}eumann algebras and their invariants.
\newblock In {\em Proceedings of the International Congress of Mathematicians},
  volume III, pages 1624--1650, New Delhi, 2010. Hindustan Book Agency.

\bibitem{Vershik:1971aa}
A.~Vershik.
\newblock Nonmeasurable decompositions, orbit theory, algebras of operators.
\newblock {\em Dokl. Akad. Nauk.}, 199:1004--1007, 1971.

\bibitem{Voiculescu:1996ux}
D.~Voiculescu.
\newblock The analogues of entropy and of {F}isher's measure in free
  probability {III}. {T}he absence of {C}artan subalgebras.
\newblock {\em Geom. Funct. Anal.}, 6(1):172--199, 1996.

\bibitem{Spakula:2009tg}
J.~\v{S}pakula.
\newblock Uniform ${K}$-homology theory.
\newblock {\em J. Funct. Anal.}, 257(1):88--121, 2009.

\bibitem{Spakula:2017aa}
J.~\v{S}pakula and A.~Tikuisis.
\newblock Relative commutant picture of {R}oe algebras.
\newblock {\em Comm. Math. Phys.}, 365(3):1019--1048, 2019.

\bibitem{Spakula:2011bs}
J.~\v{S}pakula and R.~Willett.
\newblock On rigidity of {R}oe algebras.
\newblock {\em Adv. Math.}, 249:289--310, 2013.

\bibitem{Spakula:2018aa}
J.~\v{S}pakula and J.~Zhang.
\newblock Quasi-locality and property {A}.
\newblock arXiv:1809.00532, 2018.

\bibitem{Whyte:1999uq}
K.~Whyte.
\newblock Amenability, bi-{L}ipschitz equivalence and the von {N}eumann
  conjecture.
\newblock {\em Duke Math. J.}, 99(1):93--112, 1999.

\bibitem{Willett:2009rt}
R.~Willett.
\newblock Some notes on property {A}.
\newblock In G.~Arzhantseva and A.~Valette, editors, {\em Limits of Graphs in
  Group Theory and Computer Science}, pages 191--281. EPFL press, 2009.

\bibitem{Willett:2010ud}
R.~Willett and G.~Yu.
\newblock Higher index theory for certain expanders and {G}romov monster groups
  {I}.
\newblock {\em Adv. Math.}, 229(3):1380--1416, 2012.

\bibitem{Yu:1995bv}
G.~Yu.
\newblock Coarse {B}aum-{C}onnes conjecture.
\newblock {\em ${K}$-theory}, 9:199--221, 1995.

\bibitem{Yu:200ve}
G.~Yu.
\newblock The coarse {B}aum-{C}onnes conjecture for spaces which admit a
  uniform embedding into {H}ilbert space.
\newblock {\em Invent. Math.}, 139(1):201--240, 2000.

\end{thebibliography}
\end{document}